\newtheorem{theorem}{Theorem}[section]
\newtheorem{lemma}[theorem]{Lemma}
\newtheorem{propo}[theorem]{Proposition}
\theoremstyle{plain}
\theoremstyle{plain}
\theoremstyle{definition}
\newtheorem{defn}[theorem]{Definition}
\theoremstyle{remark}
\newtheorem{rem}[theorem]{Remark}
\theoremstyle{definition}
\newtheorem{example}[theorem]{Example}
\renewenvironment{equation}{\medskip\noindent\refstepcounter{subsection}\makebox[0pt][l]{({\bf\thesubsection})}\begin{minipage}[b]{\textwidth}$$}{$$\end{minipage}\medskip\noindent}
\renewenvironment{split}{\begin{array}[c]{r}}{\end{array}}
\newcommand{\D}{{\mathcal D}}
\newcommand{\T}{T^{*}\mathbb{P}^{1}}
\newcommand{\Hom}{{\rm Hom}}
\newcommand{\Ext}{{\rm Ext}}
\newcommand{\End}{{\rm End}}
\newcommand{\Homg}{{\rm Hom_{G}}}
\newcommand{\Extg}{{\rm Ext}_{G}}
\newcommand{\E}{{\mathcal E}}
\newcommand{\F}{{\mathcal F}}
\newcommand{\G}{{\mathcal G}}
\newcommand{\Coh}{{\rm Coh}}
\newcommand{\bull}{^{\mbox{\tiny{$\bullet$}}}}
\begin{document}
\author{Christopher Brav}
\title{The projective McKay correspondence}
\maketitle
\begin{abstract}
Kirillov \cite{kir} has described a McKay correspondence for finite subgroups of $PSL_{2}({\mathbb C})$ that
associates to each `height' function an affine Dynkin quiver together with a derived equivalence between equivariant sheaves on $\mathbb{P}^{1}$ and representations of this quiver. The equivalences for different height functions are then related by reflection functors for quiver representations. 

The main goal of this paper is to develop an analogous story for the cotangent bundle of $\mathbb{P}^1$. We show that each height function gives rise to a derived equivalence between equivariant sheaves on the cotangent bundle $\T$ and modules over the preprojective algebra of an affine Dynkin quiver. These different equivalences are related by spherical twists, which take the place of the reflection functors for $\mathbb{P}^{1}$. 
\end{abstract}

\section{Introduction}\label{sectintroduction}

In \cite{mck}, John McKay associated to a finite subgroup $G \subset SL_{2}(\mathbb{C})$ a graph $\Gamma$ in which the set of vertices I is labeled by the irreducible representations $W_{i}$, $i \in I$ of $G$ and the number of edges $n_{ij}$ between two irreducible representations $W_{i}, W_{j}$ is given by $n_{ij}={\rm dim}\,\Homg(W_{i},V\otimes W_{j})$, where $V$ is the standard two dimensional representation of $G$ coming from its embedding in $SL_{2}(\mathbb{C})$. McKay then observed that the graph $\Gamma$ is an affine Dynkin diagram of type $A$,$D$, or $E$.

As we recall in Section~\ref{affinecase}, this relation between the representation theory of finite subgroups of $SL_{2}(\mathbb{C})$ and affine Dynkin diagrams has a description in terms of $G$-equivariant sheaves on $\mathbb{C}^{2}$. More precisely, there is an equivalence  

\begin{equation}\label{affinecorrespondence}
{\rm Coh}_{G}(\mathbb{C}^{2}) \simeq  \Pi_{\Gamma}\mbox{-mod}
\end{equation}
between the category of $G$-equivariant coherent sheaves on $\mathbb{C}^{2}$ and the category of finitely generated modules
over the preprojective algebra $\Pi_{\Gamma}$. For the purposes of this paper we shall refer to this equivalence as the McKay correspondence for $\mathbb{C}^{2}$. 

More geometrically, Kapranov-Vasserot \cite{kv}, building on work of Gonzalez-Sprinberg-Verdier \cite{verdier}, construct a derived
equivalence $$D^{b}_{G}(\mathbb{C}^{2}) \simeq D^{b}(\widehat{X})$$
where $\widehat{X} \rightarrow \mathbb{C}^{2}/\!\!/ G$ is the minimal resolution of the `Kleinian singularity'  $\mathbb{C}^{2}/\!\!/ G$.
It is this equivalence that usually goes under the name `McKay correspondence'. 

\medskip

In another direction, Kirillov \cite{kir} has described a projective McKay correspondence for finite subgroups $\widetilde{G}$ of $PSL_{2}(\mathbb{C})$. Letting $\Gamma$ be the graph associated by McKay to the double cover $G \subset SL_{2}(\mathbb{C})$ of $\widetilde{G}$, this projective correspondence relates equivariant sheaves
on $\mathbb{P}^1$ to representations of the path algebra of a quiver with underlying graph $\Gamma$. More precisely, to each `height' function 
$$h: \Gamma \rightarrow \mathbb{Z}$$ 
on the vertices of $\Gamma$ (defined in Section~\ref{sectprojmckay}), Kirillov associates a quiver $Q_{h}$ on $\Gamma$ and an exact equivalence

\begin{equation*}
\xymatrix{
D^{b}_{\widetilde{G}}(\mathbb{P}^1) \ar[rr]^{R\Phi_{h}} && D^{b}(Q_{h})
} 
\end{equation*}
where $D^{b}_{\widetilde{G}}(\mathbb{P}^1)$ is the bounded derived category of $\widetilde{G}$-equivariant coherent sheaves on $\mathbb{P}^{1}$ and $D^{b}(Q_{h})$ is the bounded derived category of representations of $Q_{h}$. Furthermore, the equivalences
for different height functions $h$ and $\tilde{h}$ differ by a sequence of the reflection functors of Bernstein-Gelfand-Ponamarev 
\cite{bgp} in the sense that there is a commutative diagram of equivalences

\begin{equation*}
\xymatrix{& D^{b}_{\widetilde{G}}(\mathbb{P}^{1}) \ar[dl]_{R\Phi_{h}} \ar[dr]^{R\Phi_{\tilde{h}}}\\
D^{b}(Q_{h}) \ar[rr]^{BGP} && D^{b}(Q_{\tilde{h}}).
}
\end{equation*}
It is well-known that the Grothendieck groups of the various quivers $Q_{h}$ can be identified with the affine root lattice associated to the diagram $\Gamma$, and that under this identification, the reflection functors generate the action of the affine Weyl group.

\medskip

The main goal of this paper is to develop an analogous story for the cotangent bundle $\T$. Theorem~\ref{cotan}, together with Proposition~\ref{cokoszul}, gives for each height function $h$ an equivalence 
\begin{equation*}
\xymatrix{
D^{b}_{\widetilde{G}}(\T) \ar[rr]^{R\Psi_{h}} && D^{b}(\Pi_{\Gamma}), 
}
\end{equation*}
where $\Pi_{\Gamma}$ is the preprojective algebra of the diagram $\Gamma$. 

In order to relate the various equivalences $R\Psi_{h}$, we consider for each $h$ a `$\Gamma$-configuration' of spherical objects 
$\E_{i}^{h}$, $i \in I$, together with the associated spherical twists of Seidel-Thomas \cite{st} which act as autoequivalences on the derived category. Just as the equivalences $R\Phi_{h}$ in the $\mathbb{P}^{1}$-case differed by reflection functors, Theorem~\ref{twist} explains how the equivalences $R\Psi_{h}$ differ by spherical twists.

To make the analogy between spherical twists and reflection functors more precise, Proposition~\ref{projtilt} reinterprets 
the latter purely in terms of $D^{b}_{\widetilde{G}}(\mathbb{P}^{1})$. Under the inverse equivalence $R\Phi_{h}^{-1}$, the heart of the standard $t$-structure on $D^{b}(Q_{h})$ is sent to a heart $\mathcal{A}_{h} \subset D^{b}_{\widetilde{G}}(\mathbb{P}^1)$ with simple objects $E_{i}^{h}$, $i \in I$. In terms of the hearts $\mathcal{A}_{h}$, the reflection functors amount to tilting at the simple objects $E_{i}^{h}$ in the sense of Happel-Reiten-Smal\o  \cite{hrs}.

Similarly, under the inverse equivalence $R\Psi_{h}^{-1}$, the standard $t$-structure on $D^{b}(\Pi_{\Gamma})$ gives a non-standard $t$-structure on $D^{b}_{\widetilde{G}}(\T)$. Restricting this $t$-structure to the subcategory $\D \subset D^{b}_{\widetilde{G}}(\T)$ of objects supported along the zero section gives a heart $\mathcal{B}_{h} \subset \D$ whose simple objects are the spherical objects $\E_{i}^{h}$ that we have already encountered. Proposition~\ref{twisttilt} shows how the action of the spherical twists can be described
in terms of tilting at the simple objects $\E_{i}^{h}$. 

Note that, although the spherical twists are indeed the right analogues of the reflection functors, the 
situation for $\T$ is richer than for $\mathbb{P}^{1}$, since the spherical twists actually act by {\it autoequivalences}
on the category $\D$, while the reflection functors are derived equivalences between categories of {\it different} quivers and the effect of the reflection functors on $D^{b}_{\widetilde{G}}(\mathbb{P}^{1})$ is merely to tilt $t$-structures.

Completing the analogy between $\mathbb{P}^{1}$ and $\T$, let us note that there is an isomorphism $K_{0}(D^{b}_{\widetilde{G}}(\mathbb{P}^{1})) \simeq K_{0}(\D)$ sending the class of $E_{i}^{h}$ to the class of $\E_{i}^{h}$, that these collections form bases of simple roots
for $K_{0}(D^{b}_{\widetilde{G}}(\mathbb{P}^{1})) \simeq K_{0}(\D)$ thought of as the affine root lattice, and that the spherical twists generate an action of a braid group $B_{\Gamma}$ on $\D$, which induces the action of the affine Weyl group on $K_{0}(D^{b}_{\widetilde{G}}(\mathbb{P}^{1})) \simeq K_{0}(\D)$ agreeing with that coming from the reflection functors.

\medskip
We summarize the relation between $\mathbb{P}^{1}$ and $\T$ in the following table.
\medskip
%%%TABLE
\begin{center}
\begin{tabular}{c|c}
$\mathbb{P}^{1}$ & $\T$ \\ 
\hline \hline \\
$R\Phi_{h}: D^{b}_{\widetilde{G}}(\mathbb{P}^{1}) \simeq D^{b}(Q_{h})$ & $R\Psi_{h}: D^{b}_{\widetilde{G}}(\T) \simeq D^{b}(\Pi_{\Gamma})$ \\\\\hline \\
Hearts $\mathcal{A}_{h} \subset  D^{b}_{\widetilde{G}}(\mathbb{P}^{1}) $ & Hearts $\mathcal{B}_{h} \subset \D \subset D^{b}_{\widetilde{G}}(\T)$\\\\
\hline \\
Simples $E_{i}^{h}$ & Spherical objects $\E_{i}^{h}$ \\\\
\hline \\ 
Reflection functors & Spherical twists\\\\
\hline \\
Affine Weyl group & Braid group \\\\
\hline
\end{tabular}
\end{center}

\medskip

Furthermore, together with the equivalences $\Pi_{\Gamma}\mbox{-mod} \simeq \Coh_{G}(\mathbb{C}^{2})$ and $D^{b}_{G}(\mathbb{C}^{2}) \simeq D^{b}(\widehat{X})$ for the resolution $\widehat{X} \rightarrow  \mathbb{C}^{2}/\!\!/G$, our results provide a chain of equivalences
$$D^{b}_{\widetilde{G}}(\T) \simeq D^{b}(\Pi_{\Gamma}) \simeq D^{b}_{G}(\mathbb{C}^{2}) \simeq D^{b}(\widehat{X}).$$
We may thus view $\T$ as providing a bridge between the McKay correspondence for $\mathbb{P}^{1}$ of \cite{kir} and the usual McKay correspondence for $\mathbb{C}^{2}$. 

\begin{rem}
Let us point out that the structures appearing in the above table are very similar to those in Bridgeland's paper \cite{tstruc}, in which
exceptional collections on certain Fano varieties are related to collections of spherical objects on canonical bundles. 
Since the combinatorics of affine $ADE$ diagrams with varying orientation is more complicated than that of the one-way oriented $A$ diagrams appearing in theory of exceptional collections, finding analogies for all of the fine results in \cite{tstruc} requires further work.
\end{rem}

In order to keep our exposition self-contained, we recall in Section~\ref{affinecase} some algebraic aspects of the McKay correspondence for
$\mathbb{C}^{2}$. In fact, in Theorem~\ref{equiv}, we give a geometric construction
of an algebra $B$ and an equivalence $\Coh_{G}(V) \rightarrow B\mbox{-mod}$ for any finite group $G$ acting linearly on a vector space $V$. We then show in Theorem~\ref{koszul} that our algebra $B$ is Koszul and use this to realize $B$ as quotient of the path algebra of the McKay quiver by quadratic relations. When $G \subset SL_{2}(\mathbb{C})$, we note that $B \simeq \Pi_{\Gamma}$, the preprojective algebra of the affine Dynkin diagram $\Gamma$ associated to $G$. Finally, in Lemma~\ref{lemmagammaconfigs} we characterize $\Pi_{\Gamma}$ as the Koszul dual of the $\Ext$-algebra of a collection of spherical objects. These results will be used in later sections
and should be of independent interest.

%SPHERICAL OBJECTS

\section{The affine McKay correspondence and Koszul duality}\label{affinecase}

%In this section we construct an exact equivalence between the categories of $G$-equivariant sheaves on affine space $V$ and modules over a certain algebra $B$. 
%We show that this algebra is Koszul and use this to describe the algebra $B$ as the quotient of the McKay quiver by quadratic relations. 

%When $G \subset SL_{2}(\mathbb{C})$ and $\Gamma$ the affine Dynkin diagram associated to $G$ by McKay \cite{mckay}, we note that the Koszul dual $E(B)$ is the $\Ext$-algebra of the $\Gamma$-configuration of spherical objects indexed by irreducible representations of $G$ introducted by Seidel-Thomas \cite{st} and studied in \cite{klein}. Using Serre duality we compute a presentation for $E(B)$ and the dual presentation for $B$, which turns out to be the preprojective algebra of the diagram $\Gamma$. This example has been much studied in the literature. We include it to illustrate technique and to provide a reference for Section~\ref{???}.

\medskip

Let $G$ be a finite group and $V$ a finite dimensional representation over $\mathbb{C}$. Let  $I$ be an index set for the irreducible representations of $G$, and for $i \in I$, let $W_{i}$ be the corresponding irreducible representation.
We think of $V$  as the total space of a $G$-equivariant vector bundle over a point, with projection $\pi$ and zero-section $s$:
\begin{equation*}
\xymatrix{
V \ar[d]_{\pi}\\
\bull \ar@/^{2pc}/[u]^{\quad}_{s}
}
\end{equation*}
Taking the pull-back $ \pi^{*}W$ of
the equivariant vector bundle $W=\oplus_{i}W_{i}$ on the point, we set 

\begin{equation}\label{thealgebra}
B:=\End_{G}( \pi^{*}W)^{\rm op}\simeq (\pi^{*}\End(W)^{\rm op})^{G}\simeq (S^{\bull}V^{*}\otimes \End(W)^{\rm op})^{G}.
\end{equation}

The following theorem shows that the algebra $B$ encodes everything there is to know about $G$-equivariant coherent sheaves on the affine space $V$.
%%%%%%%%%%%%%%%%%%%%%%%%%%%%%%%%%%%%%%%%
%THEOREM ON GENERAL EQUIVALENCE FOR AFFINE SPACE
%%%%%%%%%%%%%%%%%%%%%%%%%%%%%%%%%%%%%%%%
\begin{theorem}\label{equiv}
There is an equivalence
 $$\Psi=\Hom_{G}(\pi^{*}W, -):\Coh_{G}(V) \longrightarrow B \mbox{{\rm -mod}}$$
from $G$-equivariant coherent sheaves to finitely generated left $B$-modules, where the action on an object $\Psi(\F)$ is given by precomposition
with elements of $B^{\rm op}=\End_{G}( \pi^{*}W)$.
\end{theorem}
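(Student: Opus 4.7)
My plan is to reduce the statement to a Morita-type argument by showing that $\pi^{*}W$ is a finitely generated projective generator of $\Coh_{G}(V)$. Since $V$ is affine, taking global sections identifies $\Coh_{G}(V)$ with the category of finitely generated $G$-equivariant $S$-modules, where $S = S^{\bullet}V^{*}$, and under this identification $\pi^{*}W$ corresponds to the free module $S \otimes W$ with the diagonal $G$-action. A direct calculation using the $(\pi^{*}, \pi_{*})$-adjunction gives $\Psi(\F) \simeq \Hom_{G}(W, \Gamma(V,\F))$, with $B$-action by precomposition in the $\pi^{*}W$-slot.

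The key step is to verify projectivity and generation of $\pi^{*}W$. For projectivity, $\Hom_{G}(W, -)$ decomposes as $\bigoplus_{i \in I}\Hom_{G}(W_{i}, -)$, the direct sum of isotypic multiplicity functors, each of which is exact on $G$-modules because $\mathbb{C}[G]$ is semisimple; composing with the exact functor $\Gamma(V, -)$ shows that $\Psi$ is exact on $\Coh_{G}(V)$. For the generator property, given a coherent equivariant sheaf $\F$, the module $\Gamma(V,\F)$ is finitely generated, so the $G$-span of finitely many generators is a finite-dimensional $G$-stable subspace $E \subseteq \Gamma(V,\F)$ producing a surjection $S \otimes E \twoheadrightarrow \Gamma(V,\F)$. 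Since $W = \bigoplus_{i} W_{i}$ contains every irreducible as a summand, $E$ embeds as a summand of $W^{\oplus n}$ for some $n$, so $\F$ is a quotient of $(\pi^{*}W)^{\oplus n}$. In particular, $\Psi(\F)$ is a quotient of $\Psi((\pi^{*}W)^{\oplus n}) = B^{\oplus n}$ and hence finitely generated.

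To assemble the equivalence, I would construct the candidate inverse $\Phi(N) := \pi^{*}W \otimes_{B} N$, using the natural right $B$-action on $\pi^{*}W$ through $B^{\rm op} = \End_{G}(\pi^{*}W)$. A presentation $B^{\oplus m} \to B^{\oplus n} \to N \to 0$ is carried by the right-exact functor $\Phi$ to $(\pi^{*}W)^{\oplus m} \to (\pi^{*}W)^{\oplus n} \to \Phi(N) \to 0$, so $\Phi$ lands in coherent equivariant sheaves. On $N = B$ and $\F = \pi^{*}W$ the unit $N \to \Psi\Phi(N)$ and counit $\Phi\Psi(\F) \to \F$ are tautological isomorphisms, since $\Psi(\pi^{*}W) = B$ and $\pi^{*}W \otimes_{B} B = \pi^{*}W$. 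Extending to arbitrary finitely generated objects is a standard five-lemma argument applied to the presentations above, using that $\Psi$ is exact and $\Phi$ is right exact.

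I expect the only genuinely delicate point to be bookkeeping for finite generation, namely checking that $\Psi$ lands in finitely generated $B$-modules and that $\Phi$ preserves coherence; both follow from the presentation arguments, together with the basic observation that $B = \Psi(\pi^{*}W)$ is itself finitely generated as a $B$-module.
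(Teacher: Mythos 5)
Your proof is correct, and it rests on the same core idea as the paper's: $\pi^{*}W$ is a projective generator of the equivariant category, so Morita theory applies. The execution differs in one respect worth noting. The paper first passes to the cocomplete category ${\rm QCoh}_{G}(V)$, verifies that $\pi^{*}W$ is small, projective, and a generator there (the generator step going through $\pi_{*}$ to the point and the counit $\pi^{*}\pi_{*}\F \to \F$), quotes the abstract theorem that $\Hom(P,-)$ is then an equivalence onto $\End(P)^{\rm op}$-Mod, and finally restricts to $\Coh_{G}(V)$ and finitely generated modules by exhibiting finite presentations on both sides. You instead stay entirely in the finitely generated world: you identify $\Coh_{G}(V)$ with finitely generated equivariant $S^{\bullet}V^{*}$-modules, prove exactness of $\Psi$ from semisimplicity of $\mathbb{C}[G]$, obtain the surjection $(\pi^{*}W)^{\oplus n}\twoheadrightarrow \F$ from the $G$-span of a finite generating set, and then construct the explicit quasi-inverse $\pi^{*}W\otimes_{B}(-)$, checking unit and counit on free objects and extending by the five lemma. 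What the paper's route buys is that no quasi-inverse needs to be constructed and no five-lemma bookkeeping is required, at the cost of the smallness hypothesis and the separate restriction step; your route is more self-contained and makes the inverse functor explicit (which is in any case what $\Psi^{-1}$ must be), at the cost of the presentation-chasing. Both are complete proofs.
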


\begin{proof}

We use the standard fact that if $\mathcal{A}$ is a cocomplete abelian category 
then the functor $\Hom(P,-): \mathcal{A} \rightarrow \End(P)^{\rm op} \mbox{{\rm -Mod}}$
is an equivalence if and only if $P$ is {\it small} ($\Hom(P,-)$ commutes with sums), {\it projective}, and a {\it generator} of $\mathcal{A}$ (given an object $X \in \mathcal{A}$ there is a surjection $P^{\oplus I} \twoheadrightarrow X$ for some index set $I$). See for instance \cite{morita}.

In our case, we take ${\rm QCoh}_{G}(V)$, the category of $G$-equivariant quasi-coherent sheaves on $V$, as our cocomplete abelian category.
Then the object $\pi^{*}W$ is small (the underlying quasi-coherent sheaf is free of finite rank and taking invariants commutes with sums) and projective (again because the sheaf is free of finite rank and taking invariants is exact).
To see that $\pi^{*}W$ is a generator, note that given any quasi-coherent $G$-sheaf $\mathcal{F}$, we can push-forward to the point to get a $G$-representation $\pi_{*}\mathcal{F}$, which splits as a sum of irreducibles. 
There is thus a surjection $W^{\oplus I} \twoheadrightarrow \pi_{*}\mathcal{F} $, which pulls-back to a surjection $\pi^{*}W^{\oplus I} \twoheadrightarrow \pi^{*}\pi_{*}\mathcal{F}$. Composing with the counit $\pi^{*}\pi_{*}\mathcal{F}  \rightarrow \mathcal{F}$, which is surjective since $\mathcal{F}$
is generated by its global sections, gives a surjection $\pi^{*}W^{\oplus I} \twoheadrightarrow \mathcal{F}$, showing that $\pi^{*}W$ is a generator 
for ${\rm QCoh}_{G}(V)$.  Thus the functor $\Psi=\Hom_{G}(\pi^{*}W, -):{\rm QCoh}_{G}(V) \rightarrow B \mbox{{\rm -Mod}}$ is an equivalence, by the above quoted fact.

Finally, we check that this equivalence restricts to an equivalence between coherent $G$-sheaves $\Coh_{G}(V)$ and finitely generated $B$-modules $B \mbox{{\rm -mod}}$. First note that we can realize every coherent $G$-sheaf $\mathcal{F}$ as
a quotient $\pi^{*}W^{\oplus n} \twoheadrightarrow \mathcal{F}$. We can do this
by first constructing a non-equivariant surjection $\mathcal{O}^{\oplus m} \twoheadrightarrow \mathcal{F}$, inducing to an equivariant surjection $\pi^{*}\mathbb{C}G^{\oplus m} \twoheadrightarrow \bigoplus_{g} \mathcal{F}$, and then
composing a surjection $\pi^{*}W^{\oplus n} \twoheadrightarrow
\pi^{*}\mathbb{C}G^{\oplus m}$ with a surjection $\bigoplus_{g} \mathcal{F} \twoheadrightarrow \mathcal{F}$. Applying the functor $\Psi$ to the surjection $\pi^{*}W^{\oplus n} \twoheadrightarrow \mathcal{F}$ gives a surjection
$B^{\oplus n} \twoheadrightarrow \Psi(\mathcal{F})$, so
$\Psi$ restricts to a functor from $\Coh_{G}(V)$ to 
$B \mbox{{\rm -mod}}$, which we already know to be full and faithful. To see that the restriction of $\Psi$ is essentially surjective, realize a finitely generated $B$-module $M$ as a quotient $B^{\oplus n} \twoheadrightarrow M$ and apply the inverse equivalence $\Psi^{-1}$ to get a surjection $\pi^{*}W^{\oplus n} \twoheadrightarrow \Psi^{-1}(M)$. Then the $G$-sheaf $\mathcal{F}=\Psi^{-1}(M)$ is coherent and $\Psi(\mathcal{F}) \simeq M$,
as desired.

\end{proof}

To understand the algebra $B$, we use some basic facts about Koszul algebras. \medskip

%KOSZUL GENERALITIES

Let $B$ be a graded algebra with semisimple degree zero part $B_{0}$, which we also consider as a $B$-module via $B/B_{\geq 1} \simeq B_{0}$.

\begin{defn} $B$ is called {\it Koszul} if the algebra $$E(B):=\Ext^{\bull}_{B}(B_{0},B_{0})$$
is generated in degree $1$. 
\end{defn}

We assume further that $B$ is {\it  finite}, meaning that each $B_{i}$ is finitely generated as a left and a right $B_{0}$-module, and that $B$ is Noetherian. We summarize the facts that we need in the following theorem, the proofs of which can be found in \cite{bgs}.

\begin{theorem}\label{facts}

\noindent 1.  If $B$ is Koszul, then $B$ is {\it quadratic}, meaning that the natural ring homomorphism $T^{\bull}_{B_{0}}B_{1} \rightarrow B$
is a surjection with kernel generated by a space of relations $R$ in degree $2$. Define the quadratic dual $B^{!}$ to be the algebra with dual generators $B_{1}^{*}$ and dual relations $R^{\perp}$.

Note that the tensor algebra is taken over $B_{0}$.

\medskip

\noindent 2. If $B$ is Koszul, then so are $B^{!}$ and $E(B)$. There are canonical isomorphisms $E(B) \simeq {B^{!}}^{\rm op}$ and  $E(E(B)) \simeq {{({B^{!}}^{\rm op})}^{!}}^{\rm op} \simeq B$.

\medskip

\noindent 3. If $B$ is Koszul, it has finite global dimension.

\medskip

\noindent 4. (Numerical criterion) Assume $B$ is an algebra over a field $F$ and that there is a finite set of orthogonal idempotents $e_{i} \in B_{0}$ such that $B_{0}=\oplus_{i} Fe_{i}$. We can thus form a matrix of Poincar\'e series $$P(B,t)_{i,j}=\sum_{d} t^{d}{\rm dim}_{F} \; e_{i}B_{d}e_{j}.$$  
Since $E(B)_{0}= \Hom_{B_{0}}(B_{0},B_{0}) \simeq B_{0}^{\rm op} \simeq B_{0}$, we can also form the matrix $P(E(B),t)$. 

\medskip
Then $B$ is Koszul if and only if
$$P(B,t)\cdot P(E(B),-t)={\rm Id}.$$
\end{theorem}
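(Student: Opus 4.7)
The organizing principle is the minimal graded projective resolution $P\bull \to B_{0}$ of the graded simple left module $B_{0}$. Write $P^{-i} = B \otimes_{B_{0}} V_{i}$; minimality forces the differential to land in $B_{\geq 1} \otimes_{B_{0}} V_{i-1}$, so $V_{i}$ is concentrated in degrees $\geq i$, and $\Ext^{i}_{B}(B_{0},B_{0})$ is the graded $B_{0}$-dual of $V_{i}$. By induction on $i$, the Koszul hypothesis that $E(B)$ is generated in degree $1$ is equivalent to the statement that each $V_{i}$ is concentrated purely in degree $i$, i.e.\ the resolution is \emph{linear}. Everything below flows from this observation.

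Part 1 is immediate from the first two terms of a linear resolution. The surjection $B \otimes_{B_{0}} V_{1} \twoheadrightarrow B_{\geq 1}$ with $V_{1} = B_{1}$ shows that $B$ is generated by $B_{1}$ over $B_{0}$, giving a surjection $T^{\bull}_{B_{0}} B_{1} \twoheadrightarrow B$. The next term, generated in degree $2$, identifies its kernel with a space $R \subset B_{1} \otimes_{B_{0}} B_{1}$ of quadratic relations, so one may define $B^{!} := T^{\bull}_{B_{0}}(B_{1}^{*})/\langle R^{\perp}\rangle$.

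For part 2 I would build the Koszul complex $K\bull := B \otimes_{B_{0}} (B^{!})^{*}$ with differential assembled from the multiplication in $B$ and the comultiplication dual to multiplication in $B^{!}$, and then show that, under the Koszul hypothesis, $K\bull$ coincides with the minimal resolution. Computing $\Ext^{\bull}_{B}(B_{0},B_{0})$ via this resolution yields the natural isomorphism of algebras $E(B) \simeq (B^{!})^{\rm op}$. The construction is manifestly symmetric in $B$ and $B^{!}$, so running it again proves that $B^{!}$ (hence $E(B)$) is Koszul and gives the biduality $E(E(B)) \simeq B$.

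Parts 3 and 4 are then consequences. Finite global dimension follows once $K\bull$ has finite length, which under the stated finiteness hypotheses is controlled by $B^{!}$. For the numerical criterion, the Euler characteristic of the linear resolution, taken over the idempotents $e_{i}$ and assembled into matrices, produces exactly the identity $P(B,t) \cdot P(E(B),-t) = {\rm Id}$; conversely, this identity forces each $V_{i}$ to be concentrated in degree $i$, and hence $B$ to be Koszul. The main obstacle is part 2: proving that $K\bull$ is actually exact (not merely has the right Euler characteristic) requires a careful induction using the quadratic dual and attention to the $B_{0}$-bimodule structure, and this is the technical heart of the BGS treatment one would need to reproduce.
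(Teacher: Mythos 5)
The paper does not actually prove Theorem~\ref{facts}: it is presented as a summary of standard facts with the proofs explicitly deferred to \cite{bgs}. Your outline follows precisely the route of that reference---Koszulity as linearity of the minimal graded resolution of $B_{0}$, the Koszul complex $B\otimes_{B_{0}}(B^{!})^{*}$ for quadraticity and the duality statements, and the Euler characteristic of the linear resolution (with the standard induction on internal degree for the converse) for the numerical criterion---so it is consistent with the paper's cited proof, with the one substantive step you yourself flag (exactness of the Koszul complex) left to the reference.
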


\medskip \medskip
Let us return to the algebra $B=\End_{G}(\pi^{*}W) \simeq (S^{\bull}V^{*} \otimes \End(W)^{\rm op})^{G}$.

First, I claim that the algebra $B$ is Noetherian. To see this, first note that the endomorphism algebra $\End_{X}(\F)$ of any coherent sheaf $\F$ on a Noetherian scheme $X$ must be Noetherian, since any ascending chain of ideals $I_{1} \subseteq I_{2} \subseteq \cdots$ of $\End_{X}(\F)$ is in particular an ascending chain of submodules of $\End_{X}(\F)$ thought of as a finitely generated module over the Noetherian algebra $A=\mathcal{O}_{X}(X)$ and so must eventually terminate. If $X$ carries the action of a finite group, then by an ancient theorem of Noether $A$ is finite as a module over the ring of invariants $A^{G}$, and so the invariant endomorphisms $\End_{G}(\F)$ are a finitely generated module over $A^{G}$.
Then the same argument on the ascending chain of ideals $I_{1} \subseteq I_{2} \subseteq \cdots$ of $\End_{X}(\F)$ works to show that the algebra 
$\End_{G}(\F)$ is Noetherian.

Next, note that $B_{0}=\oplus_{i}\Homg(W_{i},W_{i}) \simeq \oplus_{i}\mathbb{C}\cdot 1_{W_{i}}$, a commutative semi-simple algebra. Given this property of $B_{0}$ and the fact that $B$ is Noetherian, we may try to apply the numerical criterion to check that our algebra $B$ is Koszul.

To apply the numerical criterion, we need to understand $E(B)$. By the adjunction $\pi^{*} \dashv \pi_{*}$, we see that the image of $s_{*}W$ under the equivalence $\Psi: D^{b}_{G}(V) \simeq {\rm perf}\; B$ is
$$\Hom_{G}(\pi^{*}W, s_{*}W) \simeq \Hom_{G}(W,\pi_{*} s_{*}W) \simeq \oplus_{i} \Hom_{G}(W_{i},W_{i}) \simeq B_{0}$$
and thus we have an isomorphism $E(B) = \Ext^{\bull}_{B}(B_{0},B_{0}) \simeq \Ext_{G}^{\bull}(s_{*}W,s_{*}W)$. The Koszul resolution $0 \rightarrow \pi^{*}\bigwedge^{n}V^{*} \otimes W \rightarrow 
\cdots \rightarrow  \pi^{*}V^{*} \otimes W \rightarrow \pi^{*}W \rightarrow s_{*}W \rightarrow 0$
and the adjunction $Ls^{*} \vdash s_{*}$ then give isomorphisms

\begin{equation}\label{extalg}
E(B) \simeq \Ext_{G}^{\bull}(s_{*}W,s_{*}W) \simeq (\bigwedge^{_{\;\;\bull}}\! V \otimes \End(W))^{G}.
\end{equation}

In order to apply the numerical criterion to our algebra $B$ we need expressions for the Poincar\'e series of the graded vector spaces $(S^{\bull} V^{*} \otimes U)^{G}$ and $(\bigwedge^{\!\! \bull}\! V \otimes U)^{G}$ where $U$ is some $G$-representation. Letting $S_{U}(t)$ be Poincar\'e series of the first and $E_{U}(t)$ the series of the second, Molien's formulae are

\begin{equation}\label{molien}
\begin{split}
S_{U}(t)=\frac{1}{|G|}\sum_{g \in G} \frac{\chi_{_{U}}(g)}{{\rm det}_{V}(1-g^{-1}\cdot t)} \\ \\
E_{U}(t)=\frac{1}{|G|}\sum_{g \in G} {\chi_{_{U}}(g)}{{\rm det}_{V}(1+g\cdot t)}
\end{split}
\end{equation}
A proof of the first formula can be found in \cite[Theorem 3.2.2]{invt}, and the second formula follows similarly.
%%%THE ALGEBRA IS KOSZUL
\begin{theorem}\label{koszul}
The algebra $B=(S^{\bull}V^{*}\otimes \End(W)^{\rm op})^{G}$ is Koszul.
\end{theorem}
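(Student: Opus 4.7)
The plan is to apply the numerical criterion of Theorem~\ref{facts}(4). We have already verified that $B$ is Noetherian and that $B_{0}=\bigoplus_{i\in I}\mathbb{C}\cdot 1_{W_{i}}$ decomposes over the orthogonal idempotents $e_{i}=1_{W_{i}}$, so it suffices to verify the matrix identity $P(B,t)\cdot P(E(B),-t)={\rm Id}$ with rows and columns indexed by $I$.

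First I would identify the matrix entries explicitly. Unpacking the opposite-algebra convention in $B=\End_{G}(\pi^{*}W)^{{\rm op}}$ gives $e_{i}Be_{j}\simeq(S^{\bull}V^{*}\otimes\Hom(W_{i},W_{j}))^{G}$, so that $(P(B,t))_{ij}=S_{W_{i}^{*}\otimes W_{j}}(t)$. Using the identification $E(B)\simeq(\bigwedge^{\bull}V\otimes\End(W))^{G}$ of (\ref{extalg}) with its ordinary composition, the analogous unpacking gives $(P(E(B),-t))_{ij}=E_{W_{j}^{*}\otimes W_{i}}(-t)$. Substituting Molien's formulas (\ref{molien}) turns each matrix-product entry into a double sum over $G\times G$.

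The main calculation is to evaluate $\sum_{k\in I}S_{W_{i}^{*}\otimes W_{k}}(t)\cdot E_{W_{j}^{*}\otimes W_{k}}(-t)$. After interchanging the outer sums over $(g,h)\in G\times G$ with the sum over $k$, the inner sum becomes $\sum_{k}\chi_{W_{k}}(g)\chi_{W_{k}}(h)$, which by the second orthogonality relation for characters equals $|C_{G}(g)|$ when $h$ is conjugate to $g^{-1}$ and vanishes otherwise. On this surviving conjugacy locus the ratio $\det_{V}(1-ht)/\det_{V}(1-g^{-1}t)$ collapses to $1$ since similar matrices share characteristic polynomials, and $\chi_{W_{j}}(h^{-1})$ likewise becomes $\chi_{W_{j}}(g)$. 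Using $|C_{G}(g)|\cdot|[g^{-1}]|=|G|$ reduces the whole expression to $\frac{1}{|G|}\sum_{g}\overline{\chi_{W_{i}}(g)}\chi_{W_{j}}(g)$, which equals $\delta_{ij}$ by the first orthogonality relation.

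The main obstacle is simply bookkeeping: the opposite-algebra convention and the duals $W_{i}^{*}$ must be tracked carefully, and one needs to notice the inversion $h\sim g^{-1}$ (rather than $h\sim g$) forced by the absence of a complex conjugate inside $\sum_{k}\chi_{W_{k}}(g)\chi_{W_{k}}(h)$. Once the entries are correctly written down, the identity follows from character orthogonality together with the conjugation-invariance of the characteristic polynomial, with no further input required.
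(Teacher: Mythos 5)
Your proposal is correct and follows essentially the same route as the paper: the numerical Koszulity criterion applied via Molien's formulas, with the second orthogonality relation collapsing the double sum over $G\times G$ to a conjugacy locus on which the characteristic-polynomial factors cancel, and the first orthogonality relation finishing with $\delta_{ij}$. The only differences are cosmetic bookkeeping (your conjugacy condition reads $h\sim g^{-1}$ because you do not pre-reindex the exterior Molien sum over $h^{-1}$ as the paper does, and you cancel the determinant factors directly rather than clearing denominators by $\Delta=\prod_{g}\Delta_{g}$), neither of which changes the substance.
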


\begin{proof}
We use the numerical criterion and the isomorphism $E(B) \simeq (\bigwedge^{\!\! \bull}\! V \otimes \End(W))^{G}$.
The degree zero part of $B$ is commutative semisimple with one idempotent
$e_{i}=1_{W_{i}} \in \Hom_{G}(W_{i},W_{i})$ for each irreducible $W_{i}$. Thus we have matrices $S(t):=P(B,t)$ and $ E(t):=P(E(B),t)$ of Poincar\'e series with rows and columns indexed by irreducibles. 

We need to check that $S(t)\cdot E(-t)={\rm Id}$. The $(p,r)$ entry of the product takes the form $\sum_{q}S_{pq}(t)\cdot E_{qr}(-t)$. Letting $\chi_{kl}$ be the character of the representation $\Hom(W_{k},W_{l})$ and setting $\Delta_{g}={\rm det}_{V}(1-g^{-1}\cdot t)$, Molien's formulae give
$$S_{pq}=\frac{1}{|G|}\sum_{g}\frac{\chi_{qp}(g)}{\Delta_{g}} \;\;\;\; \mbox{and} \;\;\;\; E_{qr}(-t)=\frac{1}{|G|}\sum_{h}\chi_{qr}(h^{-1})\Delta_{h},$$
where in the expression for $S_{pq}(t)$ we have $\chi_{qp}$ instead of $\chi_{pq}$ because we have taken the opposite algebra
and in the expression for $E_{qr}(-t)$ we take the value of the character on $h^{-1}$ so that we have $\Delta_{h}$ instead of $\Delta_{h^{-1}}$.

Letting $\Delta=\prod_{g \in G} \Delta_{g}$, the $(p,r)$ entry of our product is
$$\sum_{q}S_{pq}(t)\cdot E_{qr}(-t)=\frac{1}{|G|^{2}\Delta}\sum_{q}\sum_{g,h}\chi_{q}(g^{-1})\chi_{p}(g)\chi_{q}(h)\chi_{r}(h^{-1})\Delta_{h}\prod_{k\neq g}\Delta_{k}.$$
We claim that this is equal to zero when $p \neq r$ and equal to one when $p=r$. Thus the product of our matrices will indeed be the identity. To see this, notice that
the contribution to the above expression coming from a fixed pair $g,h$ and summing over $q$ will be
$$\frac{1}{|G|^{2}\Delta}\chi_{p}(g)\chi_{r}(h^{-1})\Delta_{h}\prod_{k\neq g}\Delta_{k}\sum_{q}\chi_{q}(g^{-1})\chi_{q}(h).$$
The factor before the sum is constant for a fixed pair $g,h$ and the sum itself can be determined by the second orthogonality relation for characters: it is equal to zero when 
$g$ and $h$ are not conjugate and is equal to $|C_{G}(h)|$ when they are conjugate.

Let $g \sim h$ denote that $g$ and $h$ are conjugate in $G$. After summing up $q$ we are left with
\begin{gather*}
\sum_{q}S_{pq}(t)\cdot E_{qr}(-t)=\frac{1}{|G|\Delta}\sum_{\substack{ (g,h) \\ g \sim h}}\frac{|C_{G}(h)|}{|G|}\chi_{p}(g)\chi_{r}(h^{-1})\Delta_{h}\prod_{k\neq g}\Delta_{k}.
\end{gather*}
Note that the sum on the right is over conjugate pairs. 

For each $g$, we have $\Delta_{h}\prod_{k\neq g}\Delta_{k}=\Delta$, so we can cancel the $1/\Delta$ in front of the sum. The summand $\chi_{p}(g)\chi_{r}(h^{-1})$ only depends on the conjugacy class of $h$ since $g$ and $h$ are conjugate, and so the number of times $\chi_{p}(g)\chi_{r}(h^{-1})$ is counted in the sum is the number of elements in the conjugacy class of $h$. Since the factor $\frac{|C_{G}(h)|}{|G|}$ is precisely the reciprocal of this number,
we are left with 
$$\sum_{q}S_{pq}(t)\cdot E_{qr}(-t)=\frac{1}{|G|}\sum_{h} \chi_{p}(h)\chi_{r}(h^{-1})=\delta_{pr},$$ 
where the last equality is from orthogonality of the irreducible characters.
\end{proof}

\medskip

One interesting consequence of Koszulity for the algebra $B$ is that it must be quadratic, by Theorem~\ref{facts}. That is, the natural homomorphism $T^{\bull}_{B_{0}}B_{1} \rightarrow B$
is surjective with kernel generated in degree $2$. In fact, since $B_{0}=\oplus_{i \in I}\Hom_{G}(W_{i}, W_{i})$ is commutative semisimple with primitive orthogonal idempotents $e_{i}=1 \in \Hom_{G}(W_{i}, W_{i})$, we can realize $T^{\bull}_{B_{0}}B$ as the path algebra of a quiver whose vertices are labeled by the $e_{i}$ and whose arrows from $e_{i}$ to $e_{j}$ are identified with a basis for $e_{i}B_{1}e_{j}$. Since in fact $e_{i}B_{1}e_{j}=(V^{*} \otimes \Hom(W_{i}, W_{j}))^{G} \simeq \Hom_{G}(W_{i}, V \otimes W_{j})$,
we get precisely the McKay quiver, as usually defined. ( Let us point out here that the definition of the McKay quiver is therefore not as fanciful as might first appear. ) Thus $B$ itself is realized as the quotient of the path algebra of the McKay quiver by some quadratic relations.

In general the relations can be quite difficult to write down. The best method here is to find a `superpotential' for the algebra $B$ whose derivatives give the relations. For more on this and for another approach to proving Koszulity for an algebra isomorphic to $B$, see the paper of Bocklandt-Schedler-Wemyss \cite{super}.

%DIMENSION TWO SPHERICAL OBJECTS
\subsection{Spherical objects}\label{sectsphericalobjects}

For our purposes, having a presentation of $B$ as the quotient of a path algebra will be less important than the characterization of $B$ in terms of configurations of spherical objects, as introduced by Seidel and Thomas \cite{st}. See \cite[Chapter 8]{huy} for a nice exposition.

In our applications we'll be interested in a smooth, quasi-projective surface $X$ carrying the action of a finite group $G$ and whose canonical bundle is trivial as a $G$-sheaf (for instance, $G \subset SL_{2}(\mathbb{C})$, $X=V$ a $2$-dimensional vector space). In this case, an object $\E \in D^{b}_{G}(X)$ is called {\it spherical} if there is a graded ring isomorphism $\Ext^{\bull}_{G}(\E,\E) \simeq H^{\bull}(S^{2},\mathbb{C})$, where $S^{2}$ is the $2$-sphere. 
 
Given a graph $\Gamma$, a $\Gamma$-{\it configuration} of spherical objects is a collection of spherical objects $\E_{i}$ indexed by the nodes of $\Gamma$ such that $${\rm dim}\;  \Ext^{1}(\E_{i},\E_{j})=\# \; \mbox{edges from $i$ to $j$}$$
and all other $\Ext$s vanish.

In all the cases that we consider, the objects $\E_{i}$ lie in a full triangulated subcategory $\D$ of $D^{b}_{G}(X)$ consisting of objects whose cohomology is supported on a fixed compact subvariety.  Compact support ensures that all $\Hom$s are finite dimensional and that the subcategory $\D$ has Serre duality, while triviality of $\omega_{X}$ means that duality takes the simple form $\Hom_{\D}(\F,\G) \simeq \Hom_{\D}(\G, \F[2])^{*}$. A triangulated category with these properties is commonly called  $2$-{\it Calabi-Yau} ($2$-{\it CY}).

\medskip

A much-studied example (see for instance \cite{klein}) is when $X=V$, a $2$-dimensional vector space, $G \subset SL(V)$, and $\D \subset D^{b}_{G}(V)$ is the full triangulated subcategory supported at the origin. Here we have a $\Gamma$-configuration for $\Gamma$ the affine Dynkin diagram associated to $G$. The spherical objects are $s_{*}W_{i} \simeq W_{i} \otimes {\mathcal O}_{0}$ for the irreducible $G$-representations $W_{i}$. To see that these objects are spherical, recall the isomorphism \ref{extalg}:
$$\Ext^{\bull}_{G}(s_{*}W_{i}, s_{*}W_{i}) \simeq (\bigwedge^{_{\;\;\bull}}\! V \otimes \Hom(W_{i},W_{i}))^{G}.$$ Since $V \otimes \Hom(W_{i},W_{i})$ has no invariants by McKay's observation and since $\bigwedge^{\! 2}V$ is trivial, the latter algebra is indeed isomorphic to $H^{\bull}(S^{2},\mathbb{C})$.
To see that the objects $s_{*}W_{i}$, $i \in \Gamma$ form a $\Gamma$-configuration, note that if $i \neq j$, then $\Ext_{G}^{\bull}( s_{*}W_{i}, s_{*}W_{j}) \simeq (\bigwedge^{\!\! \bull}\! V \otimes \Hom(W_{i},W_{j}))^{G}$ is zero
in degrees $0$ and $2$ by Schur's lemma and $${\rm dim} \; \Ext_{G}^{1}( s_{*}W_{i}, s_{*}W_{j}) ={\rm dim}\; \Hom_{G}(W_{i},V \otimes W_{j})=\# \; \mbox{edges beween $i$ and $j$}$$ by McKay's observation. 

\begin{rem}
It is known that the classes of the $\E_{i}$ form a basis for $K_{0}(\D)$ (compare with Proposition~\ref{groth}). The above discussion shows that in the basis  $\E_{i}$, the natural {\it Euler form} $$\langle \E,{\mathcal F}\rangle=\sum_{k}(-1)^{k}{\rm dim}\,\Extg^{k}(\E,\F)$$ on $K_{0}(\D)$ is given by the Cartan matrix of $\Gamma$, so we may identify $K_{0}(\D)$ with the affine root lattice associated to $\Gamma$ and the $\E_{i}$ with a base of simple roots.

In this way we think of $\D$ as a categorification of the affine root lattice, with bases of simple roots
replaced by $\Gamma$-configurations of spherical objects and the action of the Weyl group
replaced by the action of the braid group $B_{\Gamma}$. For more on this point of view, see Khovanov and Huerfano \cite{catadj}, who use the algebra $E(B)$ to categorify the adjoint representation of the quantum group associated to $\Gamma$.
\end{rem}

The following lemma shows that in dimension two this example is in some sense universal.

%GAMMA CONFIGS IN 2CY CATEGORIES

\begin{lemma}\label{lemmagammaconfigs}
\begin{enumerate}
\item The objects $s_{*}W_{i}$, $i \in \Gamma$ form a $\Gamma$-configuration of spherical objects in the subcategory $\D \subset D^{b}_{G}(\mathbb{C}^{2})$ of objects supported at the origin.

\item The $\Ext$-algebra of this $\Gamma$ configuration,

$$\Ext_{G}^{\bull}(\bigoplus_{i \in \Gamma} \E_{i},\bigoplus_{i \in \Gamma} \E_{i}),$$
is Koszul with Koszul dual $\Pi_{\Gamma}$, the preprojective algebra of the diagram $\Gamma$.

\item
Let $\Gamma$ be an affine Dynkin diagram of type {\rm ADE}, $\E_{i}$, $i \in \Gamma$ a $\Gamma$-configuration of spherical objects
in a $2$-CY category $\D$, and $\E^{\prime}_{i}$, $i \in \Gamma$ another $\Gamma$-configuration in a possibly different $2$-CY category $\D^{\prime}$. Then the $\Ext$-algebras of the two $\Gamma$-configurations are isomorphic:

$$\Ext^{\bull}_{\D}(\bigoplus_{i \in \Gamma} \E_{i}, \bigoplus_{i \in \Gamma} \E_{i}) \simeq \Ext^{\bull}_{\D^{\prime}}(\bigoplus_{i \in \Gamma} \E^{\prime}_{i}, \bigoplus_{i \in \Gamma} \E^{\prime}_{i}).$$

By 2., any such $\Ext$-algebra is Koszul.
\end{enumerate}
\end{lemma}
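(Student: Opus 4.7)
Part 1 is essentially a recap of the discussion preceding the lemma. The plan is to cite the isomorphism \ref{extalg}, which gives
\[\Ext^{\bull}_{G}(s_{*}W_{i}, s_{*}W_{j}) \simeq (\textstyle\bigwedge^{\bull}\! V \otimes \Hom(W_{i},W_{j}))^{G},\]
then read off that the degree $0$ and degree $2$ pieces vanish for $i\neq j$ (Schur, plus triviality of $\bigwedge^{2}V$ as a $G$-representation), that the degree $1$ piece has dimension equal to the number of edges between $i$ and $j$ by McKay's observation, and that for $i=j$ the algebra is $H^{\bull}(S^{2},\mathbb{C})$.

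For part 2, the plan is to combine the preceding theorems. The isomorphism \ref{extalg} identifies the $\Ext$-algebra of the universal $\Gamma$-configuration with $E(B)$ for $B = (S^{\bull}V^{*}\otimes\End(W)^{\rm op})^{G}$. By Theorem~\ref{koszul}, $B$ is Koszul, and so Theorem~\ref{facts}(2) gives that $E(B)$ is Koszul with Koszul dual (canonically) $\simeq B^{\rm op}$. When $G \subset SL_{2}(\mathbb{C})$, the identification $B \simeq \Pi_{\Gamma}$ (mentioned in the introduction and justified via the superpotential approach, as cited in \cite{super}) completes the argument, using that $\Pi_{\Gamma}$ is isomorphic to its own opposite via the standard anti-involution swapping $a \leftrightarrow a^{*}$ on the doubled quiver.

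For part 3, the aim is to show that the $\Ext$-algebra of a $\Gamma$-configuration in \emph{any} $2$-CY category $\D$ depends only on $\Gamma$ up to isomorphism. First I would pin down the underlying graded vector space: by the defining axioms of a $\Gamma$-configuration the degree $0$ part is $\bigoplus_{i} \mathbb{C}\, e_{i}$, the degree $1$ part decomposes into blocks $\Ext^{1}(\E_{i},\E_{j})$ whose dimensions are the edge-multiplicities of $\Gamma$, and the degree $2$ part is determined by Serre duality $\Ext^{2}(\E_{i},\E_{j}) \simeq \Hom(\E_{j},\E_{i})^{*}$, which collapses to $\bigoplus_{i}\mathbb{C}\,e_{i}^{\vee}$. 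Since the algebra is concentrated in degrees $0,1,2$, the only non-trivial multiplication to analyze is the pairing $\Ext^{1}(\E_{i},\E_{j}) \times \Ext^{1}(\E_{j},\E_{i}) \to \Ext^{2}(\E_{i},\E_{i}) \simeq \mathbb{C}$. The key point, and the main obstacle, is to show this pairing is non-degenerate in a way that matches the universal case: non-degeneracy follows because the pairing is Serre-dual to $\Hom(\E_{j},\E_{j}) \times \Ext^{2}(\E_{j},\E_{i}) \to \Ext^{2}(\E_{j},\E_{i})$, which is just the module structure of the $1$-dimensional space $\Ext^{2}(\E_{j},\E_{i})$ over $\Hom(\E_{j},\E_{j}) = \mathbb{C}$.

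With this non-degeneracy in hand, the plan is to construct the required isomorphism concretely: pick any basis of $\Ext^{1}(\E_{i},\E_{j})$ on the $\D$-side, take the Serre-dual basis of $\Ext^{1}(\E_{j},\E_{i})$, and match these to corresponding choices in $\D^{\prime}$. The resulting linear map between the $\Ext$-algebras is forced to respect composition because, in both categories, the products in degree $2$ are normalized by the same Serre-duality pairings. The final Koszulity assertion is then immediate from part~2 applied to the universal example.
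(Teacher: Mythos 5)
Your parts 1 and 3 track the paper's own argument closely. Part 1 is, as you say, just the discussion preceding the lemma (the isomorphism \ref{extalg} plus McKay's observation and Schur). Part 3 is proved in the paper exactly along your lines: reduce to the degree-one pieces, observe that the only nontrivial composition is $\Ext^{1}(\E_{j},\E_{i})\otimes\Ext^{1}(\E_{i},\E_{j})\to\Ext^{2}(\E_{i},\E_{i})\simeq\mathbb{C}$, identify it with the Serre pairing, and build the isomorphism by fixing an identification for one orientation of each edge and letting duality dictate the other. One local complaint: your justification of non-degeneracy via ``the module structure of $\Ext^{2}(\E_{j},\E_{i})$ over $\Hom(\E_{j},\E_{j})$'' does not parse, since $\Ext^{2}(\E_{j},\E_{i})=0$ for $i\neq j$ by the $\Gamma$-configuration axioms. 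No detour is needed: composition followed by the trace isomorphism $\Ext^{2}(\E_{i},\E_{i})\simeq\mathbb{C}$ \emph{is} the Serre-duality pairing $\Hom(\E_{i},\E_{j}[1])\otimes\Hom(\E_{j}[1],\E_{i}[2])\to\mathbb{C}$, which is perfect by definition of a Serre functor; this is precisely what the paper asserts.

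Part 2 is where you take a genuinely different route. The paper's proof is a direct computation: Example~\ref{typeA} writes down generators and quadratic relations for the $\Ext$-algebra of the configuration $s_{*}W_{i}$ (vanishing of compositions landing in $\Ext^{2}(s_{*}W_{i},s_{*}W_{k})$ for $k\neq i$, plus antisymmetry of the Serre pairing) and computes the quadratic dual by hand, finding exactly the preprojective relations; types D and E are asserted to be similar. You instead combine Theorem~\ref{koszul}, Koszul duality $E(E(B))\simeq B$ from Theorem~\ref{facts}, and the identification $B\simeq\Pi_{\Gamma}$ imported from the literature (\cite{super}, or the classical Reiten--Van den Bergh/Crawley-Boevey--Holland result). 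This is a valid argument and arguably cleaner, but note that it inverts the paper's logic: internally, the isomorphism $B\simeq\Pi_{\Gamma}$ is a \emph{consequence} of part 2 (via Remark~\ref{useful}), established by exactly the quadratic-dual computation you are skipping. So your proof is only as self-contained as the external citation; if that import is disallowed, you still owe the identification of the dual relation space $R^{\perp}$ with the preprojective relations. (A minor convention point: Theorem~\ref{facts} gives $E(E(B))\simeq B$, not $B^{\rm op}$; as you note, the discrepancy is immaterial because $\Pi_{\Gamma}\simeq\Pi_{\Gamma}^{\rm op}$.)
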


\begin{proof}
\noindent 1. We have already seen this in the discussion before the lemma.

\medskip
\noindent 2. This is well-known and not difficult. See Example~\ref{typeA} for an argument in Type A. Types D and E are dealt with similarly.

\medskip
\noindent 3. Let $\{\E_{i}\}$ be a $\Gamma$-configuration in a $2$-CY category. Then
$\Hom(\E_{i}, \E_{i}) \simeq \mathbb{C}$ and $\Ext^{2}(\E_{i}, \E_{i}) \simeq \Hom(\E_{i}, \E_{i})^{*} \simeq \mathbb{C}$ by sphericity and Serre duality and all other $\Hom$s and $\Ext^{2}$s are zero by the condition of being a $\Gamma$-configuration. Thus the composition $\Ext^{1}(\E_{j}, \E_{k}) \otimes \Ext^{1}(\E_{i}, \E_{j}) \rightarrow \Ext^{2}(\E_{i}, \E_{k})$ is zero unless $i=k$, in which case the composition $$\Ext^{1}(\E_{j}, \E_{i}) \otimes \Ext^{1}(\E_{i}, \E_{j}) \rightarrow \Ext^{2}(\E_{i}, \E_{i}) \simeq \mathbb{C}$$
is just the Serre pairing and we have
$\Ext^{1}(\E_{i}, \E_{j}) \simeq  \Ext^{1}(\E_{j}, \E_{i})^{*}$.

Now let $\{\E_{i}^{\prime}\}$ be another $\Gamma$-configuration in a possibly different $2$-Calabi-Yau category.
To establish an isomorphism between the $\Ext$-algebras of $\oplus_{i}\E_{i}$ and $\oplus_{i} \E_{i}^{\prime}$, it is enough
to take the natural identifications for $\Hom$s and $\Ext^{2}$s and then to give an isomorphism on $\Ext^{1}$s compatible with the above pairing. To achieve this, choose for every pair of adjacent vertices $i$ and $j$ a postive direction $i \rightarrow j$ and give an isomorphism $\Ext^{1}(\E_{i}, \E_{j})\simeq \Ext^{1}(\E_{i}^{\prime}, \E_{j}^{\prime})$. Then letting the isomorphism for the negative direction $j \rightarrow i$ be determined by duality ensures compatability with the pairing.
\end{proof}

\begin{rem}\label{useful}
The lemma is very useful. If $B$ is any graded algebra for which $E(B)$ is an $\Ext$-algebra of a $\Gamma$-configuration, then by the third part of the lemma there is an isomorphism
$$E(B) \simeq \Ext_{G}^{\bull}(\bigoplus_{i \in \Gamma} \E_{i},\bigoplus_{i \in \Gamma} \E_{i}).$$ Then by Koszul duality (Theorem~\ref{facts}) and the second part of the lemma,
$$B \simeq E(E(B))\simeq \Pi_{\Gamma}.$$

This will be important in Section~\ref{sectreflectfunctors}, where we use $\Gamma$-configurations of spherical objects to relate equivariant sheaves on the cotangent bundle $\T$ to the above universal example.
\end{rem}

\begin{example}\label{typeA} Let $G=\mathbb{Z}/n\mathbb{Z}$. Defining $W_{1}$ to be the irreducible representation of $\mathbb{Z}/n\mathbb{Z}$ where $1$ acts as multiplication by $\zeta=e^{2\pi\, i/n}$, all of the other irreducible representations are just powers of this one, which we denote by $W_{0}, W_{1}, \dots, W_{n-1}$. The group $\mathbb{Z}/n\mathbb{Z}$ embeds in $SL_{2}(\mathbb{C})$ with cyclic generator

\begin{displaymath}
\left( \begin{array}{ccc} 
\zeta & 0\\ 
0 & \zeta^{-1}
\end{array} \right)
\end{displaymath}
so that the standard  representation takes the form $V=W_{1} \oplus W_{n-1}$. In this case it is straight-forward to check that the McKay quiver is 

\begin{equation*}
\xygraph{ !~:{@{-}|@{>}}
!{<0cm,0cm>;<.6cm,0cm>:<0cm,.6cm>::}
!{(7,-3.75)}*{0}
!{(0,.75)}*{1}
!{(4,.75)}*{2}
!{(10,.75)}*{n-2}
!{(14,.75)}*{n-1}
!{(0,0)}*{\bull}="a"
!{(4,0)}*{\bull}="b"
!{(7,0)}*{...}
!{(6,.25)}*{}="d"
!{(6,-.25)}*{}="D"
!{(8,.25)}*{}="h"
!{(8,-.25)}*{}="H"
!{(10,0)}*{\bull}="e"
!{(14,0)}*{\bull}="f"
!{(7,-3)}*{\circ}="g"
"a":@/^.3pc/"b", "b":@/^.3pc/"a" , "b":@/^.1pc/"d","D":@/^.1pc/"b", "e":@/^.3pc/"f",  "f":@/^.3pc/"e", "a":@/^.35pc/"g", "g":@/^.35pc/"a", "f":@/^.35pc/"g", "g":@/^.35pc/"f", "h":@/^.1pc/"e","e":@/^.1pc/"H", 
}
\end{equation*}

To find a presentation for $B=\End_{G}(\pi^{*}W)^{\rm op}$ we first find a presentation for $E(B)=\Ext^{\bull}_{G}(s_{*}W, s_{*}W)$ and then compute its quadratic dual $E(B)^{!}$. By Theorem~\ref{facts}, there is a canonical isomorphism $B \simeq {E(B)^{!}}^{\rm op}$, so this will be sufficient. 

First we find a nice basis of generators for $E(B)_{1}$ over $E(B)_{0}=\bigoplus_{i}\Hom_{G}(s_{*}W_{i},s_{*}W_{i}) \simeq \bigoplus_{i}\Hom_{G}(W_{i},W_{i}) \simeq B_{0}$.
Each summand  in $E(B)_{1}= \bigoplus_{i \rightarrow j}\Ext^{1}_{G}(s_{*}W_{i},s_{*}W_{j})$
is one dimensional since the $s_{*}W_{i}$ form a $\Gamma$-configuration. Choose non-zero clockwise arrows
$(i| i+1) \in \Ext^{1}(s_{*}W_{i},s_{*}W_{i+1})$ and define the counterclockwise arrows 
$(i+1| i) \in \Ext^{1}(s_{*}W_{i},s_{*}W_{i-1})$ by requiring that the whole collection
form Darboux coordinates for the antisymmetric Serre pairing

\begin{equation*}
\xymatrix{
\Ext^{1}_{G}(s_{*}W, s_{*}W) \otimes \Ext^{1}_{G}(s_{*}W, s_{*}W) \ar[r] & \Ext^{2}_{G}(s_{*}W, s_{*}W) \ar[r]^{\rm \;\;\;\;\;\;\;\;\;\;\;\; \;\;Tr} & \mathbb{C}
}
\end{equation*}
Denoting the product $(j|k)\cdot (i|j)=(i|j|k)$, then component by component, this means that ${\rm Tr}((i|i+1| i))=1$ and
${\rm Tr}((i+1|i|i+1))=-1$.

Since $\Ext^{2}(s_{*}W_{i},s_{*}W_{k}) = 0$ for $k \neq i$, we certainly have relations $$(j|k)\otimes (i| j)=0$$ 
whenever $k \neq i$. By the antisymmetry of the Serre pairing we also have a relation $$(i+1|i)\otimes (i|i+1) + (i-1|i)\otimes (i|i-1)=0$$
for each $i$. I claim that these two kinds of relations form a basis for the space $R$ of relations, that is, for the kernel of
$\Ext^{1}_{G}(s_{*}W, s_{*}W) \otimes_{B_{0}} \Ext^{1}_{G}(s_{*}W, s_{*}W) \rightarrow \Ext^{2}_{G}(s_{*}W, s_{*}W)$. Indeed, the space on the left has dimension equal to the number of paths of length two, while the space on the right has dimension equal to the number of nodes of the quiver.
Since the map is surjective, the kernel has dimension equal to the number of length two paths minus the number of nodes. But this is precisely the number of relations that we have given. It will thus be enough to see that our relations span the kernel. So consider a relation on paths of length two: $\sum a_{ijk}(i|j|k)=0$. Since $(i|j|k)=0$ when $k \neq i$, we can ignore those terms, and by the splitting $\Ext^{2}_{G}(s_{*}W,s_{*}W) = \bigoplus_{i}\Ext^{2}_{G}(s_{*}W_{i},W_{i})$, we can
consider the individual pieces $a_{ii+1i}(i|i+1|i) + a_{i i-1 i}(i|i-1|i)=0$ of the sum. Taking the trace of this latter relation, we see that $a_{ii+1i}-a_{i i-1 i}=0$, so the relation
is a scalar multiple of $(i|i+1|i) + (i|i-1|i)=0$, and so our relations are indeed sufficient.

\medskip

Having found bases of generators and of relations for $E(B)$ we can compute the quadratic dual $E(B)^{!}$. Let $\langle i| i+1\rangle$ and $\langle i| i-1\rangle$ be dual to $(i+1|i)$ and $(i-1|i)$. I claim that a basis for the dual relations $R^{\perp}$ is given by the elements

\begin{equation}\label{preproj}
\langle i+1|i \rangle\otimes \langle i| i+1\rangle - \langle i-1|i\rangle \otimes \langle i| i-1\rangle
\end{equation}
for each $i$. The number of such elements is equal to the number of nodes, which is exactly the dimension of $R^{\perp}$, and they certainly kill the space $R$, so it is enough to see that they span $R^{\perp}$. Suppose that $\sum b_{ijk}  \langle j | k \rangle  \otimes \langle i | j \rangle \in R^{\perp}$. Then in particular it must kill $(m|n)\otimes (l|m)$ when $l \neq n$, and so $b_{nml}=0$.
This leaves $\sum b_{iji}  \langle j | i \rangle  \otimes \langle i | j \rangle$, which must kill
$(k+1|k)\otimes (k|k+1) + (k-1|k)\otimes (k|k-1)$ for all $k$, so $b_{kk+1k}+b_{kk-1k}=0$, and we see our relations are sufficient.

If $\alpha=\langle i| i+1\rangle$, then $\overline{\alpha}=\langle i+1 | i\rangle$ and the relation
in \ref{preproj} is precisely the $i$th component of the preprojective relations.
\end{example}

%MCKAY FOR PROJECTIVE LINE

\section{McKay correspondence for ${\mathbb P}^{1}$}\label{sectprojmckay}
We review here Kirillov \cite{kir} and prove some related facts that will be useful later.

Let $V$ be a 2-dimensional vector space, set $\mathbb{P}^{1}=\mathbb{P}(V)$, and assume that our finite subgroup $G \subset SL(V)$ contains $\pm I$.  We divide $G$-representations and $G$-sheaves into two types, {\it even} and {\it odd}, depending on whether $-I$ acts trivially or non-trivially.  

We shall be mostly interested in coherent, even $G$-sheaves, which we can also think of as $\widetilde{G}=G/\pm I$-sheaves, where $\widetilde{G}$ is now a subgroup of $PSL(V)$. We denote by ${\rm Coh}_{\widetilde{G}}$ the category whose objects are even $G$-sheaves and whose morphisms
lie in  $\Homg$, the invariant part of $\Hom$ in the category of coherent sheaves. ${\rm Coh}_{\widetilde{G}}$ is abelian and we denote its bounded derived category by $D^{b}_{\widetilde{G}}(\mathbb{P}^{1})$.

It will be convenient to work with odd sheaves as well. For instance, the $G$-action on the trivial bundle $V$ stabilizes the tautological sub-bundle $\mathcal{O}(-1)$. With this natural $G$-action, $\mathcal{O}(-1)$ is an odd sheaf since $-I$ acts non-trivially on the fibres.
As a tensor power of $\mathcal{O}(-1)$, the line bundle $\mathcal{O}(d)$ inherits a natural $G$-action, and its parity as a $G$-sheaf  agrees with the parity of the integer $d$. Thus $\mathcal{O}(d) \otimes W_{i}$ will be an even $G$-sheaf precisely when
the integer $d$ and the representation $W_{i}$ have the same parity. 

In order to keep track of even $G$-sheaves of the above form, we introduce a {\it parity function} $p: \Gamma \rightarrow \mathbb{Z}$ on the vertices of $\Gamma$, where $p(i)=0$ if the irreducible $G$-representation $W_{i}$ is even and $p(j)=1$ if the irreducible $W_{j}$ is odd. Notice also that if two edges in the diagram $\Gamma$ are connected, then they have opposite parity and their heights differ by one.   
 
Generalizing these properties of the parity function, we define a {\it height function} to be a function $h: \Gamma \rightarrow \mathbb{Z}$ on the vertices of $\Gamma$ satisfying the conditions:

\begin{enumerate}

\item $h(i) \equiv p(i)\;{\rm mod}\, 2$,

\item $|h(i) - h(j)|=1$ if $i$ is connected to $j$ in $\Gamma$. 

\end{enumerate}

The first condition says that the parity of the height of a vertex agrees with the parity of the representation $W_{i}$, so each height function $h$ gives rise to a collection of even $G$-sheaves
$$F_{i}^{h} =W_{i} \otimes {\mathcal O}(h(i))$$
indexed by the nodes $i \in \Gamma$. The second condition says that the height goes up or down one step between neighboring vertices of $\Gamma$. The height function then determines an orientation on the edges of $\Gamma$ by letting the edges flow downhill. We denote the resulting quiver by $Q_{h}$. 

\begin{example} Let $G=\mathbb{Z}/n\mathbb{Z}$ and take the height $h$ to be equal to the parity function $p$. The resulting quiver is pictured below, with the vertices labeled by the even $G$-sheaves.
\begin{equation*}
\xygraph{
!{<0cm,0cm>;<.75cm,0cm>:<0cm,.75cm>::}
!{(4,-1.75)}*{W_{0}}
!{(0,1.75)}*{W_{1}(1)}
!{(2,1.75)}*{W_{2}}
!{(6,1.75)}*{W_{n-2}(1)}
!{(8,1.75)}*{W_{n-1}}
!{(0,1)}*{\scriptstyle\bullet}="a"
!{(2,1)}*{\scriptstyle\bullet}="b"
!{(3,1)}*{}="c"
!{(4,1)}*{...}
!{(5,1)}*{}="d"
!{(6,1)}*{\scriptstyle\bullet}="e"
!{(8,1)}*{\scriptstyle\bullet}="f"
!{(4,-1)}*{\scriptstyle\circ}="g"
"a":"b", "c":"b", "d":"e", "f":"e", "a":"g", "f":"g"
}
\end{equation*}
\end{example}

We can now give a main result of \cite{kir}, stated in a form convenient for us.

%%%
%KIRILLOV'S THEOREM

\begin{theorem}\label{kir}
Let $A_{h}={\rm End}_{G}(\oplus_{i} F_{i}^{h})^{\rm op}$ be the opposite algebra of the endomorphism algebra of the collection $F_{i}^{h}$, $i \in \Gamma$. 
Then the natural functor
$R\Phi_{h}:= R\Hom_{G}(\oplus_{i} F_{i}^{h},-)$

\begin{equation*} 
\xymatrix{
D^{b}_{\widetilde{G}}(\mathbb{P}^1) \ar[r]^{R\Phi_{h}} & D^{b}(A_{h})  
}
\end{equation*}
to the bounded derived category of finitely generated left $A_{h}$-modules is an equivalence. 
\end{theorem}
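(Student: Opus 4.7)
The plan is to identify $T := \bigoplus_{i \in \Gamma} F_i^h$ as a classical tilting object in $D^b_{\widetilde{G}}(\mathbb{P}^1)$, after which the statement becomes an instance of the standard tilting equivalence of Bondal/Rickard. Concretely, I would verify that $\Ext^k_G(T,T) = 0$ for all $k \geq 1$ and that the $F_i^h$ classically generate $D^b_{\widetilde{G}}(\mathbb{P}^1)$ as a triangulated category; the equivalence $R\Phi_h$ then follows, using that $A_h$ is Noetherian and that $\mathbb{P}^1$ has cohomological dimension one.

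For the Ext vanishing only $k=1$ is nontrivial. Serre duality together with the triviality of $\omega_{\mathbb{P}^1} \simeq \mathcal{O}(-2)\otimes\wedge^2 V$ as a $G$-sheaf (which holds because $G \subset SL(V)$) and the self-duality of $V$ reduce the problem to showing
$$\Hom_G(W_i,\, S^{h(i)-h(j)-2}V \otimes W_j) = 0 \quad\text{whenever } h(i) - h(j) \geq 2.$$
I would prove this inductively via McKay's decomposition $V \otimes W_k = \bigoplus_{l \sim k} W_l$: an irreducible $W_m$ occurring in $S^n V \otimes W_j$ must arise from an $n$-step walk in $\Gamma$ starting at $j$, and since the height changes by exactly $\pm 1$ along each edge, every such $W_m$ satisfies $|h(m) - h(j)| \leq n$. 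Setting $n = h(i) - h(j) - 2$ and $m = i$ would force $h(i) - h(j) \leq h(i) - h(j) - 2$, which is absurd.

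For generation, I would iterate the $G$-equivariant Koszul sequence
$$0 \to \mathcal{O}(-2) \to V \otimes \mathcal{O}(-1) \to \mathcal{O} \to 0$$
(which is $G$-equivariant precisely because $\wedge^2 V$ is trivial). Twisting by $W_i \otimes \mathcal{O}(n)$ and applying $V \otimes W_i = \bigoplus_{j\sim i} W_j$ yields triangles linking $W_i\otimes\mathcal{O}(n-2)$, $\bigoplus_{j\sim i} W_j\otimes\mathcal{O}(n-1)$, and $W_i\otimes\mathcal{O}(n)$. Starting from the $F_i^h$ and inducting on $|n-h(i)|$ in both directions (while respecting the parity constraint $n \equiv p(i) \pmod 2$), one sees that every $W_i\otimes\mathcal{O}(n)$ of admissible parity lies in the triangulated envelope of $T$. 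Since such sheaves generate $D^b_{\widetilde{G}}(\mathbb{P}^1)$ by the same equivariant-surjection argument used in the proof of Theorem~\ref{equiv}, $T$ generates as well.

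The main obstacle I anticipate is the $\Ext^1$-vanishing step: it is where the hypothesis that $h$ is a \emph{height function} (rather than an arbitrary integer labeling) must be converted into a precise combinatorial statement about walks in the McKay graph. The generation argument and the appeal to Bondal's theorem are essentially formal once Ext vanishing is in hand.
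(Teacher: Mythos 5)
Your proposal is correct in outline, but it is worth noting that the paper does not actually prove Theorem~\ref{kir}: it is imported from Kirillov's paper as a known result. What the paper \emph{does} prove are exactly the two ingredients your tilting argument requires, namely the $\Ext^{1}$-vanishing (Lemma~\ref{exts}, of which the case $d=0$ is what you need) and generation (Lemma~\ref{span}), because these are reused for the cotangent-bundle analogue, Theorem~\ref{cotan}. The paper's route to both lemmas is an induction over height functions: verify the statement for the parity function (using degree reasons for the $\Ext$-vanishing, and Beilinson's resolution of the diagonal for generation), then propagate it to all other height functions via the sink/source moves $\sigma_{i}^{\pm}$ and the Euler-sequence triangles \ref{euler} and \ref{euler2}. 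Your argument is genuinely different and arguably more self-contained: the walk-counting bound $|h(m)-h(j)|\leq n$ for irreducibles $W_{m}$ occurring in $V^{\otimes n}\otimes W_{j}$ gives the $\Ext^{1}$-vanishing directly for an arbitrary height function, without appealing to the (asserted but unproved) fact that any two height functions are connected by $\sigma_{i}^{\pm}$ moves; and your Koszul-complex induction replaces the Beilinson kernel. What the paper's approach buys instead is that the reflection moves and Euler triangles are set up once and then reused throughout Sections 3--5 (Lemmas~\ref{simpsdiffer}, \ref{span}, Theorem~\ref{twist}), so the inductive structure is not wasted effort there.

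One small repair: in the generation step, the induction on $|n-h(i)|$ does not strictly close up, since for a neighbour $j$ of $i$ with $h(j)=h(i)-1$ the middle term $W_{j}\otimes\mathcal{O}(n-1)$ sits at the same distance $|n-1-h(j)|=|n-h(i)|$ from its own height. Inducting on $n$ itself in the upward direction (and on $-n$ downward), using that all neighbours satisfy $n-1\geq h(j)$ once $n\geq h(i)+2$, fixes this immediately. The rest of the argument, including the appeal to Bondal's tilting theorem and the finite global dimension of $A_{h}$, is sound.
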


Kirillov shows moreover that there is an isomorphism of algebras $A_{h} \simeq \mathbb{C}Q_{h}$, where $\mathbb{C}Q_{h}$ is the path algebra of the quiver $Q_{h}$, so the functor can also be thought of as taking values in the derived category of representations of $Q_{h}$. More precisely, under the isomorphism $A_{h} \simeq \mathbb{C}Q_{h}$, the space of paths in $Q_{h}$ from $i$ to $j$ is given by $\Hom_{G}(F_{j}^{h}, F_{i}^{h}) = e_{j} A_{h} e_{i}$, where the identity morphisms $e_{i}=1 \in \Hom_{G}(F_{i}^{h}, F_{i}^{h})$ form a set of primitive, orthogonal idempotents in $A_{h}$.

\medskip

In the rest of this section we give some definitions and comments about the category $D^{b}_{\widetilde{G}}(\mathbb{P}^1) $ that will be useful later. We let $T$ denote the tangent bundle of $\mathbb{P}^1$ and $\omega$ the canonical bundle. Since $G \subset SL_{2}(\mathbb{C})$, any  isomorphisms $T \simeq \mathcal{O}(2)$ and $\omega \simeq \mathcal{O}(-2)$ will be $G$-equivariant. 

\medskip

We call a vertex $i$ of $Q_{h}$ a {\it sink} if it has lower height than its neighbors, so that arrows are coming in to $i$, and a {\it source} if 
it has greater height than its neighbors, so that arrows are coming out of $i$. Given a height function $h$ for which $i$ is a sink or a source, define a new height function $\sigma_{i}^{+}h$ or $\sigma_{i}^{-}h$ by

\begin{displaymath} 
\sigma_{i}^{+}h(k)= \left\{ \begin{array}{ll} 
h(k) & \textrm{if $i \neq k$}\\ 
h(k) + 2  & \textrm{if $i =k$}
\end{array} \right. \;\;\;\;\;\;\; {\rm or} \;\;\;\;\;\;\;
\sigma_{i}^{-}h(k)= \left\{ \begin{array}{ll} 
h(k) & \textrm{if $i \neq k$}\\ 
h(j) - 2  & \textrm{if $i =k$}.
\end{array} \right.
\end{displaymath}

Since we have assumed $\pm I \subset G$, the Dynkin diagrams that we are considering are bipartite and one can check that one height function differs from another by a sequence of such operations, turning sinks into sources and sources into sinks.

%EULER

\medskip

The following observation of Kirillov is essential. If $i \in Q_{h}$ is a source, then
$$V \otimes F_{i}^{h}(-1) \simeq V \otimes W_{i}\otimes {\mathcal O}(h(i)-1) \simeq \bigoplus_{i \rightarrow j}W_{j}(h(j)) =\bigoplus_{i \rightarrow j} F_{j}^{h},$$
where the first isomorphism is by definition of $F_{i}^{h}$, the second by McKay's observation and the step-wise nature of height functions, and the last is again by definition. Here the sum is over arrows $i \rightarrow j$ leaving the source $i$. Thus 
tensoring the Euler sequence $0 \rightarrow {\mathcal O}(-1) \rightarrow V \rightarrow T(1) \rightarrow 0$ with $F_{i}^{h}(-1)$ and using the above isomorphisms gives

\begin{equation}\label{euler}
0 \rightarrow F_{i}^{\sigma_{i}^{-}h} \rightarrow \bigoplus_{i \rightarrow k}F_{k}^{h} \rightarrow F_{i}^{h} \rightarrow 0.
\end{equation}
Likewise, if $i \in Q_{h}$ is a sink, then tensoring the Euler sequence with $F_{i}^{h}(1)$ gives

\begin{equation}\label{euler2}
0 \rightarrow F_{i}^{h} \rightarrow \bigoplus_{k \rightarrow i}F_{k}^{h} \rightarrow F_{i}^{\sigma_{i}^{+}h} \rightarrow 0.
\end{equation}

%%%EXTS

\begin{lemma}\label{exts}
$\Extg^{1}(F_{k}^{h}, F_{l}^{h} \otimes T^{\otimes d})=0$ for all height functions $h$, all $k,l \in I$, and all $d \in \mathbb{N}$.
\end{lemma}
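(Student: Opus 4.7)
The plan is to proceed by double induction using the $G$-equivariant Euler sequence $0 \to \mathcal{O}(-1) \to V \to \mathcal{O}(1) \to 0$ on $\mathbb{P}^{1}$, with the outer induction on $d$ and the inner induction on $h(l)$ starting from the minimum height attained on $Q_{h}$. The outer base case $d=0$ follows at once from Theorem~\ref{kir}: the derived equivalence $R\Phi_{h}$ sends $\oplus_{i} F_{i}^{h}$ to the projective generator $A_{h}$, so $\oplus_{i} F_{i}^{h}$ is a tilting object and in particular $\Extg^{>0}(F_{k}^{h}, F_{l}^{h}) = 0$ for all $k, l$.

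For $d \geq 1$, I would tensor the Euler sequence with $W_{l} \otimes \mathcal{O}(h(l) + 2d - 1)$ and use $T \simeq \mathcal{O}(2)$ to obtain
$$0 \to F_{l}^{h} \otimes T^{\otimes(d-1)} \to V \otimes W_{l} \otimes \mathcal{O}(h(l) + 2d - 1) \to F_{l}^{h} \otimes T^{\otimes d} \to 0.$$
Applying $R\Homg(F_{k}^{h}, -)$ and using that $\mathbb{P}^{1}$ has cohomological dimension one (so $\Extg^{\geq 2}=0$), the long exact sequence yields a surjection
$$\Extg^{1}(F_{k}^{h}, V \otimes W_{l} \otimes \mathcal{O}(h(l) + 2d - 1)) \twoheadrightarrow \Extg^{1}(F_{k}^{h}, F_{l}^{h} \otimes T^{\otimes d}).$$
McKay's decomposition $V \otimes W_{l} \simeq \bigoplus_{j \sim l} n_{jl} W_{j}$ splits the source into pieces indexed by neighbors $j$ of $l$, and the step-wise axiom $|h(j) - h(l)| = 1$ identifies each piece with $\Extg^{1}(F_{k}^{h}, F_{j}^{h} \otimes T^{\otimes(d-1)})$ when $h(j) = h(l)+1$, or with $\Extg^{1}(F_{k}^{h}, F_{j}^{h} \otimes T^{\otimes d})$ when $h(j) = h(l)-1$.

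Pieces of the first kind vanish by the outer inductive hypothesis, while pieces of the second kind occur only for neighbors $j$ with $h(j) < h(l)$ and so vanish by the inner inductive hypothesis. At the inner base case $l$ is a sink of $Q_{h}$ and only pieces of the first kind appear; the inner induction is well-founded because $Q_{h}$ is a finite directed acyclic graph (arrows go strictly downhill). I expect the only delicate point to be keeping straight which piece of the Euler decomposition reduces to $T^{\otimes(d-1)}$ and which to $T^{\otimes d}$; once that bookkeeping is settled, the induction runs mechanically.
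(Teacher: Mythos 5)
Your proof is correct, but it takes a genuinely different route from the paper's. The paper fixes $d$ arbitrary and inducts over height functions: it verifies the statement for the parity function directly via Serre duality ($\Extg^{1}(F_{k}^{h},F_{l}^{h}\otimes T^{\otimes d})\simeq \Homg(F_{l}^{h}\otimes T^{\otimes(d+1)},F_{k}^{h})^{*}$, which dies for degree reasons), and then propagates the vanishing through the moves $h\mapsto\sigma_{i}^{\pm}h$ by a case analysis on $k,l$ relative to $i$, using the $\mathcal{O}(2)$-twist and one more application of Serre duality at a sink. You instead fix the height function and induct on $d$ and on the value $h(l)$, peeling off one copy of $T$ at a time with the Euler sequence and McKay's decomposition $V\otimes W_{l}\simeq\bigoplus_{j\sim l}n_{jl}W_{j}$ (note there are no loops since $-I\in G$ forces $V\otimes W_{l}$ to have opposite parity to $W_{l}$, so every summand is a genuine neighbour and the step-wise bookkeeping you describe goes through); the surjection you need uses only that $\mathbb{P}^{1}$ has cohomological dimension one and that the $F_{k}^{h}$ are locally free. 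Your approach is arguably the more structural one --- it is the sheaf-level shadow of the decomposition $\pi_{*}\pi^{*}F_{l}^{h}\simeq\bigoplus_{d}F_{l}^{h}\otimes T^{\otimes d}$ that the lemma is ultimately feeding --- but it leans on Theorem~\ref{kir} for the base case $d=0$, whereas the paper's argument is self-contained and reproves that case en route. Your deduction of $\Extg^{>0}(F_{k}^{h},F_{l}^{h})=0$ from the bare statement of Theorem~\ref{kir} is legitimate but deserves a sentence: the quasi-inverse of $R\Phi_{h}$ applied to $A_{h}$ corepresents the same functor as $\oplus_{i}F_{i}^{h}$, so $R\Phi_{h}(\oplus_{i}F_{i}^{h})\simeq A_{h}$ sits in a single degree, forcing the higher self-Exts to vanish. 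Finally, your inner base case is the global minimum of $h$ rather than an arbitrary sink of $Q_{h}$, but since the induction is on the value $h(l)$ and $h$ takes finitely many values, the argument is sound as written.
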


\begin{proof}
First we check that the statement is true when $h$ is the parity function. Then we show that if the statement is true for a height function $h$,
it is also true for the modified height functions $\sigma_{i}^{+}h$ and $\sigma_{i}^{-}h$. Since every height can be obtained from the parity function by a sequence of such modifications, this will establish the lemma.

By Serre duality, $\Ext_{G}^{1}(F_{k}^{h}, F_{l}^{h} \otimes T^{\otimes d}) \simeq \Hom_{G}(F_{l}^{h}\otimes T^{\otimes (d+1)}, F_{k}^{h})^{*}$. 
If $h$ is the parity function, the latter space is zero since $F_{l}^{h} \otimes T^{\otimes (d+1)}$ has higher degree than $F_{k}^{h}$, so the lemma holds for the parity function.  

Now assume the lemma is true for a height $h$ and that $i$ is a sink in $h$. We want to see that the lemma must hold for $\sigma_{i}^{+}h$. Consider the possible values of $k$ and $l$.

If $k=l=i$, then $\Ext_{G}^{1}(F_{i}^{\sigma_{i}^{+}h}, F_{i}^{\sigma_{i}^{+}h} \otimes T^{\otimes d})\simeq \Ext_{G}^{1}(F_{i}^{h}, F_{i}^{h} \otimes T^{\otimes d})$, since tensoring with ${\mathcal O}(2)$ is an equivalence. The latter space is zero by assumption on $h$.

If $k,l \neq i$, then $\sigma_{i}^{+}h(k)=k$ and we have $\Ext_{G}^{1}(F_{k}^{\sigma_{i}^{+}h}, F_{l}^{\sigma_{i}^{+}h} \otimes T^{\otimes d})\simeq \Ext_{G}^{1}(F_{k}^{h}, F_{l}^{h} \otimes T^{\otimes d})$. The latter space is zero by assumption on $h$. 

If $l=i$ and $k \neq i$, then $\sigma_{i}^{+}h(k)=k$ and $F_{l}^{\sigma_{i}^{+}h} = F_{i}^{\sigma_{i}^{+}h} \simeq F_{i}^{h}(2)$, so $\Ext_{G}^{1}(F_{k}^{\sigma_{i}^{+}h}, F_{l}^{\sigma_{i}^{+}h} \otimes T^{\otimes d}) \simeq \Ext_{G}^{1}(F_{k}^{h}, F_{i}^{h} \otimes T^{\otimes(d + 1)})$. The latter space is zero by assumption.

Finally, consider the case $k=i$ and $l \neq i$. If $d \geq 1$, 
$\Ext_{G}^{1}(F_{i}^{h}(2), F_{l}^{h} \otimes T^{\otimes d}) \simeq \Ext_{G}^{1}(F_{i}^{h}, F_{l}^{h} \otimes T^{\otimes(d -1)})$, and we are done by assumption.
If $d=0$,  then  by Serre duality $\Ext_{G}^{1}(F_{i}^{h}(2), F_{l}^{h}) \simeq \Hom_{G}(F_{j}^{h}, F_{i}^{h})^{*}$. The dimension of the latter space is the number of paths from $i$ to $j$ in $Q_{h}$, which is zero since $i$ is a sink. 

Thus if the lemma holds for a height function $h$, then it also holds for $\sigma_{i}^{+}h$. A similar argument shows that it also holds for $\sigma_{i}^{-}h$.
\end{proof}

%BEILINSONRESOLUTION

For the proof of the next lemma we use Beilinson's resolution of the diagonal \cite{beil}, which on $\mathbb{P}^1 \times \mathbb{P}^1$ takes the form
$$0 \rightarrow p^{*}{\mathcal O}(-1)\otimes q^{*}\omega(1) \rightarrow p^{*}{\mathcal O} \otimes q^{*}{\mathcal O} \rightarrow {\mathcal O}_{\Delta} \rightarrow 0,$$
for $p$ and $q$ the projections of  $\mathbb{P}^1 \times \mathbb{P}^1$ onto the left and right factors respectively.
The resolution is canonically constructed and so is automatically $G$-equivariant, and in fact each of its terms is an even $G$-sheaf.

Taking $\F \in D({\rm QCoh}_{\widetilde{G}}(\mathbb{P}^1))$ and using the resolution of the diagonal as the kernel of a derived integral transform, we get an exact triangle
$$ Rq_{*} (p^{*}\F(-1)\otimes q^{*}\omega(1)) \rightarrow Rq_{*}(p^{*}\F\otimes q^{*}{\mathcal O} ) \rightarrow Rq_{*}( p^{*}\F \otimes {\mathcal O}_{\Delta})$$
in $D({\rm QCoh}_{\widetilde{G}}(\mathbb{P}^1))$. Applying the projection formula and flat base-change then gives the exact triangle

\begin{equation}\label{triangle}
R\Gamma(\F(-1))\otimes \omega(1) \rightarrow R\Gamma(\F)\otimes {\mathcal O} \rightarrow \F.
\end{equation} 
Notice that since $R\Gamma(\F) \otimes \mathcal{O} \in D({\rm QCoh}_{\widetilde{G}}(\mathbb{P}^1))$, $R\Gamma(\F)$ must be a complex of even $G$-representations. Similarly, since $\omega(1)$ is an odd sheaf and $R\Gamma(\F(-1)) \otimes \omega(1) \in D({\rm QCoh}_{\widetilde{G}}(\mathbb{P}^1))$, $R\Gamma(\F(-1))$ must be a complex of odd representations in order to make the tensor product with $\omega(1)$ even.

%%%SPANNING

\begin{lemma}\label{span}
For any height function $h$, the collection $F_{i}^{h}$ generates $D({\rm QCoh}_{\widetilde{G}}(\mathbb{P}^1))$ in the sense that  if $R\Hom_{G}(F_{i}^{h}, \F)=0$ for all $i$, then $\F=0$.
\end{lemma}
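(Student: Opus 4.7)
The plan is to reduce to the case of the parity function $h=p$ using the Euler sequences (\ref{euler}) and (\ref{euler2}), and then extract the vanishings of $R\Gamma(\F)$ and $R\Gamma(\F(-1))$ needed to apply the Beilinson-type triangle (\ref{triangle}).

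First I would show that if the hypothesis $R\Hom_G(F_k^h,\F)=0$ for all $k\in I$ holds at some height $h$, then it also holds at $\sigma_i^\pm h$. When $i$ is a source in $h$, applying $R\Hom_G(-,\F)$ to (\ref{euler}) produces a long exact sequence in which the groups involving $F_i^h$ and each $F_k^h$ with $i\to k$ vanish, forcing $R\Hom_G(F_i^{\sigma_i^- h},\F)=0$. Since $F_k^{\sigma_i^- h}=F_k^h$ for $k\neq i$, the full hypothesis is preserved under passage from $h$ to $\sigma_i^- h$, and the symmetric argument with (\ref{euler2}) handles $\sigma_i^+$ at sinks. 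Invoking the remark in the text that any two height functions differ by a finite sequence of such reflections, I reduce to the case $h=p$.

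For the parity function, each even vertex $i$ has $p(i)=0$, so $F_i^p=W_i\otimes\mathcal{O}$ and the hypothesis becomes
$$0=R\Hom_G(W_i\otimes\mathcal{O},\F)=\Hom_G(W_i, R\Gamma(\F)).$$
Because $\F$ is even, $R\Gamma(\F)$ is a complex of even $G$-representations, whose cohomology decomposes as a sum of $W_i$-isotypic components over the even irreducibles; vanishing of every such component forces $R\Gamma(\F)=0$. Repeating the argument at an odd vertex $j$, where $F_j^p=W_j\otimes\mathcal{O}(1)$ and $R\Hom_G(\mathcal{O}(1),\F)=R\Gamma(\F(-1))$, and using that $\F(-1)$ is odd so that $R\Gamma(\F(-1))$ decomposes under the odd irreducibles, yields $R\Gamma(\F(-1))=0$.

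Substituting these two vanishings into (\ref{triangle}) collapses the triangle to $0\to 0\to\F$, so $\F=0$. The main obstacle is the combinatorial claim invoked in the reduction step, namely that any two height functions are linked by a finite sequence of $\sigma_i^\pm$ moves; this is asserted in the text as a consequence of the bipartiteness of the affine Dynkin diagrams, but deserves an explicit verification before being used.
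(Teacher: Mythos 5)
Your proposal is correct and follows essentially the same route as the paper: the base case at the parity function via the Beilinson triangle (\ref{triangle}) and the isotypic decomposition of $R\Gamma(\F)$ and $R\Gamma(\F(-1))$, and the inductive step via two-out-of-three vanishing in the triangle obtained by applying $R\Hom_{G}(-,\F)$ to the Euler sequences (\ref{euler}) and (\ref{euler2}). The only (logically equivalent) difference is the direction of the induction --- you propagate the hypothesis from $h$ toward $p$, whereas the paper propagates the validity of the lemma from $p$ out to $h$ --- and your caution about verifying that reflections connect any two height functions is warranted, since the paper also only asserts this.
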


\begin{proof}
As in the proof of Lemma~\ref{exts}, we first show the statement is true when $h$ is the parity function and then show that if the statement is true for a height function $h$, then it is also true for $\sigma_{i}^{+}h$ and $\sigma_{i}^{-}h$.

First let $h$ be the parity function and assume $R\Hom_{G}(F_{i}^{h}, \F)=0$ for all $F_{i}^{h}$. Note that $R\Gamma(\F)=R\Hom({\mathcal O},\F)$ and $R\Gamma(\F(-1)) \simeq R\Hom({\mathcal O}(1),\F)$. We claim that the assumption $R\Hom_{G}(F_{i}^{h}, \F)=0$ implies both are zero, and so $\F=0$ from the exactness of triangle \ref{triangle}. 

Indeed, if $R\Gamma(\F)=R\Hom({\mathcal O},\F)$ were non-zero, then since it consists of even representations it would contain some non-zero irreducible even representation $W_{i}$ and we would have $(W_{i}^{*} \otimes R\Hom({\mathcal O},\F))^{G} \simeq R\Homg(W_{i},\F) \neq 0$. This contradicts the assumption that $R\Hom_{G}(F_{i}^{h}, \F)=0$ for all $i$, since $W_{i}=F_{i}^{h}$ for $h$ the parity function and $i$ even.
Likewise, by assumption $R\Hom_{G}(W_{i}(1),\F) \simeq (W_{i}^{*}\otimes R\Hom({\mathcal O}(1),\F))^{G}=0$ for all odd representations $W_{i}$, but  $R\Hom({\mathcal O}(1),\F)$ consists of odd representations, and so must be zero.

Now assume the conclusion of the lemma holds for a height function $h$. We will show that this implies the lemma for $\sigma_{i}^{-}h$, where $i$ is a source in $h$. Suppose that $R\Hom_{G}(F_{k}^{\sigma_{i}^{-}h},\F)=0$ for all $k$. We want to see that this implies $\F=0$. Since $\sigma_{i}^{-}h$ differs from $h$ only at $i$, we have $R\Hom_{G}(F_{k}^{\sigma_{i}^{-}h},\F)=0$ for all $k \neq i$ by assumption on $h$. We claim further that $R\Hom_{G}(F_{i}^{h},\F)=0$, and so we shall have $\F=0$ by the assumption on $h$. To sustain the claim, recall sequence \ref{euler}: $0 \rightarrow F_{i}^{\sigma_{i}^{-}h} \rightarrow \bigoplus_{i \rightarrow j}F_{j}^{h} \rightarrow F_{i}^{h} \rightarrow 0$.
Applying $R\Hom_{G}(-,\F)$ gives an exact triangle of complexes of vector spaces
$$R\Hom_{G}(F_{i}^{h},\F) \rightarrow \bigoplus_{i \rightarrow j} R\Hom_{G}(F_{j}^{h},\F) \rightarrow R\Hom_{G}(F_{i}^{\sigma_{i}^{-}h},\F).$$
The last two terms are zero by assumption on $\sigma_{i}^{-}h$, so the first term must be zero too, as claimed. 

A similar argument shows that if the lemma holds for $h$, then it also holds for $\sigma_{i}^{+}h$ when $i$ is a sink. Thus the lemma holds for all height functions.
\end{proof}

%REMARKS ON CATEGORY OF QUIVER REPRESENTATIONS
We conclude this section by making some standard remarks about categories of modules over finite dimensional algebras (see \cite{ars}) and introducing some important $t$-structures on the category $D^{b}_{\widetilde{G}}(\mathbb{P}^1)$. 

Since the algebra $A_{h}=\End_{G}(\oplus_{i} F_{i}^{h})^{\rm op}$ is finite dimensional, the category $A_{h}\mbox{-mod}$ of finitely generated modules is of finite length, meaning that every object has a finite filtration with simple quotients, and by the Jordan-Holder theorem these simples and their multiplicities do not depend on the filtration. The simple representations of the algebra $A_{h}$ are indexed by the vertices of the diagram $\Gamma$. Given a vertex $i$,  we have the $i$th idempotent $e_{i}=1 \in \Homg(F_{i}^{h},F_{i}^{h})$ and
the corresponding simple is 
$$S_{i}^{h}:=e_{i}A_{h}e_{i} = e_{i} \Homg(\oplus_{j}F_{j}^{h},\oplus_{j}F_{j}^{h})=\Homg(F_{i}^{h},F_{i}^{h}).$$
In terms of the quiver $Q_{h}$, the simple consists of a one-dimensional vector space at $i$ and zeroes elsewhere. By the Jordan-Holder 
theorem, the classes of the simples $S_{i}^{h}$ form a basis for $K_{0}(Q_{h})$.

We also have the indecomposable projectives $P_{i}^{h}=A_{h}e_{i}$, which are dual to the $S_{i}^{h}$ under the Euler form on $K_{0}(Q_{h})$:
$$\langle P_{i}^{h},S_{j}^{h} \rangle= \sum_{k}(-1)^{k}{\rm dim} \, \Ext^{k}(P_{i}^{h},S_{j}^{h})={\rm dim} \, \Hom(P_{i}^{h},S_{j}^{h})=\delta_{ij}.$$
Since every representation has a resolution by sums of the $P_{i}^{h}$, the classes of the $P_{i}^{h}$ span $K_{0}(Q_{h})$, and by duality with $S_{i}^{h}$, they are linearly independent, so the indecomposable projectives provide another basis for $K_{0}(Q_{h})$.

Applying the inverse equivalences from Theorem~\ref{kir}, we get for each height function $h$ the heart of a bounded $t$-structure 
$${\mathcal A}_{h}:=R\Phi_{h}^{-1}(A_{h}\mbox{-mod}) \subset D^{b}_{\widetilde{G}}(\mathbb{P}^1)$$
with simple objects $E_{i}^{h}=R\Phi_{h}^{-1}(S_{i}^{h})$ and indecomposable projectives $R\Phi_{h}^{-1}(P_{i}^{h})$, which are dual with respect to the Euler form on $K_{0}(\D^{b}_{\widetilde{G}}(\mathbb{P}^1))$. In fact, $R\Phi_{h}(F_{i}^{h})=\Homg(\oplus_{j}F_{j}^{h},F_{i}^{h})=A_{h}e_{i}=P_{i}^{h}$, so the indecomposable projectives of $\mathcal{A}_{h}$ are just the objects $F_{i}^{h}=R\Phi_{h}^{-1}(P_{i}^{h})$.

\begin{rem}\label{character}
Notice that since $R\Phi_{h}(E_{i}^{h})=R\Homg(\oplus_{j}F_{j}^{h}, E_{i}^{h})= S_{i}^{h}$, which is just a one-dimensional vector space concentrated at the $i$th vertex of $Q_{h}$, the simple $E_{i}^{h} \in {\mathcal A}_{h}$ is the unique object in $D^{b}_{\widetilde{G}}(\mathbb{P}^1)$ for which 
\begin{displaymath} 
R\Homg(F_{k}^{h},E_{i}^{h}) =\left\{ \begin{array}{ll} 
\mathbb{C} & \textrm{if $i=k$}\\ 
0 & \textrm{if $i \neq k$} 
\end{array} \right. 
\end{displaymath} 
where $\mathbb{C}$ denotes the complex with $\mathbb{C}$ in degree zero and zeroes elsewhere.
\end{rem}

\medskip

The hearts ${\mathcal A}_{h}$ will play an important role in our discussion. In particular, we need the following two lemmas, which describe how the simples of one heart are related to each other and to the simples of another heart.

\begin{lemma}\label{simpext} We have

\begin{displaymath} 
{\rm dim} \; \Extg^{k}(E_{i}^{h}, E_{j}^{h}) = \left\{ \begin{array}{ll} 
1 & \textrm{if $i=j$ and $k=0$}\\ 
1 & \textrm{if $i \rightarrow j$ and $k=1$}\\  
0 & \textrm{otherwise}
\end{array} \right. 
\end{displaymath} 
\end{lemma}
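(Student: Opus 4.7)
The plan is to transport the computation through the Kirillov equivalence $R\Phi_{h}$ of Theorem~\ref{kir}. Since $E_{i}^{h}=R\Phi_{h}^{-1}(S_{i}^{h})$ and the equivalence preserves $\Ext$ groups, it suffices to compute $\Ext^{k}_{A_{h}}(S_{i}^{h},S_{j}^{h})$ on the quiver side, where standard facts about path algebras apply.

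First I would observe that $Q_{h}$ is acyclic: following any arrow strictly decreases the value of $h$, so no oriented cycle is possible. Hence $A_{h}\simeq \mathbb{C}Q_{h}$ is hereditary and $\Ext^{k}_{A_{h}}=0$ for $k\geq 2$. To handle the remaining degrees I would use the standard length-one projective resolution of the simple $S_{i}^{h}$,
\[
0\longrightarrow \bigoplus_{i\to k\text{ in }Q_{h}} P_{k}^{h}\longrightarrow P_{i}^{h}\longrightarrow S_{i}^{h}\longrightarrow 0,
\]
whose leftmost map is built from the arrows starting at $i$ via the identification $e_{k}A_{h}e_{i}=\Homg(F_{k}^{h},F_{i}^{h})$. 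Since $R\Phi_{h}^{-1}$ sends $P_{k}^{h}$ to $F_{k}^{h}$, this transports to a short exact sequence
\[
0\longrightarrow \bigoplus_{i\to k\text{ in }Q_{h}} F_{k}^{h}\longrightarrow F_{i}^{h}\longrightarrow E_{i}^{h}\longrightarrow 0
\]
in the heart $\mathcal{A}_{h}$.

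To conclude, I would apply $R\Homg(-,E_{j}^{h})$ to this resolution and read off the cohomology using the characterization $R\Homg(F_{l}^{h},E_{j}^{h})=\delta_{lj}\mathbb{C}$ of Remark~\ref{character}. The kernel term computes $\Extg^{0}(E_{i}^{h},E_{j}^{h})$ and equals $\mathbb{C}$ when $i=j$ (the summands with $i\to k$ all satisfy $k\neq i$, so the target of the differential vanishes) and $0$ otherwise. The cokernel term computes $\Extg^{1}(E_{i}^{h},E_{j}^{h})$ and picks up a copy of $\mathbb{C}$ for each arrow $i\to j$ in $Q_{h}$, giving the claimed formula. The only subtlety to track is the orientation convention: the identification $e_{j}A_{h}e_{i}=\Homg(F_{j}^{h},F_{i}^{h})$ of paths from $i$ to $j$ is what ensures that arrows \emph{out of} $i$ are the summands appearing in the resolution of $S_{i}^{h}$, so that $\Extg^{1}(E_{i}^{h},E_{j}^{h})$ is governed by arrows $i\to j$ rather than $j\to i$.
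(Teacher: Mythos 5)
Your proposal is correct and follows exactly the route the paper takes: the paper's proof simply says to verify the statement for the simples $S_{i}^{h}$ over the path algebra $\mathbb{C}Q_{h}$ and transport it back through $R\Phi_{h}^{-1}$, which is precisely what you do, only with the acyclicity of $Q_{h}$, the standard projective resolution of $S_{i}^{h}$, and the orientation conventions spelled out explicitly. Your careful handling of the convention $e_{j}A_{h}e_{i}=\Homg(F_{j}^{h},F_{i}^{h})$ is a welcome addition that the paper leaves implicit.
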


\begin{proof}
The corresponding statement can be easily checked for the simples $S_{i}^{h}$ (just think about when there can be morphisms and extensions between the simple representations of $Q_{h}$), and so the lemma follows upon applying the inverse equivalence $R\Phi_{h}^{-1}$.
\end{proof}

\begin{lemma}\label{simpsdiffer}
If $i$ is a source, the simples of ${\mathcal A}_{\sigma_{i}^{-}h}$ are given by
\begin{displaymath}
E_{j}^{\sigma_{i}^{-}h} = \left\{ \begin{array}{ll} 
E_{i}^{h}[-1] & \textrm{if $i=j$}\\ 
E_{j}^{h} & \textrm{if $i \neq j, i \nrightarrow j$}\\  
{\rm Cone}(E_{i}^{h}[-1] \rightarrow E_{j}^{h}) & \textrm{if $i \rightarrow j$}
\end{array} \right.
\end{displaymath} 
If $i$ is a sink, the simples of ${\mathcal A}_{\sigma_{i}^{+}h}$ are
\begin{displaymath}
E_{j}^{\sigma_{i}^{+}h} = \left\{ \begin{array}{ll} 
E_{i}^{h}[1] & \textrm{if $i=j$}\\ 
E_{j}^{h} & \textrm{if $i \neq j, j \nrightarrow i$}\\  
{\rm Cone}(E_{j}^{h}[-1] \rightarrow E_{i}^{h}) & \textrm{if $j \rightarrow i$}
\end{array} \right.
\end{displaymath} 
Here, when $i \rightarrow j$, $E_{i}^{h}[-1] \rightarrow E_{j}^{h}$ is the non-zero morphism, unique up to scalar, provided by Lemma~\ref{simpext}, and likewise for $E_{j}^{h}[-1] \rightarrow E_{i}^{h}$ when $j \rightarrow i$.

\end{lemma}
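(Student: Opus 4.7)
My plan is to test the proposed simples against the unique characterization from Remark~\ref{character}: an object $Y \in D^{b}_{\widetilde{G}}(\mathbb{P}^{1})$ is isomorphic to $E_{j}^{\sigma_{i}^{-}h}$ precisely when $R\Homg(F_{k}^{\sigma_{i}^{-}h}, Y) \simeq \mathbb{C}$ in degree $0$ for $k=j$ and vanishes for $k \neq j$. Since $\sigma_{i}^{-}h$ differs from $h$ only at the vertex $i$, we have $F_{k}^{\sigma_{i}^{-}h} = F_{k}^{h}$ for $k \neq i$, and these cases can be dispatched directly by applying $R\Homg(F_{k}^{h},-)$ to the triangle defining each candidate $X$ and using Lemma~\ref{simpext} (equivalently, reading off the $k$-th component of $R\Phi_{h}(X)$). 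For $k = i$ the essential input is the Euler triangle \ref{euler},
$$F_{i}^{\sigma_{i}^{-}h} \longrightarrow \bigoplus_{i \to m} F_{m}^{h} \longrightarrow F_{i}^{h},$$
to which we apply $R\Homg(-, X)$, obtaining a triangle that expresses $R\Homg(F_{i}^{\sigma_{i}^{-}h}, X)$ in terms of the two already-computable groups on the left.

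In Case~1, where $X = E_{i}^{h}[-1]$, the left-hand groups are $\mathbb{C}[-1]$ and $0$, so the triangle forces $R\Homg(F_{i}^{\sigma_{i}^{-}h}, X) \simeq \mathbb{C}$ in degree $0$. In Case~2, where $X = E_{j}^{h}$ with $i \nrightarrow j$, both left-hand groups vanish (the $\bigoplus$ because no summand $m$ equals $j$), so $R\Homg(F_{i}^{\sigma_{i}^{-}h}, X) = 0$, as required. Together with the immediate computations for $k \neq i$, this establishes Cases~1 and 2.

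Case~3 is the main obstacle: for $X = \mathrm{Cone}(E_{i}^{h}[-1] \to E_{j}^{h})$ with $i \to j$, both $R\Homg(F_{i}^{h}, X)$ and $\bigoplus_{i \to m} R\Homg(F_{m}^{h}, X)$ turn out to be one-dimensional in degree $0$ (the latter receiving its only nontrivial contribution from $m = j$, by Lemma~\ref{simpext}), and the desired vanishing $R\Homg(F_{i}^{\sigma_{i}^{-}h}, X) = 0$ reduces to checking that the connecting map $\mathbb{C} \to \mathbb{C}$ is an isomorphism. To see this, note that the defining morphism $E_{i}^{h}[-1] \to E_{j}^{h}$ is a non-zero class in the one-dimensional space $\Extg^{1}(E_{i}^{h}, E_{j}^{h})$, so $X$ lies in $\mathcal{A}_{h}$ as a non-split extension $0 \to E_{j}^{h} \to X \to E_{i}^{h} \to 0$. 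Under $R\Phi_{h}$, $R\Phi_{h}(X)$ is therefore the representation of $Q_{h}$ with $\mathbb{C}$ at vertices $i$ and $j$, zero elsewhere, and the arrow $a\colon i \to j$ acting as a non-zero map. The Euler morphism $F_{j}^{h} \to F_{i}^{h}$ corresponds under $R\Phi_{h}$ to a non-zero element of the one-dimensional space $\Hom_{A_{h}}(P_{j}^{h}, P_{i}^{h})$, i.e.\ a non-zero multiple of $a$, so the induced map from $\Hom_{A_{h}}(P_{i}^{h}, R\Phi_{h}(X))$ to $\Hom_{A_{h}}(P_{j}^{h}, R\Phi_{h}(X))$ is precisely the structure map of $R\Phi_{h}(X)$ along $a$, which is non-zero and hence an isomorphism. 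This closes Case~3. The sink case is handled by the identical argument using the Euler triangle \ref{euler2} in place of \ref{euler}, reversing shifts, and using the non-split extension $0 \to E_{i}^{h} \to X \to E_{j}^{h} \to 0$ for the analogue of Case~3.
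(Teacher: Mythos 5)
Your proposal is correct, and its overall skeleton coincides with the paper's: both verify the three candidate objects against the characterizing property of Remark~\ref{character}, handle $k\neq i$ using $F_{k}^{\sigma_{i}^{-}h}=F_{k}^{h}$ together with Lemma~\ref{simpext}, and reduce the case $k=i$ to the Euler triangle \ref{euler}. The genuine divergence is in Case~3, where one must show a certain map $\mathbb{C}\to\mathbb{C}$ is non-zero. The paper applies $R\Homg(F_{i}^{\sigma_{i}^{-}h},-)$ to the triangle $E_{i}^{h}[-1]\to E_{j}^{h}\to X$ and argues that the first map cannot vanish because the collection $\{F_{k}^{\sigma_{i}^{-}h}\}$ generates the derived category (Lemma~\ref{span}), so $R\Homg(\oplus_{k}F_{k}^{\sigma_{i}^{-}h},-)$ is faithful on morphisms. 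You instead apply $R\Homg(-,X)$ to the Euler triangle and identify the relevant component $R\Homg(F_{i}^{h},X)\to R\Homg(F_{j}^{h},X)$ with the structure map of the quiver representation $R\Phi_{h}(X)$ along the arrow $a\colon i\to j$, which is non-zero precisely because $X$ is a non-split extension of $E_{i}^{h}$ by $E_{j}^{h}$. Your route is more concrete and avoids invoking Lemma~\ref{span} for the new height function, at the cost of one small step you leave implicit: you should justify that the $j$-th component $F_{j}^{h}\to F_{i}^{h}$ of the Euler surjection is itself non-zero (e.g.\ if it vanished, $F_{j}^{h}$ would inject into the kernel $F_{i}^{\sigma_{i}^{-}h}$, but $\Homg(F_{j}^{h},F_{i}^{\sigma_{i}^{-}h})=0$ since $H^{0}(\mathcal{O}(-1))=0$). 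With that one-line check added, your argument is complete.
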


\begin{proof}
We give the proof for ${\mathcal A}_{\sigma_{i}^{-}h}$. The argument for
${\mathcal A}_{\sigma_{i}^{+}h}$ is similar.

\medskip

\noindent Claim 1: $E_{i}^{\sigma_{i}^{-}h} = E_{i}^{h}[-1]$.

Up to isomorphism, $E_{i}^{\sigma_{i}^{-}h}$ is the unique object of $D^{b}_{\widetilde{G}}(\mathbb{P}^1)$
such that $R\Homg(F_{k}^{\sigma_{i}^{-}h},E_{i}^{\sigma_{i}^{-}h})=0$ for $k \neq i$ and $R\Homg(F_{i}^{\sigma_{i}^{-}h},E_{i}^{\sigma_{i}^{-}h})\simeq \mathbb{C}$ (see Remark~\ref{character}), so to establish the claim we check that $E_{i}^{h}[-1]$ satisfies these two conditions. For the first condition, note that $R\Homg(F_{k}^{\sigma_{i}^{-}h},E_{i}^{h}[-1]) \simeq
R\Homg(F_{k}^{h},E_{i}^{h})[-1]=0$, since $F_{k}^{\sigma_{i}^{-}h}\simeq F_{k}^{h}$ for $k \neq i$. For the second condition, we need $R\Homg(F_{i}^{\sigma_{i}^{-}h},E_{i}^{h}[-1])\simeq \mathbb{C}$. Applying $R\Homg(-,E_{i}^{h})$ to the sequence $0 \rightarrow F_{i}^{\sigma_{i}^{-}h} \rightarrow \bigoplus_{i \rightarrow k}F_{k}^{h} \rightarrow F_{i}^{h} \rightarrow 0$, we get an exact triangle $$R\Homg(F_{i}^{h},E_{i}^{h}) \rightarrow \bigoplus_{i \rightarrow k}R\Homg(F_{k}^{h},E_{i}^{h}) \rightarrow R\Homg(F_{i}^{\sigma_{i}^{-}h},E_{i}^{h}).$$
Since $R\Homg(F_{i}^{h},E_{i}^{h}) \simeq \mathbb{C}$ and $\bigoplus_{i \rightarrow k}R\Homg(F_{k}^{h},E_{i}^{h})=0$, we have the desired isomorphism $R\Homg(F_{i}^{\sigma_{i}^{-}h},E_{i}^{h}[-1]) \simeq R\Homg(F_{i}^{h},E_{i}^{h}) \simeq \mathbb{C}$. 

\medskip

\noindent Claim 2: $E_{j}^{\sigma_{i}^{-}h}=E_{j}^{h}$ if $i \neq j, i \nrightarrow j$.

As in Claim 1, we check that $E_{j}^{h}$ satisfies the characteristic properties of $E_{j}^{\sigma_{i}^{-}h}$. Since $F_{k}^{\sigma_{i}^{-}h}=F_{k}^{h}$ for $k \neq i$, note that $R\Homg(F_{j}^{\sigma_{i}^{-}h}, E_{j}^{h}) \simeq \mathbb{C}$ and $R\Homg(F_{k}^{\sigma_{i}^{-}h}, E_{j}^{h})=0$ for $k \neq i, j$. It remains to show that $R\Homg(F_{i}^{\sigma_{i}^{-}h}, E_{j}^{h})=0$. Applying $R\Homg(-, E_{j}^{h})$ to sequence \ref{euler} gives us an exact triangle 
 $R\Homg(F_{i}^{h},E_{j}^{h}) \rightarrow \bigoplus_{i \rightarrow k}R\Homg(F_{k}^{h},E_{j}^{h}) \rightarrow R\Homg(F_{i}^{\sigma_{i}^{-}h},E_{j}^{h})$,
whose first two terms and hence last term are zero. 

\medskip

\noindent Claim 3: $E_{j}^{\sigma_{i}^{-}h}={\rm Cone}(E_{i}^{h}[-1] \rightarrow E_{j}^{h})$ if $i \rightarrow j$.

 Let $X={\rm Cone}(E_{i}^{h}[-1] \rightarrow E_{j}^{h})$. We check that $X$ satisfies the characteristic properties of $E_{j}^{\sigma_{i}^{-}h}$. Applying $R\Homg(F_{k}^{\sigma_{i}^{-}h}, -)$ to the exact triangle $E_{i}^{h}[-1] \rightarrow E_{j}^{h} \rightarrow X$ gives the exact triangle 

\begin{equation}\label{charactertriangle}
 \;\;\;\; R\Homg(F_{k}^{\sigma_{i}^{-}h}, E_{i}^{h}[-1]) \rightarrow R\Homg(F_{k}^{\sigma_{i}^{-}h},E_{j}^{h}) \rightarrow R\Homg(F_{k}^{\sigma_{i}^{-}h}, X).
\end{equation}
If $k \neq i,j$, then $F_{k}^{\sigma_{i}^{-}h}=F_{k}^{h}$, the first two terms and hence the last term of the triangle are zero.
if $k=j$, then $F_{k}^{\sigma_{i}^{-}h}=F_{j}^{h}$ and so the first term is zero, giving an isomorphism $ R\Homg(F_{j}^{\sigma_{i}^{-}h}, X) \simeq R\Homg(F_{j}^{h},E_{j}^{h}) \simeq \mathbb{C}$. Finally,
if $k=i$, then applying $R\Homg(-,E_{j}^{h})$ and $R\Homg(-,E_{i}^{h})$ to the sequence \ref{euler} shows that $R\Homg(F_{i}^{\sigma_{i}^{-}h}, E_{i}^{h}[-1]) \simeq \mathbb{C}$ and
$R\Homg(F_{i}^{\sigma_{i}^{-}h},E_{j}^{h}) \simeq \mathbb{C}$ (both concentrated in degree $0$). Thus if the first arrow in the triangle is non-zero,
it must give an isomorphism $R\Homg(F_{i}^{\sigma_{i}^{-}h}, E_{i}^{h}[-1]) \simeq R\Homg(F_{i}^{\sigma_{i}^{-}h},E_{j}^{h})$ and so $R\Homg(F_{k}^{\sigma_{i}^{-}h}, X)=0$, and we shall have verified that $X={\rm Cone}(E_{i}^{h}[-1] \rightarrow E_{j}^{h})$ satisfies the characteristic properties of $E_{j}^{\sigma_{i}^{-}h}$. 

That the first arrow in the triangle \ref{charactertriangle} is indeed non-zero follows from the fact that $\oplus_{k} F_{k}^{\sigma_{i}^{-}h}$ generates the derived category. Indeed,
apply the functor $R\Homg(\oplus_{k} F_{k}^{\sigma_{i}^{-}h}, -)$ to the morphism $E_{i}^{h}[-1] \rightarrow E_{j}^{h}$. We have seen in the course of
our argument that the result is zero when restricted to summands with $k \neq i$. If it were also zero when $k=i$, then the morphism $E_{i}^{h}[-1] \rightarrow E_{j}^{h}$ would have to be zero by faithfulness of $R\Homg(\oplus_{k} F_{k}^{\sigma_{i}^{-}h}, -)$, a contradiction.
\end{proof}

%COTANGENT BUNDLE SECTION

\section{McKay correspondence for $\T$}\label{sectcotanmckay}
We now give the analogue for the cotangent bundle $\T$ of the equivalences $R\Phi_{h}: D^{b}_{\widetilde{G}}(\mathbb{P}^1) \simeq D^{b}(A_{h})$ from Theorem~\ref{kir}.

\medskip
Let $\pi$ be the projection and $s$ the zero-section for $\T$:
\begin{equation*}
\xymatrix{
\T \ar[d]_{\pi}\\
\mathbb{P}^1 \ar@/^{2pc}/[u]^{\quad}_{s}
}
\end{equation*}

Define $\F_{i}^{h}:=\pi^{\ast} F_{i}^{h}$ and $\E_{i}^{h}:=s_{*}E_{i}^{h}$, analogues for $\T$ of the indecomposable projectives and the simples for the heart ${\mathcal A}_{h} \subset D^{b}_{\widetilde{G}}(\mathbb{P}^1)$. Setting $B_{h}:=\End_{G}(\oplus_{i} \F_{i}^{h})^{\rm op}$, consider the natural functor $R\Psi_{h}=R\Homg(\oplus_{i}\F_{i}^{h}, -)$ from the derived category of $\widetilde{G}$-sheaves on $\T$ to the derived category of finitely generated $B_{h}$-modules.

%EQUIVALENCE FOR COTAN
\medskip
The following theorem is analogous to \cite[Proposition 4.1]{tstruc}
\medskip

\begin{theorem} \label{cotan}
For each height function $h$, we have an equivalence

\begin{equation*} 
\xymatrix{
D^{b}_{\widetilde{G}}(\T) \ar[r]^{R\Psi_{h}} & D^{b}(B_{h}). 
}
\end{equation*}
\end{theorem}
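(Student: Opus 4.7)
My plan is to follow the same tilting strategy used for $\mathbb{P}^{1}$ in Theorem~\ref{kir}: show that $\bigoplus_{i \in \Gamma} \F_{i}^{h}$ is a tilting generator of $D^{b}_{\widetilde{G}}(\T)$ with endomorphism algebra $B_{h}$, so that the standard tilting machinery delivers the equivalence $R\Psi_{h} : D^{b}_{\widetilde{G}}(\T) \simeq D^{b}(B_{h})$. The two conditions to verify are vanishing of higher self-$\Ext$s and generation of the derived category. Both should reduce to the corresponding statements for $\mathbb{P}^{1}$ via the adjunction $\pi^{\ast} \dashv \pi_{\ast}$ together with the affineness of the projection $\pi : \T \to \mathbb{P}^{1}$.

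For the $\Ext$-vanishing, exactness of $\pi_{\ast}$ and the projection formula give $\pi_{\ast}\pi^{\ast}F \simeq F \otimes \pi_{\ast}\mathcal{O}_{\T} \simeq \bigoplus_{d \geq 0} F \otimes T^{\otimes d}$ for any coherent $F$ on $\mathbb{P}^{1}$, so by adjunction
$$\Extg^{k}(\F_{i}^{h},\F_{j}^{h}) \;\simeq\; \bigoplus_{d \geq 0} \Extg^{k}(F_{i}^{h}, F_{j}^{h} \otimes T^{\otimes d}).$$
This vanishes for $k=1$ by Lemma~\ref{exts} applied term by term, and for $k \geq 2$ because $\mathbb{P}^{1}$ has cohomological dimension one. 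In particular $B_{h} = \End_{G}(\bigoplus_{i} \F_{i}^{h})^{\rm op}$ is concentrated in degree zero and $R\Psi_{h}(\F_{j}^{h})$ is the indecomposable projective $B_{h}e_{j}$. For generation, suppose $\mathcal{G} \in D^{b}_{\widetilde{G}}(\T)$ satisfies $R\Homg(\F_{i}^{h},\mathcal{G}) = 0$ for every $i$. Adjunction together with affineness of $\pi$ (which makes $R\pi_{\ast} = \pi_{\ast}$) gives $R\Homg(F_{i}^{h}, \pi_{\ast}\mathcal{G}) = 0$ for all $i$, so Lemma~\ref{span} applied in $D({\rm QCoh}_{\widetilde{G}}(\mathbb{P}^{1}))$ forces $\pi_{\ast}\mathcal{G} = 0$, and then conservativity of $\pi_{\ast}$ (once more by affineness) yields $\mathcal{G} = 0$.

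With these two conditions in place, the standard tilting machinery produces the desired equivalence. Concretely, $R\Psi_{h}$ sends the classical generators $\F_{j}^{h}$ to the indecomposable projectives $B_{h}e_{j}$; the $\Ext$-vanishing ensures it induces isomorphisms on $\Hom$-spaces between these, and a standard cone and five-lemma argument then extends this to full faithfulness on all of $D^{b}_{\widetilde{G}}(\T)$. Essential surjectivity follows because the image is triangulated and contains a set of generators of $D^{b}(B_{h})$. The one point that needs a little care is verifying that $R\Psi_{h}(\mathcal{G})$ really lies in $D^{b}(B_{h})$ with finitely generated cohomology: boundedness follows from $R\Gamma(\T,-) = R\Gamma(\mathbb{P}^{1}, \pi_{\ast}-)$ having cohomological dimension one, and finite generation reduces by induction on cone length to the case of the $\F_{j}^{h}$ themselves (using that they classically generate the perfect, equivalently coherent, complexes on the smooth surface $\T$), where the answer is manifestly the finitely generated projective $B_{h}e_{j}$. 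This finite-generation bookkeeping is the main technical nuisance, but it is routine once the tilting conditions are in hand.
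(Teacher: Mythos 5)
Your argument follows the paper's proof almost step for step: the same adjunction/projection-formula computation reducing $\Extg^{k}(\F_{i}^{h},\F_{j}^{h})$ to $\bigoplus_{d\geq 0}\Extg^{k}(F_{i}^{h},F_{j}^{h}\otimes T^{\otimes d})$ handled by Lemma~\ref{exts}, and the same generation argument via Lemma~\ref{span} together with affineness of $\pi$. (You are in fact slightly more careful than the paper in noting that the $k\geq 2$ vanishing comes from $\mathbb{P}^{1}$ having cohomological dimension one, since Lemma~\ref{exts} only addresses $\Ext^{1}$.)

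There is, however, one genuine gap at the very end. What the tilting machinery delivers from your two verified conditions is an equivalence of $D^{b}_{\widetilde{G}}(\T)$ with ${\rm perf}\;B_{h}$, the triangulated subcategory of $D^{b}(B_{h})$ generated by the projectives $B_{h}e_{j}$. Your claim that essential surjectivity onto all of $D^{b}(B_{h})$ ``follows because the image is triangulated and contains a set of generators of $D^{b}(B_{h})$'' silently assumes that these projectives generate $D^{b}(B_{h})$ as a triangulated category, i.e.\ that every finitely generated $B_{h}$-module has finite projective dimension. This is not automatic: $B_{h}$ is an infinite-dimensional graded algebra (it turns out to be the preprojective algebra of the affine diagram $\Gamma$), and for a general Noetherian algebra ${\rm perf}$ is a proper subcategory of $D^{b}$. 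The paper closes exactly this gap by a forward reference: Proposition~\ref{cokoszul} shows that $B_{h}$ is Koszul, hence of finite global dimension by Theorem~\ref{facts}, hence ${\rm perf}\;B_{h}\simeq D^{b}(B_{h})$. You need to supply this (or some equivalent) input; the rest of your proof is sound and matches the paper's.
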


\begin{proof}
Like in the proof of \ref{equiv}, we must check that there are no higher $\Ext$s between the $\F_{i}^{h}$  and that the $\F_{i}^{h}$ generate $D^{b}_{\widetilde{G}}(\T)$. This will establish that $R\Psi_{h}$ gives an equivalence $D^{b}_{G}(\T) \simeq {\rm perf}\; B_{h}$.  In Proposition~\ref{cokoszul}, we shall see that $B_{h}$ is Koszul and hence of finite global dimension, so ${\rm perf}\; B_{h} \simeq D^{b}(B_{h})$.

\medskip

To compute $\Ext$s, use the adjunction $\pi^{\ast} \dashv \pi_{*}$ and the projection formula: 

\begin{align*}
\Extg^{k}(\F_{i}^{h}, \F_{j}^{h}) & = \Extg^{k}( \pi^{\ast}F_{i}^{h},\pi^{\ast}F_{j}^{h}) \simeq \\
\Extg^{k}(F_{i}^{h},\pi_{\ast}\pi^{\ast}F_{j}^{h}) & \simeq \bigoplus_{d \geq 0}\Extg^{k}(F_{i}^{h},\oplus_{k} F_{j}^{h} \otimes T^{\otimes d}),
\end{align*}
where $T$ denotes the tangent bundle of $\mathbb{P}^1$. Each summand on the right is zero by Lemma~\ref{exts}, so indeed
we have vanishing of the higher $\Ext$s.

Next we establish generating. Suppose that we have $\G \in D^{b}_{\widetilde{G}}(\T)$ such that $R\Homg(\pi^{\ast} F_{i}^{h}, \G)=0$ for all $i$. Applying the adjunction, we have 
$R\Homg(F_{i}^{h}, R\pi_{\ast} \G)=0$ for all $i$, so by Lemma~\ref{span} above, $R\pi_{\ast} \G=0$. But $\pi$ is an affine map, so $\pi_{\ast}$ is exact and has no kernel, hence $\G=0$. 
\end{proof}

\begin{rem}
Note that our algebra $B_{h} \simeq \bigoplus_{i,j,d}\Homg(F_{i}^{h},F_{j}^{h}\otimes T^{\otimes d})^{\rm op}$ is graded by the difference $h(j) + 2d - h(i)$. Since $F_{i}^{h} = \mathcal{O}(h_{i}) \otimes W_{i}$
and $F_{j}^{h}\otimes T^{\otimes d} \simeq \mathcal{O}(h(j) + 2d) \otimes W_{j}$, there is an isomorphism $\Homg(F_{i}^{h},F_{j}^{h}\otimes T^{\otimes d}) \simeq \Homg(W_{i}, \mathcal{O}(h(j) + 2d - h(j)) \otimes W_{j})$, so
the degree zero part of the algebra $B_{h}$ is just $B_{0}=\bigoplus_{i} \Homg(F_{i}^{h}, F_{i}^{h})$,
which is a commutative semisimple $\mathbb{C}$-algebra with one summand for each $i$. 
\end{rem}

As in Section~\ref{affinecase}, we shall apply Koszul duality to understand the graded algebra $B_{h}$. For this,
we need to compute some $\Ext$s, which we shall do using the following lemma from \cite[pg. 20]{tstruc}.

\begin{lemma}\label{push}
For $\F, \G \in D^{b}_{\widetilde{G}}(\mathbb{P}^1)$ we have 
\begin{equation*}
\Extg^{k}(s_{*}\F,s_{*}\G)\simeq \Extg^{k}(\F,\G)\oplus\Extg^{2-k}(\G,\F)^{*}
\end{equation*}
\end{lemma}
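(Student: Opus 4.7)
The plan is to compute $Ls^{*}s_{*}\F$ explicitly via the Koszul resolution of the zero section and then apply the adjunction $Ls^{*} \dashv s_{*}$ together with Serre duality on $\mathbb{P}^{1}$.

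First I would write down the Koszul resolution of the regular embedding $s: \mathbb{P}^{1} \hookrightarrow \T$. Since the conormal bundle of the zero section is $\Omega_{\mathbb{P}^{1}} \simeq T^{\vee}$, and the total space is the rank one bundle $\T$, the Koszul resolution is the two-term complex
$$0 \to \pi^{*}T \to \mathcal{O}_{\T} \to s_{*}\mathcal{O}_{\mathbb{P}^{1}} \to 0,$$
which is canonical and hence $\widetilde{G}$-equivariant. The differential is contraction with the tautological $1$-form, which by construction vanishes on the zero section. Therefore $Ls^{*}$ of the differential is zero, giving the direct sum decomposition
$$Ls^{*}s_{*}\mathcal{O}_{\mathbb{P}^{1}} \simeq \mathcal{O}_{\mathbb{P}^{1}} \oplus T[1].$$
Combining this with the projection formula $s_{*}\F \simeq \pi^{*}\F \otimes s_{*}\mathcal{O}_{\mathbb{P}^{1}}$ and the identity $s^{*}\pi^{*} = \mathrm{id}$, I obtain $Ls^{*}s_{*}\F \simeq \F \oplus (\F \otimes T)[1]$.

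Next, apply the adjunction to get
$$R\Homg(s_{*}\F, s_{*}\G) \simeq R\Homg(Ls^{*}s_{*}\F, \G) \simeq R\Homg(\F,\G) \oplus R\Homg(\F \otimes T,\G)[-1].$$
The final step is to rewrite the second summand using Serre duality on $\mathbb{P}^{1}$: we have $\Extg^{j}(\F \otimes T, \G) \simeq \Extg^{1-j}(\G, \F \otimes T \otimes \omega)^{*}$, and since $T \otimes \omega \simeq \mathcal{O}_{\mathbb{P}^{1}}$, this simplifies to $\Extg^{1-j}(\G, \F)^{*}$. Taking $j=k-1$ and reading off the piece in degree $k$ yields $\Extg^{2-k}(\G,\F)^{*}$, which assembles into the stated isomorphism.

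The only subtle point to verify is the splitting $Ls^{*}s_{*}\mathcal{O}_{\mathbb{P}^{1}} \simeq \mathcal{O}_{\mathbb{P}^{1}} \oplus T[1]$ as a direct sum rather than a merely triangulated decomposition; this is automatic because the pulled-back Koszul differential is literally the zero map, so the two-term complex is formal. Everything is manifestly $\widetilde{G}$-equivariant since the Koszul resolution, the projection formula, and Serre duality all have natural equivariant refinements (with $\omega$ trivial as a $G$-sheaf), so no extra work is required to upgrade the computation to the equivariant derived category.
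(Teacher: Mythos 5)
Your proof is correct, and it is essentially the standard argument: the paper itself gives no proof of this lemma but simply quotes it from Bridgeland \cite[pg.~20]{tstruc}, where the same computation (Koszul resolution of the zero section, $Ls^{*}s_{*}\F \simeq \F \oplus (\F\otimes N^{\vee})[1]$, adjunction, Serre duality with $N\otimes\omega$ trivial) is carried out for the zero section of a canonical bundle. One terminological slip: the \emph{normal} bundle of the zero section of $\T$ is $\Omega_{\mathbb{P}^{1}}$ and the \emph{conormal} bundle is $T$; you have these reversed in the justifying sentence, but the Koszul resolution you actually write down, $0 \to \pi^{*}T \to \mathcal{O}_{\T} \to s_{*}\mathcal{O}_{\mathbb{P}^{1}} \to 0$, is the correct one (its first term is the ideal sheaf, restricting to the conormal bundle $T$), so the rest of the argument, including the crucial cancellation $T\otimes\omega\simeq\mathcal{O}$, goes through as stated.
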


In particular, the lemma allows us to compute $\Ext$s between the objects $\E_{i}^{h}=s_{\ast}E_{i}^{h}$.

\begin{propo}\label{spher}
Let $h$ be a height function on $\Gamma$ and set $\E_{i}^{h}=s_{\ast}E_{i}^{h}$. Then we have
\begin{align*}
\Homg(\E_{i}^{h},\E_{j}^{h}) & \simeq \Homg(E_{i}^{h},E_{j}^{h})\\ 
\Extg^{1}(\E_{i}^{h},\E_{j}^{h}) & \simeq \Extg^{1}(E_{i}^{h},E_{j}^{h}) \oplus \Extg^{1}(E_{j}^{h},E_{i}^{h})^{*}\\
\Extg^{2}(\E_{i}^{h},\E_{j}^{h})  & \simeq \Extg^{2}(E_{j}^{h},E_{i}^{h})^{*}
\end{align*}
For any height function $h$, the $\E_{i}^{h}$ form a $\Gamma$-configuration of spherical objects. \end{propo}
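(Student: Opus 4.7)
The plan is to obtain the three Ext formulas as a direct application of Lemma~\ref{push}, using the fact that $\mathbb{P}^1$ has cohomological dimension $1$, so that $\Extg^k(\F,\G)=0$ for all $\F,\G \in D^b_{\widetilde{G}}(\mathbb{P}^1)$ and $k \geq 2$. Substituting $\F = E_i^h$ and $\G = E_j^h$ into Lemma~\ref{push} gives
\begin{align*}
\Homg(\E_i^h,\E_j^h) &\simeq \Homg(E_i^h,E_j^h)\oplus \Extg^2(E_j^h,E_i^h)^*,\\
\Extg^1(\E_i^h,\E_j^h) &\simeq \Extg^1(E_i^h,E_j^h)\oplus \Extg^1(E_j^h,E_i^h)^*,\\
\Extg^2(\E_i^h,\E_j^h) &\simeq \Extg^2(E_i^h,E_j^h)\oplus \Homg(E_j^h,E_i^h)^*,
\end{align*}
and the $\Extg^2$ terms on $\mathbb{P}^1$ drop out, immediately yielding the three displayed isomorphisms (with the understanding that the last one reduces to $\Homg(E_j^h,E_i^h)^*$, which vanishes whenever $i\neq j$).

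Next I would verify the $\Gamma$-configuration property by feeding the Ext computations for the simples $E_i^h$ of $\mathcal{A}_h$ supplied by Lemma~\ref{simpext} into these formulas. For $i=j$: $\Homg(\E_i^h,\E_i^h)\simeq \Homg(E_i^h,E_i^h) \simeq \mathbb{C}$, $\Extg^1(\E_i^h,\E_i^h)$ is a sum of two copies of $\Extg^1(E_i^h,E_i^h) = 0$, hence $0$, and $\Extg^2(\E_i^h,\E_i^h) \simeq \Homg(E_i^h,E_i^h)^* \simeq \mathbb{C}$. For $i \neq j$: $\Homg(\E_i^h,\E_j^h) \simeq \Homg(E_i^h,E_j^h) = 0$ and $\Extg^2(\E_i^h,\E_j^h) = 0$, while
\[
\dim \Extg^1(\E_i^h,\E_j^h) = \dim \Extg^1(E_i^h,E_j^h) + \dim \Extg^1(E_j^h,E_i^h),
\]
which by Lemma~\ref{simpext} is $1$ precisely when $i$ and $j$ are joined by an edge in the underlying diagram $\Gamma$ (exactly one of the two orientations $i\to j$, $j\to i$ occurs in $Q_h$), and $0$ otherwise. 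Since $\Gamma$ is simply laced, this matches the number of edges between $i$ and $j$.

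Finally, for sphericity I only need to promote the dimension count $\dim \Extg^\bullet(\E_i^h,\E_i^h) = (1,0,1)$ to a graded ring isomorphism with $H^\bullet(S^2,\mathbb{C})$. Since the algebra is concentrated in degrees $0$ and $2$ with one-dimensional graded pieces, the multiplicative structure is forced: the degree $0$ summand is spanned by the identity, and there is no room for nontrivial relations. This identifies $\Extg^\bullet(\E_i^h,\E_i^h)$ with $\mathbb{C}[x]/(x^2)$ where $\deg x = 2$, which is $H^\bullet(S^2,\mathbb{C})$. The only mildly delicate point in the whole argument is keeping the book-keeping straight in the asymmetric duality of Lemma~\ref{push} (which swaps the arguments in the second summand), so that the count of $\Extg^1$ via the two orientations $i\to j$ and $j\to i$ assembles into the single undirected edge count of $\Gamma$; everything else is direct substitution.
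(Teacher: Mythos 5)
Your argument is correct and follows the paper's own proof: both apply Lemma~\ref{push} and then feed in the $\Ext$-computations of Lemma~\ref{simpext} for the simples of $\mathcal{A}_h$ to verify the $\Gamma$-configuration axioms (you additionally spell out the graded ring structure needed for sphericity, and you rightly observe that what Lemma~\ref{push} produces in degree $2$ is $\Homg(E_j^h,E_i^h)^{*}$). The one imprecision is your blanket claim that $\Extg^{k}(\F,\G)=0$ for all $\F,\G\in D^{b}_{\widetilde{G}}(\mathbb{P}^1)$ and $k\geq 2$: this fails for arbitrary complexes (e.g.\ $\F=\G[2]$ gives $\Extg^{2}(\F,\G)\simeq\Homg(\G,\G)\neq 0$); the correct justification is that the $E_i^h$ lie in the heart $\mathcal{A}_h\simeq \mathbb{C}Q_h\mbox{-mod}$ of a hereditary algebra, or simply that Lemma~\ref{simpext} explicitly gives the vanishing of $\Extg^{2}$ between these particular objects, which is all you use, so the proof stands.
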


\begin{proof}
The three isomorphisms are just Lemma~\ref{push}. 

To see that the $\E_{i}^{h}$ form a $\Gamma$-configuration, note that $\Homg(\E_{i}^{h},\E_{i}^{h}) \simeq \Extg^{2}(\E_{i}^{h},\E_{i}^{h})^{*} \simeq \mathbb{C}$ and $\Extg^{1}(\E_{i}^{h},\E_{i}^{h}) =0$ by Lemma~\ref{simpext} together with the three isomorphisms. Thus $\E_{i}^{h}$ is indeed spherical. One can see in the same way that $\Extg^{1}(\E_{i}^{h},\E_{j}^{h}) \simeq \mathbb{C}$ exactly when $i$ and $j$ are connected in $Q$ and that $\Homg(\E_{i}^{h},\E_{j}^{h}) = \Extg^{2}(\E_{i}^{h},\E_{j}^{h})=0$ when $i \neq j$.
\end{proof}

%KOSZUL FOR COTAN, HENCE PREPROJECTIVE

\begin{propo}\label{cokoszul}
$B_{h}$ is Koszul with Koszul dual $E(B_{h}) \simeq \Extg^{\bullet}(\oplus_{i} \E_{i}^{h},\oplus_{i} \E_{i}^{h})$, the $\Ext$ algebra of the spherical $\Gamma$-collection $\E_{i}^{h}$. Thus by Lemma~\ref{lemmagammaconfigs}, there is an isomorphism $B_{h} \simeq \Pi_{\Gamma}$ with the preprojective algebra of $\Gamma$.
\end{propo}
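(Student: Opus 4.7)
The plan is to invoke the machinery set up in Remark~\ref{useful}: once we show that $E(B_h)$ is the $\Ext$-algebra of a $\Gamma$-configuration of spherical objects, both the Koszul property and the identification $B_h \simeq \Pi_\Gamma$ will follow from Lemma~\ref{lemmagammaconfigs}.

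First I would identify the images of the spherical objects $\E_i^h = s_{*} E_i^h$ under the equivalence $R\Psi_h$ of Theorem~\ref{cotan}. Using the adjunction $\pi^{*} \dashv \pi_{*}$ together with $\pi \circ s = {\rm id}_{\mathbb{P}^1}$,
\[
R\Homg(\F_k^h, \E_i^h) \simeq R\Homg(F_k^h, \pi_{*} s_{*} E_i^h) \simeq R\Homg(F_k^h, E_i^h),
\]
and by Remark~\ref{character} this is $\mathbb{C}$ concentrated in degree zero when $k=i$ and vanishes otherwise. Hence $R\Psi_h(\E_i^h)$ is the simple $B_h$-module $S_i^h$ at the vertex $i$, so $R\Psi_h(\bigoplus_i \E_i^h) \simeq B_{h,0}$. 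Since $R\Psi_h$ is a derived equivalence, it induces a graded algebra isomorphism
\[
E(B_h) = \Ext^{\bullet}_{B_h}(B_{h,0}, B_{h,0}) \simeq \Extg^{\bullet}(\textstyle\bigoplus_i \E_i^h, \bigoplus_i \E_i^h).
\]

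By Proposition~\ref{spher} the $\E_i^h$ form a $\Gamma$-configuration of spherical objects in the $2$-Calabi-Yau subcategory of $D^b_{\widetilde{G}}(\T)$ supported along the zero section. Lemma~\ref{lemmagammaconfigs}(3) thus identifies $E(B_h)$ with the $\Ext$-algebra of the universal $\Gamma$-configuration in $D^b_G(\mathbb{C}^2)$, and Lemma~\ref{lemmagammaconfigs}(2) asserts this algebra is Koszul with Koszul dual $\Pi_\Gamma$.

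It remains to check that $B_h$ itself is Koszul, i.e.\ that $E(B_h)$ is generated in degree one. Since $E(B_h)$ is concentrated in degrees $0, 1, 2$, the only nontrivial case is surjectivity onto the degree-two piece: when $i \neq k$ we have $\Extg^{2}(\E_i^h, \E_k^h) = 0$ from the $\Gamma$-configuration conditions, while for $i = k$ the nondegenerate Serre pairings $\Extg^{1}(\E_j^h, \E_i^h) \otimes \Extg^{1}(\E_i^h, \E_j^h) \to \Extg^{2}(\E_i^h, \E_i^h)$, summed over $j$ adjacent to $i$, surject onto the one-dimensional target. Hence $B_h$ is Koszul, and Theorem~\ref{facts}(2) yields $B_h \simeq E(E(B_h)) \simeq E(B_h)^{!\,{\rm op}} \simeq \Pi_\Gamma^{\rm op} \simeq \Pi_\Gamma$, the last isomorphism using that the preprojective algebra is self-opposite. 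The main obstacle is really the first step---confirming that $R\Psi_h$ sends $\E_i^h$ to the simple $S_i^h$---once this is in hand everything else is a formal application of Koszul duality together with Lemma~\ref{lemmagammaconfigs}.
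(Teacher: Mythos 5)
Your proof is correct and follows essentially the same route as the paper: both identify $R\Psi_h(\E_i^h)$ with the simple module at vertex $i$ (you via $\pi^*\dashv\pi_*$ and $\pi\circ s=\mathrm{id}$, the paper via $Ls^*\dashv s_*$, which is the same computation), deduce $E(B_h)\simeq\Extg^\bullet(\oplus_i\E_i^h,\oplus_i\E_i^h)$, and then invoke Lemma~\ref{lemmagammaconfigs} and Koszul duality. Your explicit check that $E(B_h)$ is generated in degree one via the Serre pairing, and your care with the opposite-algebra issue, are slightly more detailed than the paper's appeal to Remark~\ref{useful}, but not a different argument.
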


\begin{proof}
From the adjunction $s^{*} \dashv Ls_{*}$, we see that 
$$R\Homg(\pi^{*}F_{i}^{h}, s_{*}E_{j}^{h}) \simeq R\Homg(Ls^{*}\pi^{*}F_{i}^{h}, E_{j}^{h}) \simeq
R\Homg(F_{i}^{h}, E_{j}^{h}).$$
By the remarks before Lemma~\ref{simpext}, the right hand side is zero when $i \neq j$ and is $1$-dimensional and concentrated in degree
zero when $i=j$. Thus we see that $R\Psi_{h}(\E_{i}^{h}) \simeq e_{i}B_{h}e_{i}$, the $i$th simple of the algebra $B_{h}$.  Since $R\Psi_{h}$ is an equivalence, 

$$E(B_{h})=\Ext^{\bull}_{B_{h}}(B_{0},B_{0}) \simeq \Extg^{\bull}(\oplus_{i} \E_{i}^{h}, \oplus_{i} \E_{i}^{h}).$$  

Thus $E(B_{h})$ is the $\Ext$-algebra of a $\Gamma$-configuration
and by Lemma~\ref{lemmagammaconfigs}, there is an isomorphism $B_{h} \simeq \Pi_{\Gamma}$ and $B_{h}$ is Koszul.
\end{proof}

\begin{rem}
Putting together the equivalences of Theorems~\ref{cotan} and \ref{equiv} and the isomorphisms $B_{h} \simeq \Pi_{\Gamma}$, we see that for each height function $h$ there is  a chain of equivalences
$$D^{b}_{\widetilde{G}}(\T) \simeq D^{b}(B_{h}) \simeq D^{b}(\Pi_{\Gamma}) \simeq D^{b}_{G}(\mathbb{C}^{2}),$$
which provides a bridge between the projective McKay correspondence of \cite{kir} and the usual McKay correspondence for $\mathbb{C}^2$.
\medskip

As pointed out by Khovanov-Huerfano \cite{catadj}, a single equivalence $D^{b}_{\widetilde{G}}(\T) \simeq D^{b}_{G}(\mathbb{C}^{2})$ can be obtained by noting that there is an isomorphism of resolutions $\widehat{Y} \rightarrow \T/{\widetilde{G}}$ and $\widehat{X} \rightarrow {\mathbb C}^{2}/G$. Applying the celebrated theorem of Bridgeland-King-Reid \cite{bkr} then gives equivalences
$$D^{b}_{\widetilde{G}}(\T)\simeq D^{b}(Y) \simeq D^{b}(Y^{\prime}) \simeq D^{b}_{G}(\mathbb{C}^{2}).$$
\end{rem}

%REFLECTION FUNCTORS AND SPHERICAL TWISTS

\section{Reflection functors and spherical twists}\label{sectreflectfunctors}

One of the most interesting aspects of Kirillov's paper \cite{kir} is that the equivalences $R\Phi_{h}$ for different $h$ are related by the reflection functors of Bernstein-Gelfand-Ponamarev \cite{bgp}.
We show that in terms of $D^{b}_{\widetilde{G}}(\mathbb{P}^{1})$, the reflection functors amount to tilting at a simple object. 
On $D^{b}_{\widetilde{G}}(\T)$ the reflection functors are replaced by spherical twists which relate the various equivalences
$R\Psi_{h}$. We also note that the action of the twist can be described in terms of tilting at a simple object. This completes our description of the relation betweeen the McKay correspondences for $\T$ and $\mathbb{P}^1$ as outlined in the table from the introduction.

%REFLECTION FUNCTORS

\subsection{Reflection functors}

Recall from Section~\ref{sectprojmckay} that if $i$ is a sink in a quiver $Q$,
we define a new quiver $\sigma_{i}^{+}Q$ by reversing all arrows adjacent to $i$ so that it becomes a source. Likewise, if $i$ is a source, we
define $\sigma_{i}^{-}Q$ so that $i$ becomes a sink.

Accompanying these operations on quivers are the {\it reflection functors} of Bernstein-Gelfand-Ponomarev \cite{bgp}
\begin{equation*}
\xymatrix{
{\rm Rep}\, Q \ar[r]^{\sigma_{i}^{+}} & {\rm Rep}\, {\sigma_{i}^{+}}Q &  {\rm and} & {\rm Rep}\, Q \ar[r]^{\sigma_{i}^{-}} & {\rm Rep}\, {\sigma_{i}^{-}}Q.
}
\end{equation*}
In the first case, given a sink $i$ in $Q$ and representation $V$, define $\sigma_{i}^{+}V$ to be the same as $V$ away from $i$, and at $i$ replace $V_{i}$
with the kernel of the natural morphism $\bigoplus_{j \rightarrow i} V_{j} \rightarrow V_{i}$. The arrows from $(\sigma_{i}^{+}V)_{i}$ to adjacent $V_{j}$ are given by the composition $(\sigma_{i}^{+}V)_{i} \hookrightarrow  \bigoplus_{j \rightarrow i} V_{j} \rightarrow V_{j}$. This defines the functor on objects and its definition on morphisms is the obvious one. Likewise, if $i$ is a source, $\sigma_{i}^{-}$ does nothing away from $i$, and at $i$ replace $V_{i}$ with the cokernel of the morphism
$V_{i} \rightarrow \bigoplus_{i \rightarrow j} V_{j}$. The arrows from adjacent $V_{j}$ to $(\sigma_{i}^{-}V)_{i}$ are given by the composition
$V_{j} \hookrightarrow \bigoplus_{i \rightarrow j} V_{j} \rightarrow (\sigma_{i}^{-}V)_{i}$ and the definition of the functor on morphisms is obvious.

We record some basic and well-known facts about the reflection functors.

\begin{lemma} 
\begin{enumerate}
\item The functor $\sigma_{i}^{+}$ is left exact, while $\sigma_{i}^{-}$ is right exact, and we have an adjunction $\sigma_{i}^{-} \dashv \sigma_{i}^{+}$.
\item The derived functors $R\sigma_{i}^{+}$ and $L\sigma_{i}^{-}$ are inverse equivalences. Identifying the Grothendieck groups of $Q$ and $\sigma_{i}^{\pm}Q$ using the bases of simple representations, the automorphisms of the Grothendieck group induced by $R\sigma_{i}^{+}$ and $L\sigma_{i}^{-}$ are simply reflections at the $i$th simple.
\end{enumerate}
\end{lemma}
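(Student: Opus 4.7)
My plan is, for part (1), to read off the exactness claims directly from the defining formulas: $\sigma_i^+$ acts as the identity away from $i$ and as a kernel construction at $i$, so it is left exact; dually $\sigma_i^-$ uses a cokernel at $i$ and is right exact. The adjunction $\sigma_i^- \dashv \sigma_i^+$ I would prove by unpacking the two $\Hom$ sets. A morphism $V \to \sigma_i^+ W$ consists of linear maps $V_k \to W_k$ for $k \neq i$ together with a map $V_i \to \bigoplus_{j \to i} W_j$ whose composition with $\bigoplus_{j \to i} W_j \to W_i$ vanishes; dualizing via the universal property of the cokernel at $i$ translates this data precisely into a morphism $\sigma_i^- V \to W$, and the bijection is plainly natural in both variables.

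For part (2), the strategy is first to compute $R\sigma_i^+$ and $L\sigma_i^-$ on the simples, then to bootstrap to inverseness on all of $D^b$. Because $\sigma_i^+$ is exact away from $i$ and has cohomological dimension at most one, with $R^1\sigma_i^+ V$ the cokernel of $\bigoplus_{j \to i} V_j \to V_i$ sitting at $i$, a direct computation on simples of $Q$ yields
$$R\sigma_i^+ S_i^Q \simeq S_i^{\sigma_i^+ Q}[-1], \qquad R\sigma_i^+ S_j^Q \simeq \sigma_i^+ S_j^Q \text{ concentrated in degree } 0 \text{ for } j \neq i,$$
with $[\sigma_i^+ S_j^Q] = [S_j^{\sigma_i^+ Q}] + n_{ij}\,[S_i^{\sigma_i^+ Q}]$ where $n_{ij}$ counts arrows between $i$ and $j$. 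Dually for $L\sigma_i^-$. Combined with the adjunction of part (1), this makes the unit and counit quasi-isomorphisms on a generating set, hence everywhere, establishing the derived equivalence.

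For the Grothendieck-group assertion, the above computations read
$$[R\sigma_i^+ S_i^Q] = -[S_i^{\sigma_i^+ Q}], \qquad [R\sigma_i^+ S_j^Q] = [S_j^{\sigma_i^+ Q}] + n_{ij}\,[S_i^{\sigma_i^+ Q}] \text{ for } j \neq i.$$
Under the identification $K_0(Q) \simeq K_0(\sigma_i^+ Q)$ sending $[S_k^Q] \mapsto [S_k^{\sigma_i^+ Q}]$, this is exactly the simple reflection $s_i(\alpha_j) = \alpha_j - C_{ij}\alpha_i$ for the affine Cartan matrix $C_{ij} = 2\delta_{ij} - n_{ij}$ associated to $\Gamma$, which settles the claim.

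The hardest step will be the derived computation just sketched, since one must track $R\sigma_i^+ S_i^Q$ living in a different cohomological degree from the remaining $R\sigma_i^+ S_j^Q$ and check that the counit $L\sigma_i^- R\sigma_i^+ \to \mathrm{Id}$ nevertheless glues to a quasi-isomorphism on complexes mixing both types of composition factors. The cleanest route is the classical BGP indecomposable argument: every indecomposable of $Q$ other than $S_i^Q$ itself has no $S_i^Q$ quotient (since $i$ is a sink), so $\sigma_i^+$ is already exact on it and the naive $\sigma_i^- \sigma_i^+$ is literally the identity, while on $S_i^Q$ the functor produces a single shift; this furnishes the derived inverseness on a generating set of indecomposables and hence on all of $D^b$.
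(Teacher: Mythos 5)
Your argument is correct, but note that the paper does not prove this lemma at all: it is stated as a record of ``basic and well-known facts'' with a citation to Bernstein--Gelfand--Ponomarev, so any proof you give is necessarily your own route. What you have written is a faithful reconstruction of the classical BGP argument, and the key points are all present and right: left/right exactness from the kernel/cokernel constructions at $i$; the adjunction via the universal properties (where the map $V_i \to \bigoplus_{j\to i} W_j$ is really forced by compatibility with the arrows out of the source $i$, so it is a condition rather than extra data --- worth phrasing carefully); the computation $R\sigma_i^+ S_i \simeq S_i[-1]$ and $[\sigma_i^+ S_j] = [S_j] + n_{ij}[S_i]$ giving the simple reflection for the Cartan matrix $2\delta_{ij}-n_{ij}$; and the inverseness via the standard fact that for an indecomposable $M \neq S_i$ at a sink $i$ the map $\bigoplus_{j\to i}M_j \to M_i$ is surjective (equivalently $M$ has no $S_i$ quotient), so that $\sigma_i^-\sigma_i^+M \cong M$ on the nose and the counit is an isomorphism on a generating set of $D^b$. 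The only step you gloss over is the identification of $R\sigma_i^+$ with the two-term complex $[\bigoplus_{j\to i}V_j \to V_i]$ at the vertex $i$ (i.e.\ that this functorial complex really computes the classical right derived functor); this is standard for hereditary path algebras, and the quivers $Q_h$ here are acyclic since height strictly decreases along arrows, so Krull--Schmidt plus hereditariness lets you reduce to indecomposables exactly as you say. In short: the proposal supplies a complete proof where the paper supplies only a reference.
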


In the case of a Dynkin diagram, the functors thus generate the action of the Weyl group on the root lattice, which we identify with $K_{0}$ of the quiver.

Theorem 8.9 in \cite{kir} gives the relation between the equivalences $R\Phi_{h}$ for different height functions in terms of reflection functors.

%KIRILLOV'S REFLECTION THEOREM

\begin{theorem}\label{reflect}
We have a commutative diagram of equivalences
\begin{equation*}
\xymatrix{& D^{b}_{\widetilde{G}}(\mathbb{P}^{1}) \ar[dl]_{R\Phi_{h}} \ar[dr]^{R\Phi_{\sigma_{i}^{-}h}}\\
D^{b}(Q_{h}) \ar[rr]^{L\sigma_{i}^{-}} && D^{b}(Q_{\sigma_{i}^{-}h}).
}
\end{equation*}
Likewise, we have $R\sigma_{i}^{+}\circ R\Phi_{h} \simeq R\Phi_{\sigma_{i}^{+}h}$.
\end{theorem}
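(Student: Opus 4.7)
The plan is to prove $R\Phi_{\sigma_i^-h} \simeq L\sigma_i^- \circ R\Phi_h$ by checking that the composition $G := R\Phi_{\sigma_i^-h} \circ R\Phi_h^{-1}: D^b(Q_h) \to D^b(Q_{\sigma_i^-h})$ coincides with $L\sigma_i^-$ on the simple modules $S_j^h$, and then promoting this matching to a natural isomorphism of triangulated functors.

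First I would compute $G(S_j^h) = R\Phi_{\sigma_i^-h}(E_j^h)$ by using Lemma~\ref{simpsdiffer} to rewrite each $E_j^h$ in terms of the simples $E_k^{\sigma_i^-h}$ of $\mathcal{A}_{\sigma_i^-h}$, which $R\Phi_{\sigma_i^-h}$ sends to $S_k^{\sigma_i^-h}$. For $j = i$, the identity $E_i^h = E_i^{\sigma_i^-h}[1]$ gives $G(S_i^h) \simeq S_i^{\sigma_i^-h}[1]$; for $j \neq i$ with $i \nrightarrow j$, the identity $E_j^h = E_j^{\sigma_i^-h}$ gives $G(S_j^h) \simeq S_j^{\sigma_i^-h}$; and for $i \rightarrow j$, applying $R\Phi_{\sigma_i^-h}$ to the defining triangle $E_i^h[-1] \rightarrow E_j^h \rightarrow E_j^{\sigma_i^-h}$ yields a triangle $S_i^{\sigma_i^-h} \rightarrow G(S_j^h) \rightarrow S_j^{\sigma_i^-h}$, realizing $G(S_j^h)$ as the unique non-split extension of $S_i^{\sigma_i^-h}$ by $S_j^{\sigma_i^-h}$ in $D^b(Q_{\sigma_i^-h})$.

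Next I would compute $L\sigma_i^- S_j^h$ directly from the definition of $\sigma_i^-$ as a right exact functor and its left derivation. The three cases mirror those above: $\sigma_i^- S_i^h = 0$ while $L^1\sigma_i^- S_i^h = S_i^{\sigma_i^-h}$ (the kernel of $\mathbb{C} \rightarrow 0$), producing $L\sigma_i^- S_i^h \simeq S_i^{\sigma_i^-h}[1]$; when $j \neq i$ and $i \nrightarrow j$, $\sigma_i^-$ acts trivially on $S_j^h$; and when $i \rightarrow j$, $\sigma_i^- S_j^h$ is the representation of $Q_{\sigma_i^-h}$ with $\mathbb{C}$ at both $i$ and $j$ and identity along the new arrow $j \rightarrow i$, which is precisely the non-split extension of $S_i^{\sigma_i^-h}$ by $S_j^{\sigma_i^-h}$. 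These outputs match those of $G$ case by case.

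Finally I would upgrade this object-wise agreement to a natural isomorphism. Both $G$ and $L\sigma_i^-$ are triangulated equivalences, and transporting Lemma~\ref{simpext} along $R\Phi_h$ shows that $\Hom$ and $\Ext^1$ between simples in $D^b(Q_h)$, and similarly in $D^b(Q_{\sigma_i^-h})$, are at most one dimensional, so the induced maps on morphisms between simples must agree up to scalars that can be absorbed into the object-level isomorphisms. The second assertion $R\sigma_i^+ \circ R\Phi_h \simeq R\Phi_{\sigma_i^+h}$ follows either by a symmetric argument using the sink case of Lemma~\ref{simpsdiffer} or by invoking the quasi-inverse relationship between $R\sigma_i^+$ and $L\sigma_i^-$ together with the first statement applied to $\sigma_i^+ h$. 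The main subtlety is the naturality upgrade in this last step; while it is essentially automatic given the sparsity of morphism spaces between simples, making it fully rigorous requires either a DG enhancement argument or an explicit choice of compatible bases for the one-dimensional $\Ext^1$ spaces between adjacent simples.
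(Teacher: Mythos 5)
First, a structural point: the paper does not prove this theorem at all --- it is imported verbatim from Kirillov (\cite{kir}, Theorem 8.9), so there is no in-paper proof to measure your argument against. Judged on its own, your object-level computations are correct: the three cases for $G(S_j^h)=R\Phi_{\sigma_i^-h}(E_j^h)$ follow from Lemma~\ref{simpsdiffer}, and your description of $L\sigma_i^-$ on the simples (including the representation with $\mathbb{C}$ at $i$ and $j$ and an identity arrow, in the case $i\rightarrow j$) matches what the paper itself records later in the proof of Proposition~\ref{projtilt}.

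The genuine gap is exactly the step you flag and then wave through. An exact equivalence of triangulated categories is not determined, even up to unnatural isomorphism, by its values on a set of objects that merely generates the category; the simples $S_j^h$ do not form a tilting object (they have nontrivial $\Ext^1$'s among themselves), so agreement of $G$ and $L\sigma_i^-$ on the $S_j^h$ together with ``agreement up to scalar'' on the one-dimensional $\Hom$ and $\Ext^1$ spaces does not yield $G\simeq L\sigma_i^-$. Nor is the rescaling harmless: in affine type $A$ the quiver contains a cycle, $\mathbb{C}Q_h$ has outer automorphisms rescaling the arrows around that cycle, and two equivalences agreeing on objects can genuinely differ by such a twist; no DG-enhancement argument is actually supplied. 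The clean way to close the gap is to compare the functors on the projective generator rather than on the simples. The collections $\{F_j^h\}$ and $\{F_j^{\sigma_i^-h}\}$ differ only at the vertex $i$, where they are related by the Euler sequence \ref{euler}; using Lemma~\ref{exts} one checks that $T:=R\Phi_h\bigl(\oplus_j F_j^{\sigma_i^-h}\bigr)$ is a module concentrated in degree zero, namely the APR tilting module $\ker\bigl(\oplus_{i\rightarrow k}P_k^h\rightarrow P_i^h\bigr)\oplus\bigoplus_{j\neq i}P_j^h$, with $\End_{A_h}(T)^{\rm op}\simeq A_{\sigma_i^-h}$. Full faithfulness of $R\Phi_h$ then gives a natural isomorphism $G\simeq R\Hom_{A_h}(T,-)$, and the classical Auslander--Platzeck--Reiten identification of the reflection functor with the tilting functor of $T$ finishes the proof. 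This route produces the required natural transformation directly instead of trying to reconstruct it from values on simples.
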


As discussed in the comments before Lemma~\ref{simpext}, applying the inverse equivalence $R\Phi_{h}^{-1}$ to the standard heart of $D^{b}(Q_{h})$ gives a non-standard heart ${\mathcal A}_{h} \subset  D^{b}_{\widetilde{G}}(\mathbb{P}^{1})$ of finite length with simples $E_{i}^{h}$ and indecomposable projectives $F_{i}^{h}$. 

To relate the various hearts ${\mathcal A}_{h}$ we  use the following proposition from \cite{tstruc}.

%SIMPLE TILTING

\begin{propo} Let ${\mathcal A} \subset \D$ be a finite length heart of a bounded $t$-structure for $\D$ and let $S \in {\mathcal A}$ be a simple object,
Set $\langle S \rangle^{\perp}=\{\F \in \mathcal A \;| \; \Hom_{\mathcal A}(S, \F)=0 \}$ and $^{\perp}\!\langle S \rangle=\{\F \in \mathcal A \;| \; \Hom_{\mathcal A}(\F,S)=0 \}$. Then the full subcategories
$$L_{S}{\mathcal A}=\{\F \in {\mathcal D} \; | \; H^{i}(\F) = 0 \mbox{ for } i \neq 0,1, H^{0} \in \langle S \rangle^{\perp} \mbox{ and } H^{1}(\F) \in  \langle S \rangle \}$$
and
$$R_{S}{\mathcal A}=\{\F \in {\mathcal D} \; | \; H^{i}(\F) = 0 \mbox{ for } i \neq -1,0, H^{-1} \in \langle S \rangle \mbox{ and } H^{0}(\F) \in  ^{\perp}\!\!\langle S \rangle \}$$
are hearts of bounded $t$-structures on $\D$. $L_{S}{\mathcal A}$ is called the {\it left tilt} at $S$, $R_{S}{\mathcal A}$ the {\it right tilt}.
\end{propo}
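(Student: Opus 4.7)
The plan is to realize both $L_S\mathcal{A}$ and $R_S\mathcal{A}$ as instances of the Happel--Reiten--Smal\o\ tilting construction applied to two naturally occurring torsion pairs in $\mathcal{A}$ generated by the simple object $S$.

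First I would produce the torsion pairs. Let $\langle S\rangle\subset\mathcal{A}$ denote the full subcategory of objects whose Jordan--H\"older factors are all isomorphic to $S$. Because $S$ is simple and $\mathcal{A}$ is of finite length, $\langle S\rangle$ is a Serre subcategory, i.e.\ closed under subobjects, quotients, and extensions. From this I extract two torsion pairs:
(a) $(\langle S\rangle,\langle S\rangle^{\perp})$: for each $X\in\mathcal{A}$ the Noetherian property of the finite length heart produces a maximal subobject $T_X\subseteq X$ lying in $\langle S\rangle$, and maximality together with simplicity of $S$ forces $X/T_X\in\langle S\rangle^{\perp}$;
(b) $({}^{\perp}\langle S\rangle,\langle S\rangle)$: dually, the Artinian property produces a maximal quotient $X\twoheadrightarrow F_X$ with $F_X\in\langle S\rangle$, and the kernel automatically lies in ${}^{\perp}\langle S\rangle$, since any nonzero map from the kernel to $S$ could be spliced with $X\twoheadrightarrow F_X$ using the extension-closure of $\langle S\rangle$ to produce a strictly larger quotient of $X$ in $\langle S\rangle$, contradicting maximality.

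Next I invoke the HRS theorem: for any torsion pair $(\mathcal{T},\mathcal{F})$ in a heart of a bounded $t$-structure, the full subcategory
$$\{X\in\mathcal{D}\ :\ H^{0}(X)\in\mathcal{F},\ H^{1}(X)\in\mathcal{T},\ H^{i}(X)=0\text{ otherwise}\}$$
is again the heart of a bounded $t$-structure. Applied to torsion pair (a) this produces $L_S\mathcal{A}$ on the nose, while applied to torsion pair (b) it produces a heart which, shifted by $-1$, is exactly $R_S\mathcal{A}$; since a shift of a heart is still a heart, this gives the second assertion. Boundedness of the new $t$-structures is immediate from boundedness of the old one, since each new heart is contained in $\mathcal{A}\cup\mathcal{A}[\pm 1]$.

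The only genuinely substantive step is the HRS theorem itself. I expect this to be the main obstacle: one has to define the candidate truncation subcategories $\mathcal{D}^{\leq 0}_{\mathrm{new}}=\mathcal{D}^{\leq -1}_{\mathrm{old}}\ast\mathcal{T}$ and $\mathcal{D}^{\geq 0}_{\mathrm{new}}=\mathcal{F}\ast\mathcal{D}^{\geq 0}_{\mathrm{old}}$, verify $\Hom$-vanishing between them, and construct truncation triangles for an arbitrary $X\in\mathcal{D}$ by first truncating with the old $t$-structure and then splicing in the torsion decomposition of the resulting cohomology object. The finite-length hypothesis enters precisely to guarantee that the torsion decomposition exists in $\mathcal{A}$; everything else is formal manipulation of distinguished triangles.
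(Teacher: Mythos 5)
Your argument is correct and is essentially the standard one: the paper itself gives no proof of this proposition, quoting it from Bridgeland's \cite{tstruc}, and the proof there (and in Happel--Reiten--Smal\o) is exactly your route --- the torsion pairs $(\langle S\rangle,\langle S\rangle^{\perp})$ and $({}^{\perp}\langle S\rangle,\langle S\rangle)$ generated by the simple $S$ in the finite length heart, followed by the HRS tilt. The only blemishes are harmless off-by-one shift conventions (your candidate co-aisle should be $\mathcal{F}[1]\ast\mathcal{D}^{\geq 0}_{\mathrm{old}}$ to match the aisle $\mathcal{D}^{\leq -1}_{\mathrm{old}}\ast\mathcal{T}$, and the heart produced from torsion pair (b) is $R_{S}\mathcal{A}$ up to the shift $[1]$ rather than $[-1]$), neither of which affects the substance.
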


We can now state the relation between various hearts ${\mathcal A}_{h}$ in terms of tilting.

%TILTING ON THE PROJECTIVE LINE

\begin{propo}\label{projtilt} Denoting the left and right tilts at $E_{i}^{h} \in {\mathcal A}_{h} \subset D^{b}_{\widetilde{G}}(\mathbb{P}^{1})$ by $L_{i}$ and $R_{i}$,
we have

$$L_{i}{\mathcal A}_{h}={\mathcal A}_{\sigma_{i}^{-1}h} \;\;\;\;\mbox{and}\;\;\;\; R_{i}{\mathcal A}_{h}={\mathcal A}_{\sigma_{i}^{+}h}.$$
\end{propo}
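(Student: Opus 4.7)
The plan is to compare the simple objects of $\mathcal{A}_{\sigma_{i}^{-}h}$, as enumerated in Lemma~\ref{simpsdiffer}, with the cohomological description of the tilt $L_{i}\mathcal{A}_{h}$, and to handle the sink case symmetrically. My strategy has three steps: (i) verify that each simple $E_{j}^{\sigma_{i}^{-}h}$ lies in $L_{i}\mathcal{A}_{h}$; (ii) deduce $\mathcal{A}_{\sigma_{i}^{-}h}\subseteq L_{i}\mathcal{A}_{h}$ using that hearts are extension-closed; and (iii) promote the inclusion to equality via a standard fact about bounded $t$-structures.

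For step (i), I would go through the three cases of Lemma~\ref{simpsdiffer}. When $j=i$, the simple $E_{i}^{\sigma_{i}^{-}h}=E_{i}^{h}[-1]$ has $\mathcal{A}_{h}$-cohomology $H^{0}=0$ and $H^{1}=E_{i}^{h}\in\langle E_{i}^{h}\rangle$, as required. When $j\neq i$ and $i\nrightarrow j$, the simple is just $E_{j}^{h}$, and $\Homg(E_{i}^{h},E_{j}^{h})=0$ by Lemma~\ref{simpext}, so $E_{j}^{h}\in\langle E_{i}^{h}\rangle^{\perp}$. The delicate case is $i\rightarrow j$: here $E_{j}^{\sigma_{i}^{-}h}=\mathrm{Cone}(E_{i}^{h}[-1]\to E_{j}^{h})$, and the long exact $\mathcal{A}_{h}$-cohomology sequence of the defining triangle shows that $H^{1}=0$ while $H^{0}$ fits in a short exact sequence $0\to E_{j}^{h}\to H^{0}\to E_{i}^{h}\to 0$. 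The class of this extension in $\Extg^{1}(E_{i}^{h},E_{j}^{h})$ is precisely the morphism $E_{i}^{h}[-1]\to E_{j}^{h}$ defining the cone, which is non-zero by Lemma~\ref{simpsdiffer}. Applying $\Homg(E_{i}^{h},-)$ to this short exact sequence and using Lemma~\ref{simpext} to evaluate $\Homg(E_{i}^{h},E_{j}^{h})=0$ and $\Extg^{1}(E_{i}^{h},E_{j}^{h})\simeq\mathbb{C}$, the non-splitness forces the connecting map $\Homg(E_{i}^{h},E_{i}^{h})\to\Extg^{1}(E_{i}^{h},E_{j}^{h})$ to be an isomorphism of one-dimensional spaces, whence $\Homg(E_{i}^{h},H^{0})=0$ and $H^{0}\in\langle E_{i}^{h}\rangle^{\perp}$.

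For step (ii), since $\mathcal{A}_{\sigma_{i}^{-}h}$ is a finite-length heart, every object is an iterated extension of its simples; as these simples all lie in the heart $L_{i}\mathcal{A}_{h}$, closure under extensions yields the inclusion. For step (iii), I would invoke the standard fact that two bounded hearts $\mathcal{A}\subseteq\mathcal{B}$ in the same triangulated category must coincide: given $X\in\mathcal{B}$, its $\mathcal{A}$-cohomologies lie in $\mathcal{A}\subseteq\mathcal{B}$, and testing the natural maps $H^{k}_{\mathcal{A}}(X)[-k]\to X$ for the smallest non-zero $k$ and $X\to H^{k}_{\mathcal{A}}(X)[-k]$ for the largest against the vanishing axioms of the $\mathcal{B}$ $t$-structure forces all $\mathcal{A}$-cohomologies in degrees $k\neq 0$ to vanish. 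The statement for $R_{i}\mathcal{A}_{h}=\mathcal{A}_{\sigma_{i}^{+}h}$ is completely analogous: one uses the second part of Lemma~\ref{simpsdiffer} and applies $\Homg(-,E_{i}^{h})$ to the analogous short exact sequence $0\to E_{i}^{h}\to H^{0}\to E_{j}^{h}\to 0$. The main obstacle is the non-splitness check in the cone case, which depends crucially on identifying the cone-defining morphism in Lemma~\ref{simpsdiffer} with the essentially unique non-zero element of $\Extg^{1}(E_{i}^{h},E_{j}^{h})$; everything else is bookkeeping with long exact sequences.
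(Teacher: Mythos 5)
Your proof is correct, but it takes a genuinely different route from the paper's. The paper proves the statement on the quiver side: it shows $L\sigma_{i}^{-}(\mathcal{C}_{\sigma_{i}^{+}h})=R_{i}\mathcal{C}_{h}$ for the standard hearts $\mathcal{C}_{h}\subset D^{b}(Q_{h})$ by explicitly computing $\sigma_{i}^{-}(S_{j}^{\sigma_{i}^{+}h})$ in each case (it is $S_{j}^{h}$, a two-dimensional indecomposable $W$, or $0$), controlling $H^{-1}$ via a projective resolution, and then transports the result to $D^{b}_{\widetilde{G}}(\mathbb{P}^{1})$ using the commutativity $R\Phi_{h}^{-1}\circ L\sigma_{i}^{-}\simeq R\Phi_{\sigma_{i}^{+}h}^{-1}$ from Theorem~\ref{reflect}. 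You instead work intrinsically in $D^{b}_{\widetilde{G}}(\mathbb{P}^{1})$, feeding the explicit simples of Lemma~\ref{simpsdiffer} into the cohomological definition of the tilt and using Lemma~\ref{simpext} to run the long exact sequences; the only nontrivial point, which you handle correctly, is that in the case $i\rightarrow j$ the non-splitness of $0\to E_{j}^{h}\to H^{0}\to E_{i}^{h}\to 0$ forces the connecting map $\Homg(E_{i}^{h},E_{i}^{h})\to\Extg^{1}(E_{i}^{h},E_{j}^{h})$ to be an isomorphism, killing $\Homg(E_{i}^{h},H^{0})$. Both arguments share the same skeleton (nested bounded hearts coincide; finite length reduces the inclusion to a check on simples), but yours bypasses the reflection functors and Theorem~\ref{reflect} entirely, at the price of leaning on Lemma~\ref{simpsdiffer}; it is in fact the same style of argument the paper uses later for Proposition~\ref{twisttilt}. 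What the paper's route buys is the explicit identification of tilting with the Bernstein--Gelfand--Ponomarev functors, which is the conceptual content the proposition is meant to record; what yours buys is self-containedness within the geometric category. (One caveat inherited from the paper's lemmas rather than introduced by you: in the $\widetilde{A}_{1}$ case with a double edge, $\Extg^{1}(E_{i}^{h},E_{j}^{h})$ is two-dimensional and the ``isomorphism of one-dimensional spaces'' step needs adjustment, but the paper's own statements of Lemmas~\ref{simpext} and~\ref{simpsdiffer} gloss over this case as well.)
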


\begin{proof}
This follows essentially from the well-known relation between the reflection functors and tilting (in fact tilting was invented to generalize the reflection functors). Letting ${\mathcal C}_{h} \subset D^{b}(Q_{h})$ denote the standard heart, the relation is that $L\sigma_{i}^{-}({\mathcal C}_{\sigma_{i}^{+}h})=R_{i}{\mathcal C}_{h}$ and $R\sigma_{i}^{+}({\mathcal C}_{\sigma_{i}^{-}h})=L_{i}{\mathcal C}_{h}$. We check the first and the second is similar.

Since both  $L\sigma_{i}^{-}({\mathcal C}_{\sigma_{i}^{+}h})$ and $R_{i}{\mathcal C}_{h}$ form hearts of bounded $t$-structures for $D^{b}(Q_{h})$ and nested hearts are in fact equal, it is enough to see that $L\sigma_{i}^{-}({\mathcal C}_{\sigma_{i}^{+}h}) \subseteq R_{i}{\mathcal C}_{h}$. In fact, $L\sigma_{i}^{-}({\mathcal C}_{\sigma_{i}^{+}h})$ is finite length and is the smallest extension closed subcategory containing
its simples, so it is enough to check that $L\sigma_{i}^{-}(S_{j}^{\sigma_{i}^{+}h}) \in  R_{i}{\mathcal C}_{h}$ for every simple $S_{j}^{\sigma_{i}^{+}h} \in {\mathcal C}_{\sigma_{i}^{+}h}$.

By the definition of right tilting, we then must see that $H^{0}(L\sigma_{i}^{-}(S_{j}^{\sigma_{i}^{+}h}))=\sigma_{i}^{-}(S_{j}^{\sigma_{i}^{+}h}) \in {}^{\perp}\langle S_{i}^{h}\rangle$ and $H^{-1}(L\sigma_{i}^{-}(S_{j}^{\sigma_{i}^{+}h})) \in \langle S_{i}^{h} \rangle$. 
First note that
\begin{displaymath}
\sigma_{i}^{-}(S_{j}^{\sigma_{i}^{+}h})=\left \{ \begin{array}{ll} 
S_{j}^{h} & {\rm if} \; i \nrightarrow j \\ 
W & {\rm if} \; i \rightarrow j \\
0  & {\rm if} \; i=j
\end{array} \right.
\end{displaymath}  
where $W$ is the quiver representation with $\mathbb{C}$ at $i$ and $j$ and an isomorphism for the arrow joining them. Thus in all cases
$H^{0}(L\sigma_{i}^{-}(S_{j}^{\sigma_{i}^{+}h}))=\sigma_{i}^{-}(S_{j}^{\sigma_{i}^{+}h}) \in {}^{\perp}\langle S_{i}^{h}\rangle$.

For $H^{-1}$, consider 
a projective resolution $0 \rightarrow P^{-1} \rightarrow P^{0} \rightarrow S_{j}^{\sigma_{i}^{+}h} \rightarrow 0$. After applying the functor, the map $\sigma_{i}^{-}P^{-1} \rightarrow \sigma_{i}^{-}P^{0}$ is still injective, except possibly at $i$, so $H^{-1}(L\sigma_{i}^{-}(S_{j}^{\sigma_{i}^{+}h})) \in \langle S_{i}^{h} \rangle$.

Since tilting commutes with equivalences and we have $R\Phi_{h}^{-1} \circ L\sigma_{i}^{-} \simeq R\Phi_{\sigma_{i}^{+}h}^{-1}$ and $R\Phi_{h}^{-1} \circ R\sigma_{i}^{+} \simeq R\Phi_{\sigma_{i}^{-}h}^{-1}$,
the proposition follows.
\end{proof}

%SPHERICAL TWISTS

\subsection{Spherical twists}\label{sectsphericaltwists}

In the category $D^{b}_{\widetilde{G}}(\T)$ the role of the objects $E_{i}^{h} \in D^{b}_{\widetilde{G}}(\mathbb{P}^{1})$ is played by the spherical objects $\E_{i}^{h}=s_{*}E_{i}^{h}$.
In fact, for this section it is more convenient to think of the $\E_{i}^{h}$ as objects in the full triangulated subcategory $$\D \subset D^{b}_{\widetilde{G}}(\T)$$ consisting of objects (set-theoretically) supported along the zero-section. 

There are two main advantages of working with $\D$. First, it is $2$-CY, since the canonical bundle is trivial on $\T$ and the condition of support along the zero-section ensures the category is $\Hom$-finite. Second, $\D$ is 
the smallest triangulated subcategory of $D^{b}_{\widetilde{G}}(\T)$ containing the objects $\E_{i}^{h}$, $i \in I$, since any object $\F \in \D$ is an extension of its cohomology objects, each of which is set-theoretically supported on the zero-section and so has a natural filtration for which the associated graded pieces are pushed-forward from objects in $D^{b}_{\tilde{G}}(\mathbb{P}^{1})$, which are in turn extensions of the $E_{i}^{h}$, $i \in I$. 

By Proposition~\ref{spher}, the spherical objects $\E_{i}^{h}$ form a $\Gamma$-configuration. The spherical twists $T_{\E_{i}^{h}}$ and generate
an action of the braid group $B_{\Gamma}$ on $\D$ \cite{huy}.

We saw above that if we identify all of the Grothendieck groups $K_{0}(Q_{h})$ for different $h$ with the affine root lattice associated to $\Gamma$, then the reflection functors induce the action of the Weyl group. Now we want to see that $K_{0}(\D)$ can be identified with the affine root lattice and that the action of the braid group $B_{\Gamma}$ induces that of the Weyl group.

%\begin{lemma}
%Let $X$ be a Noetherian scheme, $\F$ a coherent sheaf, and ${\mathcal I}=Ann(\F)$. Then there is some minimal $m$ such that
%$\sqrt{{\mathcal I}}^m \subseteq {\mathcal I}$. Letting $\F^i=\sqrt{{\mathcal I}}^i \cdot \F$, we thus have a canonical filtration of $\F$
%$$0 = \F^m \subseteq \cdots \subseteq \F^1 \subseteq \F^0=\F$$
%in which the graded pieces $\F^i/\F^{i+1}$ have scheme-theoretic support in $Z={\mathcal Z}(\sqrt{\mathcal I})$.
%\end{lemma}

%BASES FOR GROTHENDIECK GROUP OF COTAN

\begin{propo}\label{groth}
The classes of the $\F_{i}^{h}$ and $\E_{j}^{h}$ form dual bases with respect to the natural pairing $\langle \; , \; \rangle: K_{0}(D^{b}_{\widetilde{G}}(\T)) \otimes K_{0}(\D) \rightarrow \mathbb{Z}$, where $\langle \E, \F \rangle = \sum_{k} (-1)^{k} {\rm dim} \, \Extg^{k}(\E,\F)$. 
\end{propo}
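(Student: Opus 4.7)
The plan is to compute the pairing $\langle \F_{i}^{h}, \E_{j}^{h}\rangle$ directly, and then deduce the basis statement from this duality together with the spanning results already established.

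First I would compute the individual $\Ext$ groups using the adjunction $Ls^{*} \dashv s_{*}$ together with the identity $Ls^{*}\pi^{*} \simeq \mathrm{id}$, which holds because $\pi \circ s = \mathrm{id}_{\mathbb{P}^{1}}$ and $\pi^{*}F_{i}^{h}$ is locally free. This gives
$$\Extg^{k}(\F_{i}^{h}, \E_{j}^{h}) = \Extg^{k}(\pi^{*}F_{i}^{h}, s_{*}E_{j}^{h}) \simeq \Extg^{k}(Ls^{*}\pi^{*}F_{i}^{h}, E_{j}^{h}) \simeq \Extg^{k}(F_{i}^{h}, E_{j}^{h}),$$
and by Remark~\ref{character} the right-hand side is one-dimensional concentrated in degree zero when $i = j$ and vanishes otherwise. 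Summing with signs yields $\langle \F_{i}^{h}, \E_{j}^{h}\rangle = \delta_{ij}$; this is essentially the same calculation already carried out in the proof of Proposition~\ref{cokoszul}.

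Next I would verify the two spanning statements. The equivalence $R\Psi_{h}$ of Theorem~\ref{cotan} sends $\F_{i}^{h}$ to the indecomposable projective $B_{h}e_{i}$, and since $B_{h} \simeq \Pi_{\Gamma}$ is Koszul by Proposition~\ref{cokoszul}, it has finite global dimension (Theorem~\ref{facts}, part 3); thus every object of $D^{b}(B_{h})$ admits a finite projective resolution, so the classes $[\F_{i}^{h}]$ generate $K_{0}(D^{b}_{\widetilde{G}}(\T))$. On the other hand, the classes $[\E_{j}^{h}]$ generate $K_{0}(\D)$ because $\D$ is, as noted in the discussion preceding the proposition, the smallest triangulated subcategory of $D^{b}_{\widetilde{G}}(\T)$ containing the objects $\E_{i}^{h}$.

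Finally, the duality $\langle \F_{i}^{h}, \E_{j}^{h}\rangle = \delta_{ij}$ forces linear independence on both sides: a relation $\sum a_{i}[\F_{i}^{h}] = 0$ would, upon pairing with $[\E_{j}^{h}]$, give $a_{j} = 0$, and symmetrically for the $[\E_{j}^{h}]$. Combined with the spanning statements this yields dual bases. The step I expect to require the most care is the generation claim for $[\F_{i}^{h}]$, which really depends on the finite global dimension of $\Pi_{\Gamma}$ (and hence on the Koszulity established in Proposition~\ref{cokoszul}); once that is in hand, the remainder of the argument is formal.
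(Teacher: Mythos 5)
Your computation of the pairing and the deduction of linear independence from it coincide with the paper's argument, and your appeal to the fact that $\D$ is generated as a triangulated category by the $\E_{i}^{h}$ is exactly the content the paper re-derives (via the filtration by powers of $\sqrt{\mathcal{I}}$ for $\mathcal{I}={\rm Ann}(\G)$) in order to get spanning of $K_{0}(\D)$. The one step where you genuinely diverge is the spanning of $K_{0}(D^{b}_{\widetilde{G}}(\T))$ by the $[\F_{i}^{h}]$: the paper invokes the Thom isomorphism $\pi^{*}: K_{0}(D^{b}_{\widetilde{G}}(\mathbb{P}^{1})) \simeq K_{0}(D^{b}_{\widetilde{G}}(\T))$ from Chriss--Ginzburg together with the previously established fact that the $F_{i}^{h}$ form a basis upstairs, whereas you argue algebraically through $R\Psi_{h}$ and the finite global dimension of $B_{h}\simeq \Pi_{\Gamma}$. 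Your route is viable and has the merit of staying inside the module-theoretic picture, but be aware that finite global dimension only yields finite resolutions by finitely generated projectives; you still need to know that the class of an arbitrary finitely generated projective $B_{h}$-module lies in the span of the $[B_{h}e_{i}]$. For a finite-dimensional algebra this is immediate from Krull--Schmidt, but $\Pi_{\Gamma}$ of affine type is infinite-dimensional, so you should add a line to the effect that, since $B_{h}$ is non-negatively graded with semisimple degree-zero part and $\bigcap_{k}B_{\geq k}=0$, any idempotent matrix with entries in $B_{\geq 1}$ vanishes, whence every finitely generated projective is a direct sum of the $B_{h}e_{i}$. With that supplement your argument is complete; the paper's geometric citation simply sidesteps this point.
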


\begin{proof}
First, recall that the pullback $\pi^{*}$ gives an isomorphism $K_{0}(D^{b}_{\widetilde{G}}(\mathbb{P}^1)) \simeq K_{0}(D^{b}_{\widetilde{G}}(\T))$ (see \cite[Theorem 5.4.17]{chriss}). Since the $F_{i}^{h}$ form a basis for the former, the $\F_{i}^{h}=\pi^{*}F_{i}^{h}$ form a basis for the latter. Since $R\Homg(\pi^{*}F_{i}^{h}, s_{*}E_{j}^{h}) \simeq R\Homg(s^{*}\pi^{*}F_{i}^{h}, E_{j}^{h}) \simeq
R\Homg(F_{i}^{h}, E_{j}^{h})$, the duality between
$\F_{i}^{h}$ and $\E_{j}^{h}$ follows from that between $F_{i}^{h}$ and $E_{j}^{h}$ discussed in Remark~\ref{character}. Then the linear independence of the $\E_{j}^{h}$ follows from the duality between the $\F_{i}^{h}$ and the $\E_{j}^{h}$.

It remains to show that the $\E_{j}^{h}$ span $K_{0}(\D)$. For this, consider an object $\F \in \D$. Its class in $K_{0}(\D)$ may be
written as $[\F]=\sum_{k}(-1)^{k}[{\mathcal H}^{k}(\F)]$, where the ${\mathcal H}^{k}(\F)$ are the cohomology sheaves of $\F$. Although the support
of $\G={\mathcal H}^{k}(\F)$ may be non-reduced and so $\G$ might not be the push-forward of an object from $\mathbb{P}^1$, there is a natural
filtration $0=\G^{m} \subseteq \dots \subseteq \G^{1} \subseteq \G$ whose associated graded pieces have reduced support along $Z$, where $\G^{k}=\sqrt{{\mathcal I}}^{k}\cdot \G$ for ${\mathcal I}={\rm Ann}(\G)$. Thus the class of $\G$ and hence the class of $\F$ can be written as a combination of
classes pushed-forward from $\mathbb{P}^1$. By the comments before Lemma~\ref{simpext}, classes on $\mathbb{P}^1$
are combinations of the classes $E_{i}^{h}$, and so $K_{0}(\D)$ is indeed spanned by the $\E_{i}^{h}$.
\end{proof}

%AFFINE ROOT LATTICE FOR COTAN

\begin{propo}\label{cartan}
In the basis $\E_{i}^{h}$, the Euler form on $K_{0}(\D)$ is given by the Cartan matrix of $\Gamma$, so $K_{0}(\D)$ is an affine root lattice with the $\E_{i}^{h}$ as a base of simple roots. Moreover, the twists $T_{\E_{i}^{h}}$ induce the corresponding simple reflections.
\end{propo}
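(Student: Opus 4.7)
The plan is to verify the two assertions separately, both of which reduce to direct bookkeeping from facts already established.

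For the Cartan matrix claim, I would feed Proposition~\ref{spher} directly into the Euler form. That proposition shows the collection $\{\E_{i}^{h}\}$ is a $\Gamma$-configuration of spherical objects, and Proposition~\ref{groth} shows their classes form a $\mathbb{Z}$-basis of $K_{0}(\D)$. Since $\D$ is $2$-Calabi--Yau, Serre duality $\Extg^{k}(\E,\F)\simeq \Extg^{2-k}(\F,\E)^{*}$ symmetrizes the Euler pairing $\chi$. From sphericity we compute $\chi(\E_{i}^{h},\E_{i}^{h})=1-0+1=2$; for $i\neq j$, the $\Gamma$-configuration property gives $\chi(\E_{i}^{h},\E_{j}^{h})=0-n_{ij}+0=-n_{ij}$, where $n_{ij}$ is the number of edges between $i$ and $j$ in $\Gamma$. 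This matrix is precisely the Cartan matrix of the affine Dynkin diagram $\Gamma$, so the identification of $K_{0}(\D)$ with the affine root lattice having the $[\E_{i}^{h}]$ as a base of simple roots is immediate.

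For the second claim, I would invoke the defining exact triangle of the spherical twist $T_{E}$ at a spherical object $E$,
$$R\Homg(E,\F)\otimes^{L} E \longrightarrow \F \longrightarrow T_{E}\F,$$
which on Grothendieck groups yields $[T_{E}\F]=[\F]-\chi(E,\F)[E]$. Specializing to $E=\E_{i}^{h}$ and using the Euler-form computation above,
$$[T_{\E_{i}^{h}}\F]=[\F]-\chi(\E_{i}^{h},\F)[\E_{i}^{h}]=[\F]-(\alpha_{i},[\F])\,\alpha_{i},$$
where $\alpha_{i}:=[\E_{i}^{h}]$ and $(\,,\,)$ denotes the symmetric Cartan pairing. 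This is precisely the formula for the simple reflection $s_{i}$ acting on the affine root lattice, so extending by linearity $T_{\E_{i}^{h}}$ induces $s_{i}$ on $K_{0}(\D)$.

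There is no substantial obstacle: the argument combines sphericity, the $2$-Calabi--Yau Serre duality (which is what permits identifying the Euler form with the symmetric bilinear form on the root lattice), and the standard formula for the $K$-theoretic action of a spherical twist. The only mild point requiring attention is to confirm, using the symmetry provided by $2$-CY, that the off-diagonal entries $-n_{ij}$ of the Euler form genuinely match the Cartan matrix without further normalization, which holds uniformly for all affine $ADE$ types $\Gamma$ (including $\tilde{A}_{1}$, where $n_{01}=2$).
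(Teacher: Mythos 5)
Your proof is correct and follows essentially the same route as the paper: compute the Euler form from the $\Gamma$-configuration/sphericity data (the paper cites Lemmas~\ref{simpext} and \ref{push} directly where you cite Proposition~\ref{spher}, which packages the same information) to get the Cartan matrix, and then read off the simple reflection from the class identity $[T_{\E_{i}^{h}}\F]=[\F]-\langle \E_{i}^{h},\F\rangle[\E_{i}^{h}]$ coming from the defining triangle of the twist. Your extra remarks on the $2$-CY symmetry of the pairing and the $\tilde{A}_{1}$ case are harmless refinements of the same argument.
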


\begin{proof}
For the first claim, simply note that by Lemmas~\ref{simpext} and \ref{push} $\langle \E_{i}^{h}, \E_{i}^{h} \rangle = {\rm dim} \, \Homg^{k}(\E_{i}^{h},\E_{i}^{h})+ {\rm dim}\, \Extg^{2}(\E_{i}^{h},\E_{i}^{h})=2$ 
and for $i\neq j$
$\langle \E_{i}^{h}, \E_{j}^{h} \rangle = - {\rm dim} \, \Extg^{1}(\E_{i}^{h},\E_{j}^{h}) - {\rm dim} \, \Extg^{1}(\E_{j}^{h},\E_{i}^{h})^{*}= - n_{ij}$.
The second claim then follows from the expression $[T_{\E_{i}^{h}}(\E_{j}^{h})]=[\E_{j}^{h}]-\langle \E_{i}^{h}, \E_{j}^{h} \rangle [\E_{i}^{h}]$.

\end{proof}

\begin{rem}
It can be shown that while the push-forward along the zero-section gives an isomorphism $K_{0}(D^{b}_{\widetilde{G}}(\mathbb{P}^1))\simeq K_{0}(\D)$, the push-forward map $\sigma_{*}: K_{0}(D^{b}_{\widetilde{G}}(\mathbb{P}^1)) \rightarrow K_{0}(D^{b}_{\widetilde{G}}(\T))$ has kernel consisting of imaginary roots, and so the image can be thought of as a root lattice of finite type.
\end{rem}

Under the equivalence $R\Psi_{h}: D^{b}_{\widetilde{G}}(\T) \simeq D^{b}(\Pi_{\Gamma})$, the subcategory $\D$ is identified with the smallest triangulated subcategory $\D_{{\rm nil}} \subset D^{b}(\Pi_{\Gamma})$ containing the simple objects $S_{i}$. I claim that
$\D_{{\rm nil}}$ consists of complexes with nilpotent cohomology, where a $\Pi_{\Gamma}$-module $M$ is called nilpotent if there is a degree $k$ for which $\Pi_{\Gamma}^{k} \cdot M=0$. To see this, note that any nilpotent module has a filtration $$0 \subset {\Pi_{\Gamma}}^{k}\cdot M \subset {\Pi_{\Gamma}}^{k-1}\cdot M \subset \dots \subset {\Pi_{\Gamma}}^{1}\cdot M \subset M$$
whose associated graded pieces are annihilated by ${\Pi_{\Gamma}}^{1}$ and hence are sums of the simple modules $S_{i}$. It follows that the restriction of the standard heart of $D^{b}(\Pi_{\Gamma})$ to $D_{\rm{nil}}$ is of finite length. 

Under the inverse equivalence $R\Psi_{h}^{-1}$, the standard $t$-structure on $D^{b}(B_{h})$ is sent to a non-standard $t$-structure on $D^{b}_{\widetilde{G}}(\T)$, which we may restrict to $\D \subset  D^{b}_{\widetilde{G}}(\T)$. The resulting heart, which we denote ${\mathcal B}_{h} \subset \D$, is then of finite length with simple objects $\E_{i}^{h}$, $i \in I$.

\medskip

Our next result shows that the spherical twists not only realize the action of the Weyl group on the affine root lattice but also relate the various hearts ${\mathcal B}_{h} \subset \D$.

%TILTING FOR COTAN

\begin{theorem}\label{twist}
If $i \in Q_{h}$
is a source, then $T_{\E_{i}^{h}}(\E_{j}^{h})\simeq \E_{j}^{\sigma_{i}^{-}h}$. Likewise, if $i$ is a sink, then $T_{\E_{i}^{h}}^{-1}(\E_{j}^{h})\simeq \E_{j}^{\sigma_{i}^{+}h}$.
In particular, since the hearts are finite length and hence determined by their simples, $T_{\E_{i}^{h}}({\mathcal B}_{h})={\mathcal B}_{\sigma_{i}^{-}h}$ for $i$ a source and $T_{\E_{i}^{h}}^{-1}({\mathcal B}_{h})={\mathcal B}_{\sigma_{i}^{+}h}$ for $i$ a sink.
\end{theorem}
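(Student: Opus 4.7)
The plan is to prove the first assertion (for $i$ a source in $Q_{h}$) by a direct computation of $T_{\E_{i}^{h}}(\E_{j}^{h})$ from the defining triangle
\[
\Homg^{\bullet}(\E_{i}^{h}, \E_{j}^{h}) \otimes \E_{i}^{h} \longrightarrow \E_{j}^{h} \longrightarrow T_{\E_{i}^{h}}(\E_{j}^{h}),
\]
and then comparing, case-by-case on the position of $j$ relative to $i$ in $Q_{h}$, with $\E_{j}^{\sigma_{i}^{-}h} = s_{*} E_{j}^{\sigma_{i}^{-}h}$. Since $s$ is a closed immersion, $s_{*}$ is exact, so applying it to Lemma~\ref{simpsdiffer} gives $\E_{j}^{\sigma_{i}^{-}h}$ as $\E_{i}^{h}[-1]$, $\E_{j}^{h}$, or $\mathrm{Cone}(\E_{i}^{h}[-1] \to \E_{j}^{h})$ in the three cases $j=i$; $j \neq i$ and $i \nrightarrow j$; and $i \to j$, respectively.

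By Proposition~\ref{spher} together with Lemma~\ref{simpext}, the Hom-complex $\Homg^{\bullet}(\E_{i}^{h}, \E_{j}^{h})$ is $H^{\bullet}(S^{2},\mathbb{C})$ when $j=i$ (sphericity); it vanishes when $j$ is not adjacent to $i$; and it is one-dimensional, concentrated in degree $1$, when $i \to j$ (the $\Hom$ and $\Ext^{2}$ pieces vanish, the latter because no arrow runs $j \to i$ as $i$ is a source). Feeding these into the twist triangle, the $j=i$ case reduces to the standard identity $T_{\E}(\E) \simeq \E[1-n]$ for an $n$-spherical object, giving $\E_{i}^{h}[-1]$; the non-adjacent case yields $T_{\E_{i}^{h}}(\E_{j}^{h}) \simeq \E_{j}^{h}$ immediately; and in the case $i \to j$, the triangle reads $\E_{i}^{h}[-1] \to \E_{j}^{h} \to T_{\E_{i}^{h}}(\E_{j}^{h})$, realizing the twist as the cone of the evaluation map. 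This cone agrees with the one in Lemma~\ref{simpsdiffer}, because both maps generate the $1$-dimensional space $\Homg^{1}(\E_{i}^{h}, \E_{j}^{h})$ and the mapping cone only depends on the morphism up to a nonzero scalar.

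For the sink case, rather than redoing the analysis for the inverse twist, I would reduce to the source case by shifting the height. Set $\tilde{h} = \sigma_{i}^{+} h$; then $i$ is a source in $\tilde{h}$ and $\sigma_{i}^{-} \tilde{h} = h$. By $s_{*}$ applied to the first clause of Lemma~\ref{simpsdiffer}, $\E_{i}^{\tilde{h}} \simeq \E_{i}^{h}[1]$, and since spherical twists are insensitive to shifts of their defining object, $T_{\E_{i}^{\tilde{h}}} = T_{\E_{i}^{h}}$. The source case applied to $\tilde{h}$ then gives $T_{\E_{i}^{h}}(\E_{j}^{\tilde{h}}) = T_{\E_{i}^{\tilde{h}}}(\E_{j}^{\tilde{h}}) \simeq \E_{j}^{\sigma_{i}^{-}\tilde{h}} = \E_{j}^{h}$, and rearranging yields $T_{\E_{i}^{h}}^{-1}(\E_{j}^{h}) \simeq \E_{j}^{\tilde{h}} = \E_{j}^{\sigma_{i}^{+}h}$. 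Once the simples have been matched, the statements about hearts follow at once: $T_{\E_{i}^{h}}$ is an autoequivalence of $\D$, so $T_{\E_{i}^{h}}({\mathcal B}_{h})$ is again a heart of finite length in $\D$, and a finite length heart is determined by its collection of simple objects.

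The only delicate point is the case $i \to j$ in the source analysis, where one must check that the evaluation map $\E_{i}^{h}[-1] \to \E_{j}^{h}$ in the twist triangle is actually nonzero, so that it agrees up to scalar with the map from Lemma~\ref{simpsdiffer}. This is tautological, however, because the evaluation on the canonical generator of $\Homg^{1}(\E_{i}^{h}, \E_{j}^{h})$ is precisely that generator; all the remaining content is careful bookkeeping in the three cases.
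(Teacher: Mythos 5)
Your proposal is correct and follows essentially the same route as the paper: the same defining triangle for the twist, the same three-case analysis on the position of $j$ relative to the source $i$, and the same identification with $\E_{j}^{\sigma_{i}^{-}h}$ via exactness of $s_{*}$ applied to Lemma~\ref{simpsdiffer} (you are in fact slightly more careful than the paper in noting that the cone only depends on the generator of the one-dimensional $\Extg^{1}$ up to scalar). The only divergence is cosmetic: where the paper dismisses the sink case as ``similar,'' you reduce it to the source case via $\tilde{h}=\sigma_{i}^{+}h$ and the shift-invariance $T_{\E_{i}^{h}[1]}=T_{\E_{i}^{h}}$, which is a clean and valid shortcut.
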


\begin{proof}
We prove $T_{\E_{i}^{h}}(\E_{j}^{h})\simeq \E_{j}^{\sigma_{i}^{-}h}$ for $i$ a source. The proof of $T_{\E_{i}^{h}}^{-1}(\E_{j}^{h})\simeq \E_{j}^{\sigma_{i}^{+}h}$ is similar.

Consider the defining exact triangle $R\Homg(\E_{i}^{h},\E_{j}^{h})\otimes\E_{i}^{h} \rightarrow \E_{j}^{h} \rightarrow T_{\E_{i}^{h}}(\E_{j}^{h})$. 

If $i=j$, then $T_{\E_{i}^{h}}(\E_{i}^{h}) \simeq \E_{i}^{h}[-1] \simeq s_{*}E_{i}^{h}[-1] \simeq s_{*}E_{i}^{\sigma_{i}^{-}h} \simeq \E_{i}^{\sigma_{i}^{-}h}$ with the first isomorphism being a standard property of spherical twists
in a $2$-CY category, the second isomorphism is from exactness of $s_{*}$, the third by Lemma~\ref{simpsdiffer}, and the last by definition. 

If $i \neq j$ and $i \nrightarrow j$, then $R\Homg(\E_{i}^{h},\E_{j}^{h})=0$ because
the $\E_{k}^{h}$ form a $\Gamma$-configuration, so 
$T_{\E_{i}^{h}}(\E_{j}^{h}) \simeq \E_{j}^{h} \simeq \E_{j}^{\sigma_{i}^{-}h}$,
with the last isomorphism coming from Lemma~\ref{simpsdiffer}.

If $i \rightarrow j$, then $R\Homg(\E_{i}^{h},\E_{j}^{h}) \simeq \Extg^{1}(\E_{i}^{h},\E_{j}^{h}) \simeq \mathbb{C}$ by the properties of $\Gamma$-configurations. Thus the defining
exact triangle is of the form  $\E_{i}^{h}[-1] \rightarrow \E_{j}^{h} \rightarrow T_{\E_{i}^{h}}(\E_{j}^{h})$. But by Lemma~\ref{simpsdiffer}, $E_{j}^{\sigma_{i}^{-}h} \simeq {\rm Cone}(E_{i}^{h}[-1] \rightarrow E_{j}^{h})$, so indeed $T_{\E_{i}^{h}}(\E_{j}^{h})\simeq \E_{j}^{\sigma_{i}^{-}h}$.

\end{proof}

Note that the relation among the hearts $\mathcal{B}_{h} \subset \D$ by autoequivalences is stronger than the relation among the hearts $\mathcal{A}_{h}$ by tilting (Proposition~\ref{projtilt}). Our final result, which is well-known to experts, shows that the weaker relation of tilting is induced by the spherical twists, thus completing the analogy between the spherical twists  and the reflection functors outlined in the table from the introduction.

\begin{propo}\label{twisttilt}
Let $\D$ be a $2$-CY triangulated category with ${\mathcal B} \subset \D$ the heart of a bounded $t$-structure that is of finite length. Then twists along simple, spherical objects realize tilting at $S$:
$$T_{S}({\mathcal B})=L_{S}{\mathcal B}\;\;\;\;\mbox{and}\;\;\;\; T_{S}^{-1}({\mathcal B})=R_{S}{\mathcal B}.$$
\end{propo}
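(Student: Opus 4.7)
The plan is to reduce the statement to checking a containment on the simple objects of $\mathcal{B}$, and then to verify case by case using the defining triangle of the spherical twist. Both $T_S(\mathcal{B})$ and $L_S\mathcal{B}$ are hearts of bounded $t$-structures on $\D$ (the former because $T_S$ is an autoequivalence in the sense of Seidel--Thomas, the latter by the tilting proposition quoted above). Since nested bounded hearts in a triangulated category coincide, as used in the proof of Proposition~\ref{projtilt}, it is enough to establish one containment, say $T_S(\mathcal{B}) \subseteq L_S\mathcal{B}$. Because $\mathcal{B}$ has finite length, every object is an iterated extension of simples, and $L_S\mathcal{B}$ is closed under extensions (as a heart), so it suffices to check $T_S(X) \in L_S\mathcal{B}$ for every simple $X \in \mathcal{B}$.

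There are three cases. If $X = S$, sphericity of $S$ in a $2$-CY category gives $R\Homg(S,S) \simeq \mathbb{C} \oplus \mathbb{C}[-2]$, and the defining triangle $R\Homg(S,S)\otimes S \to S \to T_S(S)$ yields $T_S(S) \simeq S[-1]$, which lies in $L_S\mathcal{B}$ since its cohomology with respect to $\mathcal{B}$ is $S$ in degree $1$. If $X$ is a simple distinct from $S$ with $\Extg^1(S,X) = 0$, then $\Homg(S,X) = 0$ (distinct simples in a finite length heart) and $\Extg^2(S,X) \simeq \Homg(X,S)^* = 0$ by $2$-CY duality, so $R\Homg(S,X) = 0$ and $T_S(X) \simeq X \in \langle S\rangle^\perp \subset L_S\mathcal{B}$. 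Finally, if $X \neq S$ is simple with $V := \Extg^1(S,X) \neq 0$, the same vanishings give $R\Homg(S,X) \simeq V[-1]$, and the defining triangle takes the form
$$V \otimes S[-1] \longrightarrow X \longrightarrow T_S(X).$$
Passing to the long exact sequence of $\mathcal{B}$-cohomology, $H^i(T_S(X)) = 0$ for $i \neq 0$, while $H^0(T_S(X))$ sits in a short exact sequence
$$0 \longrightarrow X \longrightarrow H^0(T_S(X)) \longrightarrow V \otimes S \longrightarrow 0$$
in $\mathcal{B}$. Applying $\Homg(S,-)$ and using $\Homg(S,X) = 0$, the resulting connecting map $V\otimes \Homg(S,S) \to \Extg^1(S,X) = V$ is the canonical identification (by functoriality of $R\Homg(S,-)$ applied to the triangle above), hence an isomorphism. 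Therefore $\Homg(S, T_S(X)) = 0$, so $T_S(X) \in \langle S\rangle^\perp \subset L_S\mathcal{B}$, completing the verification on simples.

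The identity $T_S^{-1}(\mathcal{B}) = R_S\mathcal{B}$ is proved by a dual argument, using the defining triangle $T_S^{-1}(X) \to X \to R\Homg(X,S)^\vee \otimes S$ for the inverse twist and the $2$-CY duality $\Extg^i(X,S) \simeq \Extg^{2-i}(S,X)^*$ to exchange the roles played above by $\Extg^1(S,X)$ and $\Extg^1(X,S)$. The one step requiring genuine care is the identification of the connecting map in the universal extension as the canonical isomorphism $V \otimes \Homg(S,S) \to V$, but this is forced by functoriality and does not involve any computation beyond tracking the defining triangle of $T_S$.
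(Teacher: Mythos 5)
Your argument is correct and follows essentially the same route as the paper's proof: reduce to the simples via nested hearts and finite length, use $T_S(S)\simeq S[-1]$ for $S$ itself, and for $S'\neq S$ extract the short exact sequence $0\to S'\to T_S(S')\to \Ext^1(S,S')\otimes S\to 0$ and apply $\Hom(S,-)$, with the connecting map being the tautological isomorphism. Splitting off the $\Ext^1=0$ case separately and spelling out the dual argument for $T_S^{-1}$ are only cosmetic differences.
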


\begin{proof}
Since bounded $t$-structures with nested hearts are equal , it is enough to check that $T_{S}({\mathcal A}) \subseteq L_{S}({\mathcal A})$,
and since the $T_{S}({\mathcal A})$ is finite length, it is enough to check that $T_{S}(S^{\prime}) \in L_{S}({\mathcal A})$ for every simple $S^{\prime} \in {\mathcal A}$. 

When $S=S^{\prime}$, we know that $T_{S}(S)=S[-1]$
so that indeed $H^{0}(T_{S}(S))=0 \in \langle S \rangle^{\perp}$ and $H^{1}(T_{S}(S))=S \in \langle S \rangle$. Thus $T_{S}(S) \in L_{S}{\mathcal A}$.

Otherwise consider the exact triangle
$$R\Hom(S,S^{\prime})\otimes S \rightarrow S^{\prime} \rightarrow T_{S}(S^{\prime}).$$
By Schur's lemma, $\Hom(S, S^{\prime})=\Hom(S^{\prime},S)=0$, and so by Serre duality
$\Ext^{2}(S,S^{\prime})=0$. Then from the long exact sequence in cohomology we see that $H^{i}(T_{S}(S^{\prime}))=0$ for $i \neq 0$ so that $T_{S}(S^{\prime}) \simeq H^{0}(T_{S}(S^{\prime}))$. The non-zero part of the long exact sequence is thus
$$0 \rightarrow S^{\prime} \rightarrow T_{S}(S^{\prime}) \rightarrow \Ext^{1}(S, S^{\prime}) \otimes S \rightarrow 0.$$ Applying $\Hom(S, -)$ gives 
$$0 \rightarrow \Hom(S,T_{S}(S^{\prime})) \rightarrow \Ext^{1}(S, S^{\prime}) \otimes \Hom(S,S) \ \rightarrow \Ext^{1}(S, S^{\prime}) \rightarrow 0.$$ The map on the right being an isomorphism,
we have $\Hom(S,T_{S}(S^{\prime})) = 0$, whence $T_{S}(S^{\prime}) \in \langle S \rangle^{\perp}$.
\end{proof}

\bibliographystyle{plain}
\bibliography{projmckaybiblio}

\def\cprime{$'$}
\begin{thebibliography}{10}

\bibitem{ars}
Maurice Auslander, Idun Reiten, and Sverre~O. Smal{\o}.
\newblock {\em Representation theory of {A}rtin algebras}, volume~36 of {\em
  Cambridge Studies in Advanced Mathematics}.
\newblock Cambridge University Press, Cambridge, 1997.
\newblock Corrected reprint of the 1995 original.

\bibitem{beil}
A.~A. Be{\u\i}linson.
\newblock Coherent sheaves on {${\bf P}\sp{n}$} and problems in linear algebra.
\newblock {\em Funktsional. Anal. i Prilozhen.}, 12(3):68--69, 1978.

\bibitem{bgs}
Alexander Beilinson, Victor Ginzburg, and Wolfgang Soergel.
\newblock Koszul duality patterns in representation theory.
\newblock {\em J. Amer. Math. Soc.}, 9(2):473--527, 1996.

\bibitem{bgp}
I.~N. Bernstein, I.~M. Gelfand, and V.~A. Ponomarev.
\newblock Coxeter functors, and {G}abriel's theorem.
\newblock {\em Uspehi Mat. Nauk}, 28(2(170)):19--33, 1973.

\bibitem{super}
Raf Bocklandt, Travis Schedler, and Michael Wemyss.
\newblock {Superpotentials and Higher Order Derivations}, 2008.
\newblock {\url{arXiv:0802.0162}}.

\bibitem{klein}
Tom Bridgeland.
\newblock {Stability conditions and Kleinian singularities}, 2005.
\newblock {\url{arXiv:math.AG/0508257 v2}}.

\bibitem{tstruc}
Tom Bridgeland.
\newblock t-structures on some local {C}alabi-{Y}au varieties.
\newblock {\em J. Algebra}, 289(2):453--483, 2005.

\bibitem{bkr}
Tom Bridgeland, Alastair King, and Miles Reid.
\newblock The {M}c{K}ay correspondence as an equivalence of derived categories.
\newblock {\em J. Amer. Math. Soc.}, 14(3):535--554 (electronic), 2001.

\bibitem{chriss}
Neil Chriss and Victor Ginzburg.
\newblock {\em Representation theory and complex geometry}.
\newblock Birkh\"auser Boston Inc., Boston, MA, 1997.

\bibitem{invt}
Harm Derksen and Gregor Kemper.
\newblock {\em Computational invariant theory}.
\newblock Invariant Theory and Algebraic Transformation Groups, I.
  Springer-Verlag, Berlin, 2002.
\newblock Encyclopaedia of Mathematical Sciences, 130.

\bibitem{verdier}
G.~Gonzalez-Sprinberg and J.-L. Verdier.
\newblock Construction g\'eom\'etrique de la correspondance de {M}c{K}ay.
\newblock {\em Ann. Sci. \'Ecole Norm. Sup. (4)}, 16(3):409--449 (1984), 1983.

\bibitem{hrs}
Dieter Happel, Idun Reiten, and Sverre~O. Smal\o.
\newblock {\em {Tilting in abelian categories and quasitilted algebras}}.
\newblock Mem. Amer. Math. Soc., 1996.

\bibitem{catadj}
Ruth~Stella Huerfano and Mikhail Khovanov.
\newblock A category for the adjoint representation.
\newblock {\em J. Algebra}, 246(2):514--542, 2001.

\bibitem{huy}
D.~Huybrechts.
\newblock {\em Fourier-{M}ukai transforms in algebraic geometry}.
\newblock Oxford Mathematical Monographs. The Clarendon Press Oxford University
  Press, Oxford, 2006.

\bibitem{kv}
M.~Kapranov and E.~Vasserot.
\newblock Kleinian singularities, derived categories and {H}all algebras.
\newblock {\em Math. Ann.}, 316(3):565--576, 2000.

\bibitem{kir}
Alexander Kirillov, Jr.
\newblock Mc{K}ay correspondence and equivariant sheaves on {${\Bbb P}\sp 1$}.
\newblock {\em Mosc. Math. J.}, 6(3):505--529, 2006.

\bibitem{mck}
John McKay.
\newblock Graphs, singularities, and finite groups.
\newblock In {\em The Santa Cruz Conference on Finite Groups (Univ. California,
  Santa Cruz, Calif., 1979)}, volume~37 of {\em Proc. Sympos. Pure Math.},
  pages 183--186. Amer. Math. Soc., Providence, R.I., 1980.

\bibitem{morita}
Stefan Schwede.
\newblock Morita theory in abelian, derived and stable model categories.
\newblock In {\em Structured ring spectra}, volume 315 of {\em London Math.
  Soc. Lecture Note Ser.}, pages 33--86. Cambridge Univ. Press, Cambridge,
  2004.

\bibitem{st}
Paul Seidel and Richard Thomas.
\newblock Braid group actions on derived categories of coherent sheaves.
\newblock {\em Duke Math. J.}, 108(1):37--108, 2001.

\end{thebibliography}

\end{document}